\documentclass[12pt]{article}
\usepackage{amsmath,amssymb,fullpage,amsthm,bbm,booktabs,graphicx,subcaption,comment}
\usepackage[numbers]{natbib}
\usepackage[]{algorithm2e}

\newtheorem{theorem}{Theorem}[section]
\newtheorem{lemma}[theorem]{Lemma}
\newtheorem{remark}[theorem]{Remark}
\newtheorem{corollary}[theorem]{Corollary}
\newtheorem{proposition}[theorem]{Proposition}
\theoremstyle{definition}

\numberwithin{equation}{section}

\begin{document}
\begin{center}
\textbf{\large{The Bayes Lepski's Method and Credible Bands through Volume of Tubular Neighborhoods}}
\end{center}
\begin{center}
\textbf{William Weimin Yoo and Aad W. van der Vaart}\\
\textit{Leiden University}
\end{center}
\begin{abstract}
For a general class of priors based on random series basis expansion, we develop the Bayes Lepski's method to estimate unknown regression function. In this approach, the series truncation point is determined based on a stopping rule that balances the posterior mean bias and the posterior standard deviation. Equipped with this mechanism, we present a method to construct adaptive Bayesian credible bands, where this statistical task is reformulated into a problem in geometry, and the band's radius is computed based on finding the volume of certain tubular neighborhood embedded on a unit sphere. We consider two special cases involving B-splines and wavelets, and discuss some interesting consequences such as the uncertainty principle and self-similarity. Lastly, we show how to program the Bayes Lepski stopping rule on a computer, and numerical simulations in conjunction with our theoretical investigations concur that this is a promising Bayesian uncertainty quantification procedure.
\end{abstract}

\textbf{Keywords:} Bayes Lepski, volume of tube, adaptive credible bands, supremum norm posterior contraction, B-splines, CDV wavelets, uncertainty principle, self-similar.\vspace{5pt}

\textbf{MSC2010 classifications:} Primary 62G15 62G05; secondary 62G08 62C10

\section{Introduction}
Uncertainty quantification is now an important research direction in Bayesian nonparametrics, and substantial efforts have been made to answer the question whether Bayesian credible sets are indeed frequentist confidence regions, especially in infinite-dimensional settings. As Bayesian uncertainty quantification can be conducted automatically once the posterior distribution is derived from our modeling assumptions, a positive answer to this question will give statisticians a powerful alternative to other commonly used procedures such as the bootstrap.

In this paper, we will study the issue of constructing adaptive Bayesian simultaneous credible bands that will have both high levels of credibility and also coverage probability in the frequentist sense. To begin our investigation, let us consider the nonparametric regression model
\begin{align}\label{eq:model}
Y_i=f(X_i)+\varepsilon_i,\qquad i=1,\dotsc,n,
\end{align}
where $Y_i$ is the response variable, $X_i$ is the covariate, and the errors $\varepsilon_1,\dotsc,\varepsilon_n$ are independent and identically distributed (i.i.d.)~as $\mathrm{N}(0,\sigma^2)$ with unknown $0<\sigma<\infty$. Each $X_i$ takes values in some rectangular region in $\mathbb{R}$, which is assumed to be $[0,1]$ without loss of generality.

Credible bands are constructed based on the supremum norm distance. They are easily visualized and they allow one to immediately see the extent of uncertainty as delimited by the bands. It is for this reason that credible bands are more statistically meaningful than for example $L_2$-type credible sets. Several methods have been proposed in the Bayesian literature. We have the fixed radius $L_\infty$-ball around the posterior mean through B-splines random series prior (\citep{yoo2016}) and scaled Brownian motion prior (Chapter 3 of \citep{suzanne}), the technique of ``gluing" pointwise intervals for linear functionals to produce bands (\citep{castilloBVM2013,botondband}) and the intersection of two balls through the multiscale approach (\citep{kolyan,castilloBVM2014}).

Estimation and uncertainty quantification for nonparametric models, such as the regression problem considered in this paper, depend crucially on the choice of the tuning parameter in any proposed statistical procedure. Let us work with random series priors by projecting $f$ onto a general class of $J$-dimensional basis functions and eliciting normal priors on the coefficients. We know from minimax theory that the optimal $J$ when loss is measured using the $L_\infty$-distance is different from the case of $L_2$-loss. Specifically, if the true regression function $f_0$ is $\alpha$-H\"{o}lder smooth, then $J$ must be of the order $(n/\log{n})^{1/(2\alpha+1)}$ for $L_\infty$-norm and $n^{1/(2\alpha+1)}$ for $L_2$-norm. If we assign $J$ a prior or estimate it using empirical Bayes, the resulting posterior will concentrate around the $L_2$-optimal $J$, due to $L_2$-norm being the intrinsic metric for the likelihood function. Therefore for inference tasks based on the $L_\infty$-distance such as constructing Bayesian credible bands, the usual Bayesian procedures will not ``choose" the appropriate $L_\infty$-norm $J$ and this discrepancy causes the credible bands to have inadequate coverage probabilities.

To circumvent this problem, we propose a procedure to choose the correct $L_\infty$-norm $J$ by using only summaries of the posterior distribution such as its mean and variance. The dynamics of bias-variance tradeoff dictates that the tuning parameter $J$ is chosen to balance the supremum norm bias of the posterior mean and the posterior standard deviation. Based on this observation, we design a Lepski's type Bayesian stopping rule by successively reducing the value of $J$ until the point when the posterior mean bias starts to dominate the posterior standard deviation. If we adopt the framework that all Bayesian estimation and inference must be done through the posterior distribution, then this procedure is admissible within this framework since we are using posterior quantities to decide when to stop. Despite the intuitive nature of Lepski's type procedure, it has since its inception (\citep{lepski,lepski1997}) acquired the reputation of being impractical to use in actual practice, and consequently has only remained as a theoretical tool for statisticians to construct adaptive procedures.

In this paper, we hope to shatter this perception by showing that the aforementioned Bayes Lepski's method can be programmed on a computer to do actual tuning parameter selection. As the optimal $J$ is increasing in $n$, we tend to stop sooner when sample size is larger, and this is in stark contrast to established methods such as leave-one-out cross validation. It will be seen that pairwise differences in posterior means are used as a proxy for posterior mean bias, and due to their nature of being shrinkage estimators results in better performance (in terms of coverage probabilities) when compared to their frequentist counterpart. More importantly as a Bayesian procedure, the Bayes Lepski's method alleviates the need to design complicated reversible jump Markov Chain Monte Carlo algorithms if $J$ was assigned a prior, and it moreover ensures that the correct $J$ is chosen based on the loss/distance used in the problem.

Equipped with this mechanism, we present a method to construct Bayesian credible bands with guaranteed coverage probabilities, where this statistical inference task is reformulated into a problem in geometry, and the band's radius is computed based on finding the volume of certain tubular neighborhoods embedded on a unit sphere. In many cases, this volume can be explicitly calculated and it yields closed-form formulas for radius computation. Thus, the volume of tube approach bypasses the need to sample repeatedly from the posterior to estimate the radius empirically, and coupled with the Bayes Lepski's method introduced previously, results in an efficient procedure for adaptive Bayesian credible band construction with coverage guarantees. The idea of using volume of geometrical objects to compute statistical quantities goes back to \citep{hotelling} in the context of hypothesis testing. Subsequent authors used and generalized this idea to other applications in frequentist statistics, for example volume tests of significance, simultaneous inference, projection pursuit regression and others (see \citep{naiman1986,knowles,johnstone}). The Bayes Lepski volume of tube method presented above was partly inspired by these ideas and we address some important issues that were not considered by these authors, in particular tuning parameter selection (through Bayes Lepski) and self-similarity.

We apply our general result to B-splines and wavelets, and this inadvertently led us to some counterintuitive notions in uncertainty quantification. It should be intuitively clear that the Bayes Lepski's procedure cannot choose $J$ that is too small as to make the model bias unmanageable. If we were using B-splines of order $q$ as our prior, this event is prevented by requiring that the true function be a certain distance away from the space of $q$-order polynomial splines. However if the truth turns out to be a polynomial splines of the same order, then it is at distance zero from this space and we are not able to quantify uncertainty honestly for this truth, despite the fact that we have correctly modelled it using B-splines. This seems to give rise to the phenomenon that the more accurately we are able to model and estimate the truth, the less we are able to access its quality and statistical uncertainty and vice versa, where such statement is reminiscence of the uncertainty principle in quantum mechanics.

The paper is structured as follows. In the next section we state some notations used in this paper. In Section \ref{sec:prior}, we introduce the Bayes Lepski's method and discuss prior and model assumptions. This is then followed by a general result on $L_\infty$-posterior contraction. In Section \ref{sec:credible}, we first give an exposition on the volume of tube method to bound tail probabilities in the spirit of \citep{johnstone}, we then introduce our Bayesian credible band construction and present the main results on its coverage probability and size. Consequently, we specialize our results to B-splines (Section \ref{sec:bspline}) and wavelets (Section \ref{sec:wavelet}). In Section \ref{sec:sim}, we discuss practical implementation and present simulation results. Proofs are in Section \ref{sec:proof} while the Appendix (Section \ref{sec:appendix}) contains some useful auxiliary results that are of independent interests.

\section{Notations and Definitions}
For two sequences $a_n,b_n$ and when $n\rightarrow\infty$, we say that $a_n=O(b_n)$ or $a_n\lesssim b_n$ if $a_n\leq Cb_n$ for some constant $C>0$ not depending on $n$; and we say $a_n=o(b_n)$ if $a_n/b_n\rightarrow0$. The real line is $\mathbb{R}$ while positive integers are $\mathbb{N}$. Let us denote the $L_p(D)$-norms of $f$ as $\|f\|_p=(\int_D|f(x)|^pdx)^{1/p}$, with $\|f\|_\infty=\sup_{x\in D}|f(x)|$ for $p=\infty$. For the vector/series version $\ell_p$-norms, we have $\|\boldsymbol{x}\|_p=(\sum_i|x_i|^p)^{1/p}$ and $\|\boldsymbol{x}\|_\infty=\max_i|x_i|$. As in standard practice, we write $\|\boldsymbol{x}\|_2$ simply as $\|\boldsymbol{x}\|$. Let $\lambda_{\mathrm{max}}(\boldsymbol{A})$ and $\lambda_{\mathrm{min}}(\boldsymbol{A})$ be the maximum and minimum eigenvalues of a matrix $\boldsymbol{A}$. The induced matrix norm is defined as $\|\boldsymbol{A}\|_{(p,q)}=\sup_{\boldsymbol{x}\neq0}\|\boldsymbol{Ax}\|_q/\|\boldsymbol{x}\|_p$. Then it is known that $\|\boldsymbol{A}\|_{(2,2)}=\lambda_{\mathrm{max}}^{1/2}(\boldsymbol{A}^T\boldsymbol{A})$ is the spectral norm, $\|\boldsymbol{A}\|_{(\infty,\infty)}=\max_{1\leq i\leq m}\sum_{j=1}^n|a_{ij}|$ is the max of absolute value of row sums, and $\|\boldsymbol{A}\|_{(1,1)}=\max_{1\leq j\leq n}\sum_{i=1}^m|a_{ij}|$ is the max of absolute value of column sums. By H\"{o}lder's inequality, we have the relation $\|\boldsymbol{A}\|_{(2,2)}\leq\sqrt{\|\boldsymbol{A}\|_{(1,1)}\|\boldsymbol{A}\|_{(\infty,\infty)}}$. If $\boldsymbol{A}$ is symmetric, then $\|\boldsymbol{A}\|_{(2,2)}=|\lambda_{\mathrm{max}}(\boldsymbol{A})|$ and $\|\boldsymbol{A}\|_{(1,1)}=\|\boldsymbol{A}\|_{(\infty,\infty)}$, which in view of the relation above gives $\|\boldsymbol{A}\|_{(2,2)}\leq\|\boldsymbol{A}\|_{(\infty,\infty)}$.

\section{The Bayes Lepski's method}\label{sec:prior}
For $f\in L_2([0,1])$, let us project $f$ onto a lower $J$-dimensional linear subspace spanned by the basis functions $\boldsymbol{a}_J(x)=(a_1(x),\dotsc,a_J(x))^T$. These bases are compactly supported on some interval in $\mathbb{R}$ and they are linearly independent. Note that we do not require them to be orthonormal. We can write this projection as $f(x)=\boldsymbol{a}_J(x)^T\boldsymbol{\theta}$ and the associated linear operator arising from such projection as $K_J$. We then say that the bases are $\upsilon$-regular if $K_J(p)=p$ when $p$ is a polynomial of degree $\upsilon$ or less, i.e., $K_J$ reproduces polynomials of degree $\upsilon$ or less.

Let us choose end points $1\leq j_{\mathrm{min}}\leq j_{\mathrm{max}}\leq n$ such that $j_{\mathrm{min}}=(n/\log{n})^{1/(2\upsilon+1)}$ and $j_{\mathrm{max}}=o(n/\log{n})$. Write the candidate set as $\mathcal{J}:=[j_{\mathrm{min}},j_{\mathrm{max}}]\cap\mathbb{N}$. In what follows, we will only consider $J$ belonging to $\mathcal{J}$, as the latter serves as a reservoir of admissible $J$ for our model.

Next, we endow a Gaussian process prior on $f$ by eliciting a normal prior on the basis coefficients $\boldsymbol{\theta}|\sigma\sim\mathrm{N}(\boldsymbol{\eta},\sigma^2\boldsymbol{\Omega})$. We take $\|\boldsymbol{\eta}\|_\infty<\infty$ and select the prior covariance matrix such that for some constants $0<c_1\leq c_2<\infty$,
\begin{align}\label{eq:prior}
c_1\leq\lambda_{\mathrm{min}}(\boldsymbol{\Omega})\leq\lambda_{\mathrm{max}}(\boldsymbol{\Omega})\leq c_2.
\end{align}
Let $\boldsymbol{A}=(\boldsymbol{a}_J(X_1)^T,\dotsc,\boldsymbol{a}_J(X_n)^T)^T$ be the basis matrix constructed by evaluating the basis vector $\boldsymbol{a}_J(x)$ at the covariates $(X_1,\dotsc,X_n)^T$. Since the bases are linearly independent, the Gram matrix $\boldsymbol{A}^T\boldsymbol{A}$ is nonsingular and we choose the covariates such that for some constants $0<C_1\leq C_2<\infty$,
\begin{align}\label{eq:ceb}
C_1\lambda_{\boldsymbol{A},J}\leq\lambda_{\mathrm{min}}(\boldsymbol{A}^T\boldsymbol{A})\leq\lambda_{\mathrm{max}}(\boldsymbol{A}^T\boldsymbol{A})\leq C_2\lambda_{\boldsymbol{A},J}
\end{align}
as $n\rightarrow\infty$, where the common eigenvalue bound $\lambda_{\boldsymbol{A},J}\rightarrow\infty$ as $n\rightarrow\infty$ for any $J\in\mathcal{J}$, and we assume it is decreasing in $J$ for any fixed $n$. We further assume that $\boldsymbol{A}^T\boldsymbol{A}$ is $m$-banded in the sense that $(\boldsymbol{A}^T\boldsymbol{A})_{ij}=0$ if $|i-j|>m$ for some fixed $m$ not depending on $n$. This is a reasonable assumption in view of the fact that most bases used in applications such as B-splines and Daubechies wavelets produce banded Gram matrices. In fact, bandedness arises naturally for compactly supported basis systems that are constructed by shifting and scaling a base/template function. It will be seen that the shrinkage factor $(\boldsymbol{A}^T\boldsymbol{A}+\boldsymbol{\Omega}^{-1})^{-1}$ will be present in both the posterior mean and variance, and to exploit the bandedness of $\boldsymbol{A}^T\boldsymbol{A}$ in our proofs and also during actual computations, we let the prior precision matrix $\boldsymbol{\Omega}^{-1}$ be $r$-banded with $r$ possibly differing from $m$.

Let us denote the uniform $\ell_1$-and $\ell_2$-norms of the basis vector $\boldsymbol{a}_J$ as
\begin{align}
l_{1,J}:=\sup_{x\in[0,1]}\|\boldsymbol{a}_J(x)\|_1\quad\text{and}\quad l_{2,J}:=\sup_{x\in[0,1]}\|\boldsymbol{a}_J(x)\|,
\end{align}
and we know that $l_{1,J}\geq l_{2,J}$. For any $J\in\mathcal{J}$, we require the common eigenvalue bound be
\begin{align}\label{eq:Al1l2}
\lambda_{\boldsymbol{A},J}\geq l_{1,J}^2.
\end{align}
To deal with unknown $\sigma$, we will use empirical Bayes by maximizing the marginal likelihood $\boldsymbol{Y}|\sigma\sim\mathrm{N}[\boldsymbol{A\eta},\sigma^2(\boldsymbol{A\Omega A}^T+\boldsymbol{I})]$ to get
\begin{align}\label{eq:sigma}
\widehat{\sigma}_J^2=n^{-1}(\boldsymbol{Y}-\boldsymbol{A\eta})^T(\boldsymbol{A\Omega A}^T+\boldsymbol{I})^{-1}(\boldsymbol{Y}-\boldsymbol{A\eta}).
\end{align}
For some large enough constant $\tau>0$, we estimate the optimal number of basis $J$ in $\mathcal{J}$ by
\begin{align}\label{eq:optimalj}
\widehat{j}_n=\min\left\{j\in\mathcal{J}:\|\mathrm{E}_j(f|\boldsymbol{Y})-\mathrm{E}_i(f|\boldsymbol{Y})\|_\infty\leq\tau\widehat{\sigma}_il_{1,i}\sqrt{\frac{\log{i}}{\lambda_{\boldsymbol{A},i}}},\forall i>j,i\in\mathcal{J}\right\}.
\end{align}
Here $\mathrm{E}_J[f(x)|\boldsymbol{Y}]$ is the posterior mean by making its dependence on number of basis $J$ explicit, which by conjugacy is
\begin{align}\label{eq:pmean}
\mathrm{E}_J[f(x)|\boldsymbol{Y}]=\boldsymbol{a}_J(x)^T(\boldsymbol{A}^T\boldsymbol{A}+\boldsymbol{\Omega}^{-1})^{-1}(\boldsymbol{A}^T\boldsymbol{Y}+\boldsymbol{\Omega}^{-1}\boldsymbol{\eta}).
\end{align}
In words, our stopping rule is when the less-than inequality of \eqref{eq:optimalj} is flipped as we take successively smaller $j\in\mathcal{J}$, and set $\widehat{j}_n$ as the previous element when this occurs. Note that since the optimal $J$ is increasing in $n$, we tend to stop sooner when sample size is larger. Compare this to established methods such as leave-one-out cross validation, where we have to iterate across all $J\in\mathcal{J}$ and execution time for each iteration increases exponentially with $n$. Although we begin with a large $J$, computation is still manageable due to the bandedness of $\boldsymbol{A}^T\boldsymbol{A}$ and compact support of the basis function, which has only a fixed number of nonzero values at any given $x$.

If the minimum is over an empty set, i.e., the sup-norm difference in posterior means is always larger than the right hand side on $\mathcal{J}$, we can either let $\widehat{j}_n=j_{\mathrm{max}}$ by default or increase $j_{\mathrm{max}}$ until an appropriate $\widehat{j}_n$ is chosen. On the other hand, if this difference is always less than the right hand side over all $\mathcal{J}$, we conclude that $j_{\mathrm{min}}$ is too large and we enlarge $\mathcal{J}$ by increasing the regularity $\upsilon$ of the bases. We then substitute $J$ for $\widehat{j}_n$ in the conditional posterior distribution $\Pi_J(f|\boldsymbol{Y},\sigma=\sigma_J)$, and we write this Lepski's (Gaussian) posterior simply as $\Pi(f|\boldsymbol{Y})$.

\begin{remark}
The form given in \eqref{eq:optimalj} (especially the right hand side) is asymptotic in nature, and it is used to derive large sample results for credible band coverage. In finite samples, some extra considerations have to be taken into account, for example inducing undersmoothing by encouraging earlier stopping. We do not do sample splitting as we feel that it is more natural to achieve the same aim by finding a good finite sample proxy for the right hand side. More implementation issues will be discussed in Section \ref{sec:sim} on simulations.
\end{remark}

To study Bayesian procedures from a frequentist viewpoint, let us assume the existence of a true regression function $f_0:[0,1]\rightarrow\mathbb{R}$ such that the corresponding true model has the following assumption:\vspace{5pt}

\noindent\textbf{Global assumption on true model}:\\
Under the true distribution $P_0$, $Y_i=f_0(X_i)+\varepsilon_i, i=1,\dotsc,n$, where $\varepsilon_i$ are i.i.d.~Gaussian with mean $0$ and finite variance $\sigma_0^2>0$ for $i=1,\dotsc,n$.

\subsection{Preliminary: adaptive contraction rates in $L_\infty$}
Let $\mathcal{G}$ be a Banach ball defined through the norm $\|\cdot\|_{\mathcal{G}}$, such that for any $g\in\mathcal{G}$, $\|g\|_{\mathcal{G}}\leq R$ for some $R>0$, and there exists a $\boldsymbol{\theta}_0\in\mathbb{R}^J$ where for some constant $C_0>0$ depending on the basis used, we have
\begin{align}\label{eq:approxG}
\|\boldsymbol{a}_J(\cdot)^T\boldsymbol{\theta}_0-g\|_\infty\leq C_0\|g\|_{\mathcal{G}}h(J),
\end{align}
with $h$ being a nonincreasing function such that $h(j_{\mathrm{min}})\rightarrow0$ as $n\rightarrow\infty$, and $\|\boldsymbol{\theta}_0\|_\infty<\infty$. Moreover for $f_0\in\mathcal{G}$, let us define
\begin{align}\label{eq:j0}
j_n^{*}:=\min\left\{j\in\mathcal{J}:\|f_0\|_{\mathcal{G}}h(j)\leq l_{1,j}\sqrt{\frac{\log{j}}{\lambda_{\boldsymbol{A},j}}}\right\}.
\end{align}
Here if $j\geq j_n^{*}$, then the posterior mean bias $\|f_0\|_{\mathcal{G}}h(j)\leq l_{1,j}\sqrt{\log{(j)}/\lambda_{\boldsymbol{A},j}}$; while this inequality will reverse when $j<j_n^{*}$. Therefore we deduce that $j_n^{*}$ solves $\|f_0\|_{\mathcal{G}}h(j)=l_{1,j}\sqrt{\log{(j)}/\lambda_{\boldsymbol{A},j}}$ up to some universal constants not depending on $n$, i.e., $j_n^{*}$ balances the order of the posterior mean bias and the posterior standard deviation as represented by the expression on the right hand side. In this context, the Bayes Lepski's stopping rule in \eqref{eq:optimalj} can be seen as a data driven procedure to estimate this true $j_n^{*}$.

For all results in this section, we formulate our rates in the bias-variance-tradeoff fashion without explicit reference to the smoothness scale that $f_0$ might belong to. A key requirement before establishing contraction rate is consistency of the empirical Bayes estimate $\widehat{\sigma}_J,J\in\mathcal{J}$:
\begin{proposition}[Consistency]\label{prop:sigma}
For any $J\in\mathcal{J}$, there are constants $\xi,C,Q>0$ such that as $n\rightarrow\infty$,
\begin{align*}
\inf_{f_0\in\mathcal{G}}P_0\left(\sigma_0-\xi\delta_{n,J}\leq\widehat{\sigma}_J\leq\sigma_0+\xi\delta_{n,J}\right)\geq1-Ce^{-QJ},
\end{align*}
where $\delta_{n,J}=\sqrt{J/n}+\|f_0\|_{\mathcal{G}}h(J)\rightarrow0$.
\end{proposition}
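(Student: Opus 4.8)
The plan is to condition on the covariates, write the data vector as $\boldsymbol Y=\boldsymbol f_0+\boldsymbol\varepsilon$ with $\boldsymbol f_0=(f_0(X_1),\dotsc,f_0(X_n))^T$ and $\boldsymbol\varepsilon\sim\mathrm N(\boldsymbol 0,\sigma_0^2\boldsymbol I)$, and to abbreviate the symmetric positive definite matrix $\boldsymbol M=(\boldsymbol A\boldsymbol\Omega\boldsymbol A^T+\boldsymbol I)^{-1}$. Setting $\boldsymbol b=\boldsymbol f_0-\boldsymbol A\boldsymbol\eta$ and expanding \eqref{eq:sigma} gives
\[
n\widehat\sigma_J^2=\boldsymbol b^T\boldsymbol M\boldsymbol b+2\boldsymbol\varepsilon^T\boldsymbol M\boldsymbol b+\boldsymbol\varepsilon^T\boldsymbol M\boldsymbol\varepsilon.
\]
Since $\sigma_0>0$ implies $|\widehat\sigma_J-\sigma_0|\le|\widehat\sigma_J^2-\sigma_0^2|/\sigma_0$, it suffices to show $|\widehat\sigma_J^2-\sigma_0^2|\lesssim\delta_{n,J}$ off an event of probability $Ce^{-QJ}$. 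I would first record the spectral bookkeeping: $\boldsymbol A\boldsymbol\Omega\boldsymbol A^T$ has rank $J$ with nonzero eigenvalues of order $\lambda_{\boldsymbol A,J}$ by \eqref{eq:prior} and \eqref{eq:ceb}, so $\boldsymbol 0\preceq\boldsymbol M\preceq\boldsymbol I$, $\operatorname{tr}(\boldsymbol M)=n-J+O(J/\lambda_{\boldsymbol A,J})$ and $\operatorname{tr}(\boldsymbol M^2)\le n$. In particular the $P_0$-mean of $\boldsymbol\varepsilon^T\boldsymbol M\boldsymbol\varepsilon/n$ is $\sigma_0^2\operatorname{tr}(\boldsymbol M)/n=\sigma_0^2+O(J/n)$, and $J/n\le\delta_{n,J}^2\le\delta_{n,J}$ because $\delta_{n,J}\ge\sqrt{J/n}$.

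For the deterministic term I would split $\boldsymbol b=(\boldsymbol f_0-\boldsymbol A\boldsymbol\theta_0)+\boldsymbol A(\boldsymbol\theta_0-\boldsymbol\eta)$ with $\boldsymbol\theta_0$ from \eqref{eq:approxG}. The first summand is bounded entrywise by $C_0\|f_0\|_{\mathcal G}h(J)$, so using $\boldsymbol M\preceq\boldsymbol I$ its quadratic form is $O(n\|f_0\|_{\mathcal G}^2h(J)^2)$, contributing $O(\delta_{n,J}^2)$ after dividing by $n$. The second summand is the main obstacle, because $\boldsymbol M\preceq\boldsymbol I$ only bounds it by a non-vanishing $O(l_{1,J}^2)$; here the shrinkage of $\boldsymbol M$ on the range of $\boldsymbol A$ must be exploited through the identity
\[
\boldsymbol A^T(\boldsymbol A\boldsymbol\Omega\boldsymbol A^T+\boldsymbol I)^{-1}\boldsymbol A=\boldsymbol G^{1/2}\bigl(\boldsymbol G^{1/2}\boldsymbol\Omega\boldsymbol G^{1/2}+\boldsymbol I\bigr)^{-1}\boldsymbol G^{1/2},\qquad \boldsymbol G:=\boldsymbol A^T\boldsymbol A,
\]
whose largest eigenvalue is at most $\lambda_{\max}(\boldsymbol G)/(c_1\lambda_{\min}(\boldsymbol G)+1)\le C_2/(c_1C_1)$, a constant. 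Thus this summand is $O(\|\boldsymbol\theta_0-\boldsymbol\eta\|^2)$, and since $\|\boldsymbol\theta_0-\boldsymbol\eta\|^2\le J(\|\boldsymbol\theta_0\|_\infty+\|\boldsymbol\eta\|_\infty)^2=O(J)$ by the uniform boundedness of the sup-norms over $\mathcal G$, it again contributes $O(J/n)\le\delta_{n,J}^2$ after normalization.

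It remains to handle the two stochastic terms, both uniformly in $f_0$ since they concern $\boldsymbol\varepsilon$ only. The cross term $2\boldsymbol\varepsilon^T\boldsymbol M\boldsymbol b/n$ is centered Gaussian with variance at most $4\sigma_0^2\boldsymbol b^T\boldsymbol M\boldsymbol b/n^2=O(\delta_{n,J}^2/n)$ (using $\boldsymbol M^2\preceq\boldsymbol M$ and the bias bound above), so a Gaussian tail bound makes it $O(\delta_{n,J})$ off an event of probability $2e^{-cn}$. For the quadratic term I would center and apply the Laurent--Massart deviation inequality to $\boldsymbol\varepsilon^T\boldsymbol M\boldsymbol\varepsilon-\sigma_0^2\operatorname{tr}(\boldsymbol M)=\sigma_0^2\sum_k\nu_k(Z_k^2-1)$, with $\nu_k$ the eigenvalues of $\boldsymbol M$ and $Z_k\stackrel{\mathrm{iid}}{\sim}\mathrm N(0,1)$: for any $t>0$ this deviates by at most $2\sigma_0^2\sqrt{\operatorname{tr}(\boldsymbol M^2)\,t}+2\sigma_0^2 t$ off an event of probability $e^{-t}$. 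Taking $t=QJ$ and dividing by $n$, the bound becomes $\lesssim\sqrt{QJ/n}+QJ/n\lesssim\delta_{n,J}$, using $\operatorname{tr}(\boldsymbol M^2)\le n$ and $J=o(n)$.

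Assembling the three estimates and invoking $j_{\max}=o(n/\log n)$ so that $e^{-cn}\le e^{-QJ}$, I obtain $|\widehat\sigma_J^2-\sigma_0^2|\le\xi'\delta_{n,J}$ on an event of $P_0$-probability at least $1-Ce^{-QJ}$; passing from $\widehat\sigma_J^2$ to $\widehat\sigma_J$ as above and relabeling $\xi$ yields the proposition. Every deterministic estimate depends on $f_0$ only through $\|f_0\|_{\mathcal G}\le R$ and the uniformly bounded $\|\boldsymbol\theta_0\|_\infty$, so taking the infimum over $\mathcal G$ changes nothing. Beyond the bias term noted above, the only other delicate point is calibrating the quadratic-form concentration to the scale $\delta_{n,J}\ge\sqrt{J/n}$, which is precisely what converts the generic sub-exponential tail into the required $e^{-QJ}$ rate.
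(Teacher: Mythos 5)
Your proof is correct and follows the same architecture as the paper's: the same three-term decomposition of $n\widehat\sigma_J^2$ into a deterministic bias, a Gaussian cross term, and a centered quadratic form; the same splitting of the bias through $\boldsymbol\theta_0$ from \eqref{eq:approxG}; a normal tail bound at scale $e^{-cn}$ for the cross term absorbed into $e^{-QJ}$ via $j_{\mathrm{max}}=o(n/\log n)$; quadratic-form concentration at scale $e^{-QJ}$; and the conclusion via $|\widehat\sigma_J-\sigma_0|\le|\widehat\sigma_J^2-\sigma_0^2|/\sigma_0$. You differ in two lemma-level choices, both valid. For the delicate term $(\boldsymbol\theta_0-\boldsymbol\eta)^T\boldsymbol{A}^T\boldsymbol{M}\boldsymbol{A}(\boldsymbol\theta_0-\boldsymbol\eta)$ the paper applies the binomial inverse theorem twice to get $n\boldsymbol{U}=\boldsymbol{I}-\boldsymbol{P}_{\boldsymbol{A}}+\boldsymbol{V}$ and uses $(\boldsymbol{I}-\boldsymbol{P}_{\boldsymbol{A}})\boldsymbol{A}=\boldsymbol{0}$ together with $\boldsymbol{A}^T\boldsymbol{V}\boldsymbol{A}=[\boldsymbol\Omega+(\boldsymbol{A}^T\boldsymbol{A})^{-1}]^{-1}<\boldsymbol\Omega^{-1}$, whereas you use the push-through identity $\boldsymbol{A}^T(\boldsymbol{A\Omega A}^T+\boldsymbol{I})^{-1}\boldsymbol{A}=\boldsymbol{G}^{1/2}(\boldsymbol{G}^{1/2}\boldsymbol\Omega\boldsymbol{G}^{1/2}+\boldsymbol{I})^{-1}\boldsymbol{G}^{1/2}$; both yield a constant bound on $\lambda_{\mathrm{max}}(\boldsymbol{A}^T\boldsymbol{M}\boldsymbol{A})$ (your $\lambda_{\mathrm{max}}(\boldsymbol{G})/(c_1\lambda_{\mathrm{min}}(\boldsymbol{G})+1)$ is a legitimate, if slightly loose, bound by monotonicity of $x\mapsto x/(c_1x+1)$, versus the paper's cleaner $c_1^{-1}$). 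For the quadratic term the paper invokes Hanson--Wright, which would extend to sub-Gaussian errors, while you use Laurent--Massart, which is exact for the Gaussian errors assumed in the global model and more elementary; your crude $\operatorname{tr}(\boldsymbol{M}^2)\le n$ replaces the paper's finer computation $\operatorname{tr}(\boldsymbol{U}^2)=n^{-2}[n-J+\operatorname{tr}(\boldsymbol{V}^2)]$ with $\operatorname{tr}(\boldsymbol{V}^2)\lesssim J/\lambda_{\boldsymbol{A},J}$, and it suffices since only $\operatorname{tr}(\boldsymbol{M}^2)\lesssim n$ is needed to turn $t=QJ$ into a deviation of order $\sqrt{J/n}\le\delta_{n,J}$. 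Your reuse of the bias bound for the cross-term variance via $\boldsymbol{M}^2\preceq\boldsymbol{M}$ is marginally slicker than the paper's separate estimates of $\|\boldsymbol{U}(\boldsymbol{F}_0-\boldsymbol{A\theta}_0)\|^2$ and $\|\boldsymbol{AU}(\boldsymbol\theta_0-\boldsymbol\eta)\|^2$. Two cosmetic points only: the two-sided Laurent--Massart bound costs a factor $2e^{-t}$, absorbed into $C$, and your local symbol $\boldsymbol{M}$ clashes with the paper's $\boldsymbol{M}=(\boldsymbol{A}^T\boldsymbol{A}+\boldsymbol\Omega^{-1})^{-1}$ used elsewhere; neither affects correctness.
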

Note that this is an exponential type inequality, as the usual route through mean square error and Markov's inequality prove to be inadequate for $L_\infty$-contraction, this is due to the need of controlling simultaneously $\widehat{\sigma}_i$ for all $i>j$ in the stopping rule of \eqref{eq:optimalj}. Another key ingredient is to ensure that the Bayes Lepski procedure will not stop sooner before reaching the true $j_n^{*}$ with probability tending to $1$.

\begin{proposition}\label{lem:upper}
For large enough $\tau$, there exists a constant $\mu>2$ such that for $n\rightarrow\infty$,
\begin{align*}
\sup_{f_0\in\mathcal{G}}P_0(\widehat{j}_n>j_n^{*})\lesssim\frac{1}{(j_n^{*})^{\mu-2}}.
\end{align*}
\end{proposition}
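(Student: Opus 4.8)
The plan is to unwind the definition of the stopping rule \eqref{eq:optimalj}. If $\widehat{j}_n>j_n^{*}$, then $j=j_n^{*}$ must itself fail the defining inequality, so there is an index $i\in\mathcal{J}$ with $i>j_n^{*}$ for which the sup-norm difference of posterior means exceeds its threshold; hence
\begin{align*}
\{\widehat{j}_n>j_n^{*}\}\subseteq\bigcup_{i\in\mathcal{J},\,i>j_n^{*}}\Big\{\big\|\mathrm{E}_{j_n^{*}}(f|\boldsymbol{Y})-\mathrm{E}_i(f|\boldsymbol{Y})\big\|_\infty>\tau\widehat{\sigma}_i l_{1,i}\sqrt{\tfrac{\log i}{\lambda_{\boldsymbol{A},i}}}\Big\},
\end{align*}
and I would control the right-hand side by a union bound over the (at most $j_{\mathrm{max}}$) indices $i$. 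Writing $\boldsymbol{M}_J=(\boldsymbol{A}_J^T\boldsymbol{A}_J+\boldsymbol{\Omega}^{-1})^{-1}$ with $\boldsymbol{A}_J$ the basis matrix at resolution $J$, and $\boldsymbol{f}_0=(f_0(X_1),\dots,f_0(X_n))^T$, under $P_0$ the posterior mean splits into a deterministic part and a mean-zero Gaussian part,
\begin{align*}
\mathrm{E}_J[f(x)|\boldsymbol{Y}]=\boldsymbol{a}_J(x)^T\boldsymbol{M}_J(\boldsymbol{A}_J^T\boldsymbol{f}_0+\boldsymbol{\Omega}^{-1}\boldsymbol{\eta})+\boldsymbol{a}_J(x)^T\boldsymbol{M}_J\boldsymbol{A}_J^T\boldsymbol{\varepsilon}=:m_J(x)+S_J(x),
\end{align*}
so that $\mathrm{E}_{j_n^{*}}(f|\boldsymbol{Y})-\mathrm{E}_i(f|\boldsymbol{Y})=(m_{j_n^{*}}-m_i)+(S_{j_n^{*}}-S_i)$ separates a bias contribution from a stochastic one.

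For the bias I would invoke \eqref{eq:approxG}: the standard shrinkage-bias estimate for the posterior mean under \eqref{eq:prior}--\eqref{eq:Al1l2} gives $\|m_J-f_0\|_\infty\lesssim\|f_0\|_{\mathcal{G}}h(J)$ uniformly over $f_0\in\mathcal{G}$, whence by the triangle inequality and monotonicity of $h$ one gets $\|m_{j_n^{*}}-m_i\|_\infty\lesssim\|f_0\|_{\mathcal{G}}h(j_n^{*})$ for every $i>j_n^{*}$. By the definition \eqref{eq:j0} of $j_n^{*}$ this is at most $l_{1,j_n^{*}}\sqrt{\log(j_n^{*})/\lambda_{\boldsymbol{A},j_n^{*}}}$, and since $l_{1,J}\sqrt{\log J/\lambda_{\boldsymbol{A},J}}$ is nondecreasing in $J$ (as $\lambda_{\boldsymbol{A},J}$ is decreasing and $l_{1,J}$ nondecreasing), it is bounded by $C\,l_{1,i}\sqrt{\log i/\lambda_{\boldsymbol{A},i}}$ for a fixed constant $C$. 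Next, Proposition \ref{prop:sigma} together with a union bound shows that, off an event of probability $\lesssim e^{-Qj_n^{*}}$, one has $\widehat{\sigma}_i\geq\sigma_0/2$ simultaneously for all $i>j_n^{*}$. On this event the threshold is at least $\tfrac{\tau\sigma_0}{2}l_{1,i}\sqrt{\log i/\lambda_{\boldsymbol{A},i}}$, so once $\tau$ is large enough that $\tfrac{\tau\sigma_0}{2}>C$, each event in the union forces the stochastic part to be large:
\begin{align*}
\big\|S_{j_n^{*}}-S_i\big\|_\infty>\Big(\tfrac{\tau\sigma_0}{2}-C\Big)l_{1,i}\sqrt{\tfrac{\log i}{\lambda_{\boldsymbol{A},i}}}=:\tau' l_{1,i}\sqrt{\tfrac{\log i}{\lambda_{\boldsymbol{A},i}}},
\end{align*}
where $\tau'$ can be made arbitrarily large with $\tau$.

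It then remains to bound, for each $i$, the Gaussian supremum tail $P_0(\|S_{j_n^{*}}-S_i\|_\infty>\tau'l_{1,i}\sqrt{\log i/\lambda_{\boldsymbol{A},i}})$; I would split $\|S_{j_n^{*}}-S_i\|_\infty\leq\|S_{j_n^{*}}\|_\infty+\|S_i\|_\infty$ and treat each term. For fixed $J$, $S_J$ is a centered Gaussian process with, by \eqref{eq:ceb} and $\boldsymbol{M}_J\boldsymbol{A}_J^T\boldsymbol{A}_J\boldsymbol{M}_J\preceq(\boldsymbol{A}_J^T\boldsymbol{A}_J)^{-1}$, pointwise variance $\mathrm{Var}_0(S_J(x))\lesssim\sigma_0^2\|\boldsymbol{a}_J(x)\|^2/\lambda_{\boldsymbol{A},J}\leq\sigma_0^2 l_{2,J}^2/\lambda_{\boldsymbol{A},J}\leq\sigma_0^2 l_{1,J}^2/\lambda_{\boldsymbol{A},J}$. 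Because the basis is compactly supported and $\boldsymbol{A}_J^T\boldsymbol{A}_J$ is $m$-banded, at any $x$ only finitely many coordinates are active and the continuous supremum is comparable to a maximum over $O(J)$ points (a maximal inequality of the type collected in the Appendix). Combining a Gaussian tail at these points with a union bound over them yields
\begin{align*}
P_0\Big(\|S_i\|_\infty>\tfrac{\tau'}{2}l_{1,i}\sqrt{\tfrac{\log i}{\lambda_{\boldsymbol{A},i}}}\Big)\lesssim i\exp\!\Big(-c\,\tau'^2\log i\Big)=i^{-(c\tau'^2-1)},
\end{align*}
with an analogous but smaller bound for $\|S_{j_n^{*}}\|_\infty$, whose natural scale $l_{1,j_n^{*}}\sqrt{\log j_n^{*}/\lambda_{\boldsymbol{A},j_n^{*}}}$ is dominated by $l_{1,i}\sqrt{\log i/\lambda_{\boldsymbol{A},i}}$. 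Setting $\mu:=c\tau'^2$ and summing over $i>j_n^{*}$ gives $\sum_{i>j_n^{*}}i^{-(\mu-1)}\lesssim(j_n^{*})^{-(\mu-2)}$, which dominates the negligible $e^{-Qj_n^{*}}$ term; choosing $\tau$ (hence $\tau'$ and $\mu$) large makes $\mu>2$, and all bounds are uniform in $f_0\in\mathcal{G}$, proving the claim.

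I expect the main obstacle to be this final Gaussian-supremum tail estimate: it sits in the critical regime where the threshold matches the expected size of $\sup_x|S_i(x)|$ up to the factor $l_{1,i}/l_{2,i}$, so obtaining a tail whose exponent grows with $\tau$ requires both the sharp variance bound furnished by \eqref{eq:ceb}--\eqref{eq:Al1l2} and a faithful reduction of the continuum supremum to an $O(i)$-point maximum, so that only a $\log i$ entropy factor is paid. Ensuring this holds uniformly across the whole range $i>j_n^{*}$, and that the resulting exponent beats the cost of the union bound over $i$, is precisely what forces $\tau$ to be taken large.
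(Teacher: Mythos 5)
Your proposal follows essentially the same route as the paper's proof: split each posterior mean under $P_0$ into its expectation $m_J=\mathrm{E}_0\widetilde{f}_J$ and a centered Gaussian part $S_J=\widetilde{f}_J-\mathrm{E}_0\widetilde{f}_J$, absorb the bias into the threshold using the definition \eqref{eq:j0} of $j_n^{*}$ and the monotonicity of $l_{1,J}\sqrt{\log J/\lambda_{\boldsymbol{A},J}}$, dispose of $\widehat{\sigma}_i$ via Proposition \ref{prop:sigma}, and finish with a Gaussian supremum tail whose exponent grows like $\tau^2$, summed over the remaining indices. The only organizational difference is that you union directly over the failure of the defining inequality at $j=j_n^{*}$, whereas the paper decomposes over the events $\{\widehat{j}_n=j\}$ for $j>j_n^{*}$ and compares $\widetilde{f}_{j-1}$ with $\widetilde{f}_i$, $i\geq j$, yielding a double sum; both bookkeepings give the same final rate $(j_n^{*})^{-(\mu-2)}$ with $\mu>2$ for $\tau$ large.

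Two steps in your sketch need repair, though both are fixable with tools the paper already has. First, your bias estimate $\|m_J-f_0\|_\infty\lesssim\|f_0\|_{\mathcal{G}}h(J)$ is not quite what holds: Lemma \ref{lem:pbias} gives $\|m_J-f_0\|_\infty\lesssim l_{1,J}/\lambda_{\boldsymbol{A},J}+\|f_0\|_{\mathcal{G}}h(J)$, and the shrinkage term $l_{1,J}/\lambda_{\boldsymbol{A},J}$ (coming from the prior mean and precision) is not in general dominated by $\|f_0\|_{\mathcal{G}}h(J)$; it must be carried along, and is then harmlessly absorbed into the threshold because \eqref{eq:Al1l2} yields $l_{1,i}/\lambda_{\boldsymbol{A},i}\leq\lambda_{\boldsymbol{A},i}^{-1/2}\leq l_{1,i}\sqrt{\log{(i)}/\lambda_{\boldsymbol{A},i}}$. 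Second, your assertion that the continuum supremum of $S_i$ is ``comparable to a maximum over $O(i)$ points'' is not justified as stated for a Gaussian process; the rigorous mechanism, which is exactly the paper's Lemma \ref{lem:concentrate}, is H\"{o}lder's inequality $\sup_x|S_J(x)|\leq l_{1,J}\|\boldsymbol{M}\|_{(\infty,\infty)}\|\boldsymbol{A}^T\boldsymbol{\varepsilon}\|_\infty$ (with $\|\boldsymbol{M}\|_{(\infty,\infty)}\lesssim\lambda_{\boldsymbol{A},J}^{-1}$ from the banded-matrix Lemma \ref{lem:BBinfty}), which reduces the \emph{expectation} of the supremum to a $J$-coordinate Gaussian maximum of size $l_{1,J}\sqrt{\log{(J)}/\lambda_{\boldsymbol{A},J}}$, followed by Borell's inequality with the pointwise variance bound $\sup_x\mathrm{Var}_0 S_J(x)\lesssim l_{2,J}^2/\lambda_{\boldsymbol{A},J}$ to get the tail $e^{-x}$ at $x=\mu\log i$ with $\mu$ growing like $\tau^2$. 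Your closing remark correctly identifies this as the crux; with Lemma \ref{lem:concentrate} substituted for the discretization heuristic (and the bias term corrected), your argument goes through and coincides in substance with the paper's.
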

These two key perliminaries will then enable us to establish the following result on adaptive sup-norm posterior contraction.

\begin{theorem}\label{th:rate}
As $n\rightarrow\infty$, there are constants $\xi,M>0$ such that
\begin{align}
\sup_{f_0\in\mathcal{G}}\mathrm{E}_0\Pi\left[\|f-f_0\|_\infty>\xi l_{1,j_n^{*}}\sqrt{\frac{\log{j_n^{*}}}{\lambda_{\boldsymbol{A},j_n^{*}}}}\middle|\boldsymbol{Y}\right]\lesssim\left(j_n^{*}\right)^{-M}.
\end{align}
\end{theorem}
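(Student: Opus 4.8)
The plan is to transfer the statement about the data-driven Lepski posterior (which conditions on $\widehat j_n$) to a bound at the oracle index $j_n^{*}$, using the two preliminary propositions to absorb the randomness of $\widehat j_n$ and of $\widehat\sigma$. Write $\epsilon_n^{*}:=l_{1,j_n^{*}}\sqrt{\log j_n^{*}/\lambda_{\boldsymbol A,j_n^{*}}}$ for the target rate and $\widehat f_J:=\mathrm E_J(f\mid\boldsymbol Y)$ for the posterior mean at index $J$. Under the Lepski posterior, the triangle inequality gives
\begin{align*}
\|f-f_0\|_\infty\le\|f-\widehat f_{\widehat j_n}\|_\infty+\|\widehat f_{\widehat j_n}-\widehat f_{j_n^{*}}\|_\infty+\|\widehat f_{j_n^{*}}-f_0\|_\infty,
\end{align*}
where the first term is random under the posterior and the last two depend on the data only. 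First I would introduce the good events $E_n=\{\widehat j_n\le j_n^{*}\}$ and $S_n=\{|\widehat\sigma_{j_n^{*}}-\sigma_0|\le\xi\delta_{n,j_n^{*}}\}$; by Proposition~\ref{lem:upper} and Proposition~\ref{prop:sigma} their complements satisfy $P_0(E_n^c)\lesssim(j_n^{*})^{-(\mu-2)}$ and $P_0(S_n^c)\lesssim e^{-Qj_n^{*}}$, the latter negligible against the former.

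Next I would dispatch the two data-dependent terms on $E_n\cap S_n$. The middle term is the heart of the Lepski argument: since $\widehat j_n$ satisfies the defining constraint of \eqref{eq:optimalj} and, on $E_n$, the oracle $j_n^{*}$ is one of the indices $i>\widehat j_n$ appearing there,
\begin{align*}
\|\widehat f_{\widehat j_n}-\widehat f_{j_n^{*}}\|_\infty\le\tau\widehat\sigma_{j_n^{*}}l_{1,j_n^{*}}\sqrt{\log j_n^{*}/\lambda_{\boldsymbol A,j_n^{*}}}\lesssim\epsilon_n^{*},
\end{align*}
using $\widehat\sigma_{j_n^{*}}\lesssim\sigma_0$ on $S_n$; note this controls a possibly far-too-small $\widehat j_n$ \emph{without} ever bounding its bias directly. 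For the last term I would split $\widehat f_{j_n^{*}}-f_0$ into the deterministic bias $\mathrm E_0\widehat f_{j_n^{*}}-f_0$ and the centred stochastic part $\boldsymbol a_{j_n^{*}}(\cdot)^T(\boldsymbol A^T\boldsymbol A+\boldsymbol\Omega^{-1})^{-1}\boldsymbol A^T\boldsymbol\varepsilon$. Using the polynomial reproduction \eqref{eq:approxG} together with \eqref{eq:ceb} and \eqref{eq:prior}, the bias is $\lesssim\|f_0\|_{\mathcal G}h(j_n^{*})$ up to a lower-order shrinkage contribution, which is $\le\epsilon_n^{*}$ by the very definition \eqref{eq:j0} of $j_n^{*}$. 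The stochastic part is a mean-zero Gaussian process in $x$; on a good data event $G_n$ a Borell--TIS/maximal inequality over the effectively $j_n^{*}$-dimensional (banded) index set yields $\|\widehat f_{j_n^{*}}-\mathrm E_0\widehat f_{j_n^{*}}\|_\infty\lesssim\sigma_0\epsilon_n^{*}$ with $P_0(G_n^c)\lesssim(j_n^{*})^{-M'}$, where $M'$ grows with the proportionality constant. Hence on $E_n\cap S_n\cap G_n$ both data-dependent terms are $\lesssim\epsilon_n^{*}$.

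It remains to control the posterior-spread term. Conditionally on $\boldsymbol Y$ and $\widehat j_n$, the process $x\mapsto(f-\widehat f_{\widehat j_n})(x)$ is centred Gaussian with pointwise standard deviation $\lesssim\widehat\sigma_{\widehat j_n}l_{2,\widehat j_n}/\sqrt{\lambda_{\boldsymbol A,\widehat j_n}}=:s_{\widehat j_n}$, and $s_J$ is nondecreasing in $J$. Applying Borell--TIS, and using that the posterior mean of the supremum is $\lesssim s_{\widehat j_n}\sqrt{\log\widehat j_n}$, I obtain on $S_n$
\begin{align*}
\Pi\!\left(\|f-\widehat f_{\widehat j_n}\|_\infty>C\epsilon_n^{*}\,\middle|\,\boldsymbol Y\right)\le\exp\!\left(-c\,\bigl(C\epsilon_n^{*}-s_{\widehat j_n}\sqrt{\log\widehat j_n}\bigr)^2\big/s_{\widehat j_n}^2\right).
\end{align*}
The monotonicity $s_{\widehat j_n}\le s_{j_n^{*}}\le l_{1,j_n^{*}}/\sqrt{\lambda_{\boldsymbol A,j_n^{*}}}$ (from $l_{2,J}\le l_{1,J}$ and $\widehat j_n\le j_n^{*}$) gives $\epsilon_n^{*}\ge s_{\widehat j_n}\sqrt{\log j_n^{*}}$, so for $C$ large the exponent dominates a constant multiple of $\log j_n^{*}$ and this posterior tail is $\lesssim(j_n^{*})^{-M}$ \emph{uniformly} over $\widehat j_n\in[j_{\min},j_n^{*}]$. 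Finally I would assemble: choosing $\xi$ larger than the sum of the preceding constants, on $E_n\cap S_n\cap G_n$ the event $\{\|f-f_0\|_\infty>\xi\epsilon_n^{*}\}$ forces $\{\|f-\widehat f_{\widehat j_n}\|_\infty>C\epsilon_n^{*}\}$, whose posterior probability is $\lesssim(j_n^{*})^{-M}$; off this event I bound the posterior probability by $1$ and pay $P_0\bigl((E_n\cap S_n\cap G_n)^c\bigr)\lesssim(j_n^{*})^{-(\mu-2)}+e^{-Qj_n^{*}}+(j_n^{*})^{-M'}$. Taking $\mathrm E_0$ and then $\sup_{f_0\in\mathcal G}$ gives the claimed $(j_n^{*})^{-M}$ after relabelling $M$ as the smallest exponent.

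I expect the main obstacle to be the two Gaussian-suprema bounds with the correct geometry: matching the $l_{1,j_n^{*}}$ factor of the target rate (rather than $l_{2,j_n^{*}}$), and making the posterior tail polynomial in $j_n^{*}$ rather than in the random, possibly much smaller $\widehat j_n$. The resolution is that the rate is anchored at the larger oracle index while the standard deviation is evaluated at $\widehat j_n\le j_n^{*}$, so the index gap only helps; but verifying the maximal inequality over the effectively $J$-dimensional banded index set and tracking the $\widehat\sigma$-dependence uniformly is the delicate bookkeeping. By contrast, the Lepski step and the bias bound are essentially immediate once Propositions~\ref{prop:sigma} and~\ref{lem:upper} and the definition of $j_n^{*}$ are in hand.
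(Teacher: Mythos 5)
Your proposal is correct and follows essentially the same route as the paper's proof: the same split via the events $\{\widehat j_n\le j_n^{*}\}$ (Proposition \ref{lem:upper}) and $\widehat\sigma$-consistency (Proposition \ref{prop:sigma}), the same four-term triangle inequality anchored at the oracle $j_n^{*}$ with the Lepski constraint of \eqref{eq:optimalj} controlling $\|\widetilde f_{\widehat j_n}-\widetilde f_{j_n^{*}}\|_\infty$, the bias lemma together with the definition \eqref{eq:j0} of $j_n^{*}$ for the approximation term, and Borell's inequality for both the frequentist fluctuation and the posterior spread. Two small bookkeeping remarks: your $l_{2}$-scaled claim that the posterior expected supremum is $\lesssim s_{\widehat j_n}\sqrt{\log\widehat j_n}$ is left unproven, whereas the paper's Lemma \ref{lem:varinf} establishes the $l_{1,J}$-scaled version by H\"older's inequality and the bandedness Lemma \ref{lem:BBinfty}, which suffices here since $l_{2,J}\le l_{1,J}$ and the target rate carries $l_{1,j_n^{*}}$; and your event $S_n$ controls only $\widehat\sigma_{j_n^{*}}$, while the spread term depends on $\widehat\sigma_{\widehat j_n}$ and so needs uniform control over $j\in[j_{\mathrm{min}},j_n^{*}]$, which the paper obtains by a union bound over the exponential tails of Proposition \ref{prop:sigma}.
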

The rate above is in abstract form that is applicable to general basis systems that fulfil our modeling assumptions. Concrete rates will be given in Sections \ref{sec:bspline} and \ref{sec:wavelet} to B-splines and wavelets respectively. Using the same techniques utilized in Corollary 4.5 of \citep{yoo2017}, we further deduce that the posterior mean as a point estimator converges to $f_0$ at the rate indicated above uniformly over $[0,1]$ in $P_0$-probability.

\section{Adaptive credible bands through geometry}\label{sec:credible}
In Bayesian nonparametrics, the standard construction of credible bands is a $L_\infty$-ball of fixed radius around the posterior mean. However in many problems, it is more natural to allow the width to depend on the regression function's domain such that its radius is a function of $x\in[0,1]$. This enables the bands to adapt to local characteristics of the curve to be estimated and hence is expected to give better uncertainty quantification. As an example, for some fixed $J\in\mathcal{J}$ and a given credibility level $1-\gamma$, we can construct bands by using the posterior mean as center and some multiple of the posterior standard deviation as radius as follows:
\begin{align*}
\mathcal{C}_J:=\left\{f:|f(x)-\mathrm{E}_J[f(x)|\boldsymbol{Y}]|\leq w_{\gamma,J}\sqrt{\mathrm{Var}_J[f(x)|\boldsymbol{Y}]},\forall x\in[0,1]\right\},
\end{align*}
and we choose the quantile $w_{\gamma,J}>0$ such that the posterior probability of this set is at least $1-\gamma$. For our prior formulation $f(x)=\boldsymbol{a}_J(x)^T\boldsymbol{\theta}$, the computation of $w_{\gamma,J}$ involves estimating
\begin{align}\label{eq:credc}
\Pi\left[\sup_{x\in[0,1]}\left|\frac{\boldsymbol{a}_J(x)^T[\boldsymbol{\theta}-\mathrm{E}_J(\boldsymbol{\theta}|\boldsymbol{Y})]}{\sqrt{\boldsymbol{a}_J(x)^T\mathrm{Var}_J(\boldsymbol{\theta}|\boldsymbol{Y})\boldsymbol{a}_J(x)}}\right|>w_{\gamma,J}\middle|\boldsymbol{Y}\right].
\end{align}
Now if we have a sharp upper bound $U(w_{\gamma,J})$ to this tail probability, then we find $w_{\gamma,J}$ such that it solves $U(w_{\gamma,J})=\gamma$.

With this choice, let us now access the coverage (in the frequentist sense) of the Bayesian credible band $\mathcal{C}_J$ by estimating $P_0(f_0\notin\mathcal{C}_J)$. By selecting $J\in\mathcal{J}$ in an optimal fashion (e.g., through the Bayes Lepski's method), it will be seen that this quantity is bounded above up to some negligible terms by another tail probability of the form
\begin{align}\label{eq:p0}
P_0\left[\sup_{x\in[0,1]}\left|\frac{\boldsymbol{a}_J(x)^T\{\mathrm{E}_J(\boldsymbol{\theta}|\boldsymbol{Y})-\mathrm{E}_0\mathrm{E}_J(\boldsymbol{\theta}|\boldsymbol{Y})\}}{\sqrt{\boldsymbol{a}_J(x)^T\mathrm{Var}_0\mathrm{E}_J(\boldsymbol{\theta}|\boldsymbol{Y})\boldsymbol{a}_J(x)}}\right|>w_{\gamma,J}\right].
\end{align}
Again if we have a sharp upper bound $G(w_{\gamma,J})$ to the above and establish that $G(w_{\gamma,J})\leq U(w_{\gamma,J})\leq\gamma$, then Bayesian credible band $\mathcal{C}_J$ of credibility at least $1-\gamma$ corresponds to a frequentist confidence band of confidence level at least $1-\gamma$. Therefore to achieve this almost exact correspondence between Bayesian and frequentist quantification of uncertainty, it is then imperative that we are able to get sharp upper bounds for tail probabilities of ``standardized" Gaussian processes in supremum norm.

\subsection{Tail probabilities and the volume-of-tube formula}
In this subsection, let us temporarily leave the regression framework and work with general Gaussian processes generated through a random series. We will divide our ordeal into 3 steps and will revisit the issue of Bayesian credible band construction in the next subsection.\vspace{10pt}

\noindent\textit{(i) Recast statistical problem for geometric formulation}\\
Let $\boldsymbol{X}\sim\mathrm{N}_J(\boldsymbol{\mu},\boldsymbol{\Sigma})$ and define
\begin{align*}
W:=\sup_{x\in[0,1]}\frac{\boldsymbol{a}_J(x)^T(\boldsymbol{X}-\boldsymbol{\mu})}{\sqrt{\boldsymbol{a}_J(x)^T\boldsymbol{\Sigma}\boldsymbol{a}_J(x)}}.
\end{align*}
Our task is then to estimate as accurately as possible $P(W>w)$, which with appropriate $\boldsymbol{\mu}$ and $\boldsymbol{\Sigma}$ corresponds to the two tail probabilities mentioned above in \eqref{eq:credc} and \eqref{eq:p0}(since $P(|W|>w)\leq2P(W>w)$). We further decompose $W=QR$ such that $R^2=(\boldsymbol{X}-\boldsymbol{\mu})^T\boldsymbol{\Sigma}^{-1}(\boldsymbol{X}-\boldsymbol{\mu})$ and
\begin{align*}
Q&=\sup_{x\in[0,1]}\frac{\boldsymbol{a}_J(x)^T(\boldsymbol{X}-\boldsymbol{\mu})}{\sqrt{\boldsymbol{a}_J(x)^T\boldsymbol{\Sigma}\boldsymbol{a}_J(x)}}\frac{1}{\sqrt{(\boldsymbol{X}-\boldsymbol{\mu})^T\boldsymbol{\Sigma}^{-1}(\boldsymbol{X}-\boldsymbol{\mu})}}\\
&=\sup_{x\in[0,1]}\underbrace{\frac{\left[\boldsymbol{\Sigma}^{1/2}\boldsymbol{a}_J(x)\right]^T}{\left\|\boldsymbol{\Sigma}^{1/2}\boldsymbol{a}_J(x)\right\|}}_{\beta_J(x)}
\underbrace{\frac{\boldsymbol{\Sigma}^{-1/2}(\boldsymbol{X}-\boldsymbol{\mu})}{\left\|\boldsymbol{\Sigma}^{-1/2}(\boldsymbol{X}-\boldsymbol{\mu})\right\|}}_U=:\sup_{x\in[0,1]}\langle\beta_J(x),U\rangle.
\end{align*}
Since $\boldsymbol{\Sigma}^{-1/2}(\boldsymbol{X}-\boldsymbol{\mu})\sim\mathrm{N}_J(\boldsymbol{0},\boldsymbol{I})$, it follows that $U$ is uniformly distributed on the unit sphere $S^{J-1}$; while $\beta_J$ is a curve that maps $[0,1]$ to $S^{J-1}$. Observe that $Q$ is independent of $R$, with $R^2$ distributed as chi-square with $J$ degrees of freedom $\chi^2_J$. Therefore,
\begin{align}\label{eq:w}
P(W>w)=\int_{w}^\infty P\left(\sup_{x\in[0,1]}\langle\beta_J(x),U\rangle>wr^{-1}\right)p_{\chi^2_J}(r)dr,
\end{align}
where $p_{\chi_J^2}$ is the density function of $\chi^2_J$, and we compute the tail probability of $W$ by using chi-square mixtures. Thus, our task is to evaluate the probability in the integrand and this is where concepts from geometry comes into play.\vspace{10pt}

\noindent\textit{(ii) Tubular neighborhoods and the geometric connection}\\
Let $u$ be some point on the unit sphere $S^{J-1}$, and we define a metric $d$ to measure distance between $u$ and the curve $\beta_J$ by finding the nearest point on $\beta_J$ to $u$ in $\ell_2$-norm, such that $d(u,\beta_J)^2=\inf_{x\in[0,1]}\|u-\beta_J(x)\|$. Since $\|u\|^2=1=\|\beta_J(x)\|^2$ for any $x$, this squared distance can also be written as $2(1-\sup_{x\in[0,1]}\langle\beta_J(x),u\rangle)$. Keeping this relation in mind, let us define tubular neighborhoods parameterized by $\theta$ around $\beta_J$ as
\begin{align*}
\mathcal{T}_{\theta}=\{u\in S^{J-1}:d(u,\beta_J)\leq[2(1-\cos{\theta})]^{1/2}\}=\left\{u\in S^{{J-1}}:\sup_{x\in[0,1]}\langle\beta_J(x),u\rangle\geq\cos{\theta}\right\},
\end{align*}
and we call $\theta$ the geodesic or angular radius that controls the (geodesic) width of these tubes. To visualize these objects, see Figure \ref{fig:tube}.
\begin{figure}[h!]
\centering
\begin{subfigure}[b]{0.32\textwidth}
\includegraphics[width=8cm,trim={4cm 17cm 0 2cm},clip]{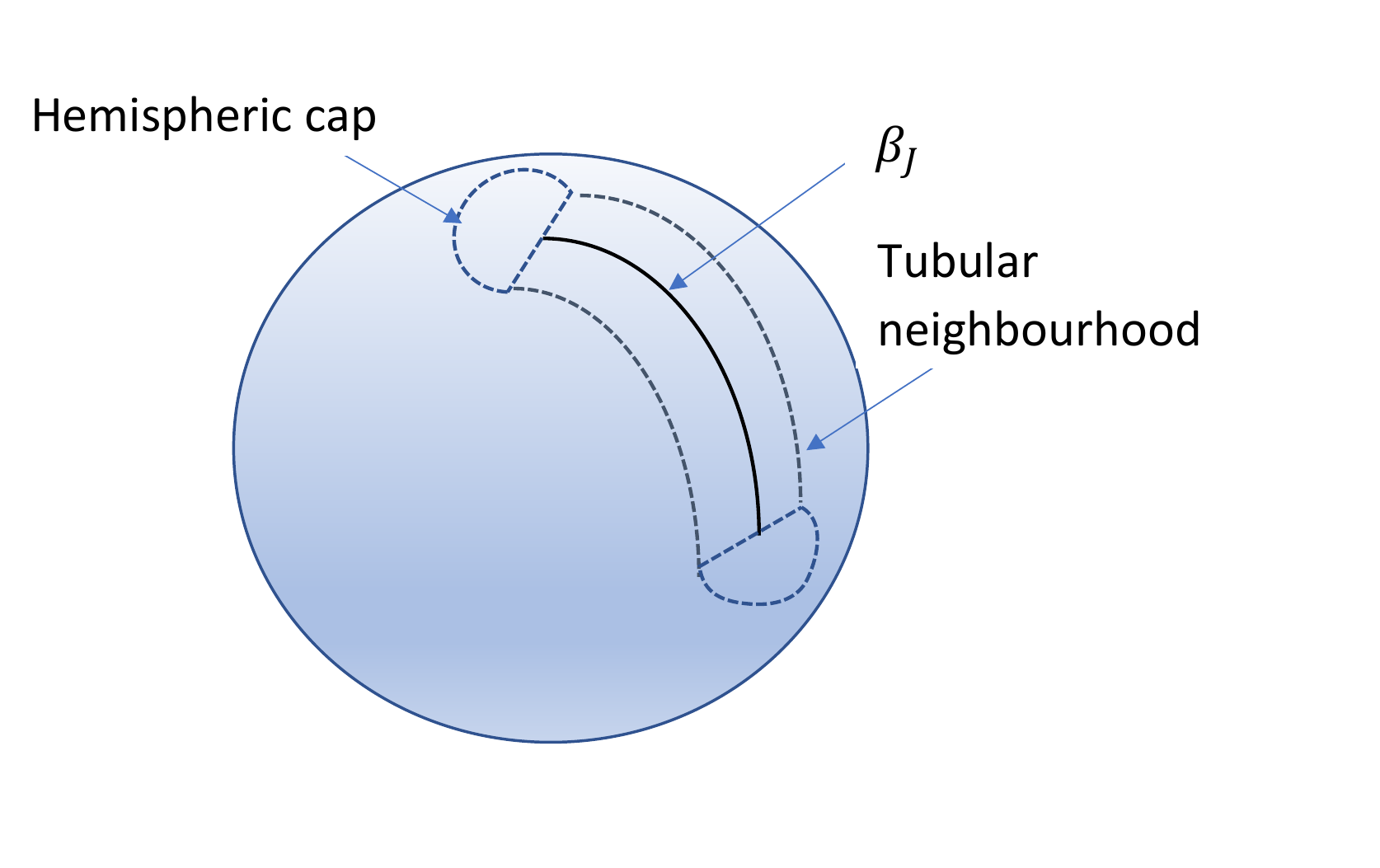}
\caption{Tubular neighborhood on unit sphere}
\label{fig:tubea}
\end{subfigure}
\qquad\quad\quad
\begin{subfigure}[b]{0.32\textwidth}
\includegraphics[width=8cm,trim={0 15cm 0 2cm},clip]{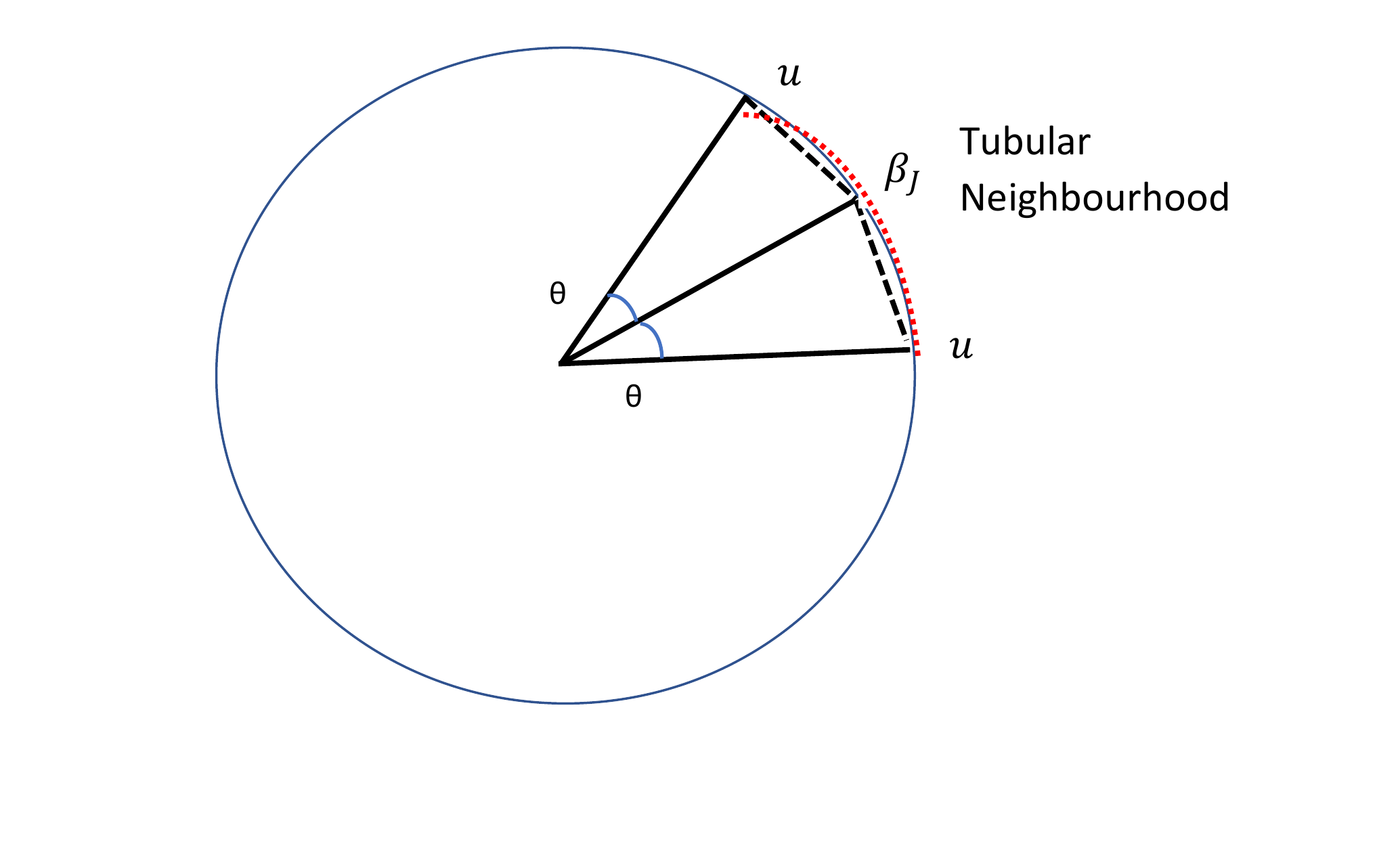}
\caption{Cross-section of tubular neighborhood}
\end{subfigure}
\caption{Tubular neighborhood around curve $\beta_J$}
\label{fig:tube}
\end{figure}

For a set $A$ in $\mathbb{R}^J$, write $V(A)$ to be the volume of $A$ with respect to the Lebesgue measure. Now recall that $V(S^{J-1})=2\pi^{J/2}/\Gamma(J/2)$. Let $U$ be a random variable uniformly distributed on $S^{J-1}$ (such as the $U$ in \eqref{eq:w} above), then
\begin{align}\label{eq:cprod}
P\left(\sup_{x\in[0,1]}\langle\beta_J(x),U\rangle\geq\cos{\theta}\right)=P(U\in\mathcal{T}_\theta)=\frac{V(\mathcal{T}_{\theta})}{V(S^{J-1})}=\frac{\Gamma{(J/2)}}{2\pi^{J/2}}V(\mathcal{T}_{\theta}),
\end{align}
and what remains is to find the volume of the tube $\mathcal{T}_{\theta}$.\vspace{10pt}

\noindent\textit{(iii) Volume-of-tube formula}\\
The volume of $\mathcal{T}_{\theta}$ depends on the shape the tube can possibly take, and this in turn is affected by 3 situations: $\beta_J$ is closed (a loop), or open with endpoints at $0$ and $1$, or that the tube $\mathcal{T}_{\theta}$ self overlaps. Among these circumstances, the maximal volume is achieved when $\beta_J$ is open with boundaries. In that case, the enveloping tube has 3 pieces (see Figure \ref{fig:tubea}), the main ``cylinder" around the curve whose volume is the curve's length $|\beta_J|$ times its cross-sectional area, and two hemispheric caps at the end points. The exact formula for this upper bound was calculated by \citep{naiman1986}:
\begin{theorem}[Naiman]\label{th:tube}
Let $\beta_J:[0,1]\rightarrow S^{J-1}$ be a continuously differentiable curve, with nowhere vanishing first derivative, and of finite length. Then we have for any $0\leq\theta\leq2\pi$,
\begin{align*}
V(\mathcal{T}_{\theta})\leq|\beta_J|V(\mathcal{B}_{J-2})\sin^{J-2}{\theta}+V(S^{J-2})\int_{\cos{\theta}}^1(1-z^2)^{(J-3)/2}dz,
\end{align*}
where $|\beta_J|$ is the arc length of $\beta_J$ and $\mathcal{B}_{J-2}$ is the unit ball in $\mathbb{R}^{J-2}$.
\end{theorem}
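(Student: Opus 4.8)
The plan is to set up Fermi (tubular) coordinates on the sphere adapted to the curve, reduce the volume to integrating a Jacobian, and obtain the inequality from self-overlap. First I would read the tube as $\mathcal{T}_\theta=\{u\in S^{J-1}:d(u,\beta_J)\leq\theta\}$, i.e.\ the points within angular distance $\theta$ of the image of $\beta_J$, and split it into a \emph{cylindrical} part (points whose nearest curve point is interior to $(0,1)$) and two \emph{caps} (points whose nearest point is an endpoint $\beta_J(0)$ or $\beta_J(1)$). The cylindrical part I would parametrize by $F(x,\phi,N)=\cos\phi\,\beta_J(x)+\sin\phi\,N$, where $x\in[0,1]$, $\phi\in[0,\theta]$, and $N$ ranges over the unit sphere $S^{J-3}$ of the $(J-2)$-dimensional normal space $\mathrm{span}\{\beta_J(x),\dot\beta_J(x)\}^{\perp}$. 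Since $\|F\|=1$ this maps into $S^{J-1}$, and every point of the cylindrical part is in its image.

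Next I would compute the induced volume form using a moving orthonormal frame $\beta_J(x)$, $T(x)=\dot\beta_J(x)/\|\dot\beta_J(x)\|$, and an orthonormal basis of the normal space, together with the spherical structure equation $\dot T=-\|\dot\beta_J\|\beta_J+\|\dot\beta_J\|\kappa_g\,n$, where $\kappa_g$ is the geodesic curvature and $n$ the principal normal. A direct check shows $\partial_\phi F$ is a unit vector orthogonal to $\partial_x F$ and to the $N$-variations, the $S^{J-3}$ directions each carry a factor $\sin\phi$, and the frame-twist contributions cancel from the Gram determinant, leaving the Jacobian $\|\dot\beta_J(x)\|\,\sin^{J-3}\phi\,\bigl(\cos\phi-\kappa_g(x)\sin\phi\langle N,n(x)\rangle\bigr)$. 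Integrating the signed Jacobian over $N\in S^{J-3}$ annihilates the odd curvature term and yields $V(S^{J-3})\cos\phi$; then $\int_0^\theta\cos\phi\,\sin^{J-3}\phi\,d\phi=\sin^{J-2}\theta/(J-2)$ combined with $V(S^{J-3})/(J-2)=V(\mathcal{B}_{J-2})$ produces the first term $|\beta_J|\,V(\mathcal{B}_{J-2})\sin^{J-2}\theta$.

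For the caps, each endpoint contributes the half of a geodesic ball of angular radius $\theta$ consisting of the points whose nearest curve point is that endpoint. Geodesic polar coordinates on $S^{J-1}$ give the surface element $\sin^{J-2}\phi\,d\phi$ against $S^{J-2}$, so each half contributes $\tfrac12 V(S^{J-2})\int_0^\theta\sin^{J-2}\phi\,d\phi$; substituting $z=\cos\phi$, the two halves sum to $V(S^{J-2})\int_{\cos\theta}^1(1-z^2)^{(J-3)/2}dz$, which is exactly the second term.

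The hard part is justifying that these two clean pieces really \emph{over}estimate $V(\mathcal{T}_\theta)$ for \emph{every} $\theta\in[0,2\pi]$, not merely below the critical radius. For small $\theta$ the map $F$ is a diffeomorphism onto its image and Weyl's tube formula gives equality; for larger $\theta$ the normal exponential map folds (the signed Jacobian above changes sign) and $F$ is no longer injective, so the naive area-formula bound $V(\mathcal{T}_\theta)\leq\int|\mathrm{Jac}|$ (valid since the parametrization covers $\mathcal{T}_\theta$ with multiplicity $\geq1$) produces the integral of the \emph{absolute} Jacobian, which can strictly exceed the clean signed expression. The crux is thus to show that the over-counting created by folds and self-intersections is, for an open curve, absorbed by the slack in the cap term, so that replacing $|\cos\phi-\kappa_g\sin\phi\langle N,n\rangle|$ by its signed version still yields an upper bound. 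This monotonicity/multiplicity control — rather than the Jacobian bookkeeping — is where the genuine work (Naiman's contribution) lies; I would attack it through the coarea formula, bounding the $\phi$-level area $\tfrac{d}{d\phi}V(\mathcal{T}_\phi)$ and integrating in $\phi$, which isolates the cap correction cleanly.
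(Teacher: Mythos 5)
Your Fermi-coordinate computation is sound as far as it goes — the Jacobian $\|\dot\beta_J(x)\|\sin^{J-3}\phi\,(\cos\phi-\kappa_g(x)\sin\phi\langle N,n(x)\rangle)$, the cancellation of the odd curvature term under $N$-integration, the identity $V(S^{J-3})/(J-2)=V(\mathcal{B}_{J-2})$, and the cap term via $z=\cos\phi$ are all correct — but the proposal stops exactly where the theorem begins. Note that the paper itself does not prove Theorem \ref{th:tube}; it quotes it from \citep{naiman1986}, and the whole point of Naiman's result (as the paper remarks right after the statement) is that the inequality survives \emph{all} $0\leq\theta\leq2\pi$, past the critical radius where the tube self-overlaps and the normal exponential map folds. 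Your Jacobian bookkeeping reproduces only the known equality regime below that radius. Worse, the repair you sketch does not close the gap: the area formula yields $V(\mathcal{T}_\theta)\leq\int|\mathrm{Jac}|$, and where the map folds, $|\mathrm{Jac}|$ exceeds the signed Jacobian by twice its negative part — an excess governed by the geodesic curvature $\kappa_g$, which has no a priori relation to the cap volume, so there is no mechanism by which "the slack in the cap term" absorbs it. The coarea variant merely relocates the same difficulty: bounding $\frac{d}{d\phi}V(\mathcal{T}_\phi)$ by the cylinder cross-section boundary plus cap boundary is equivalent to the inequality you are trying to prove, one dimension down. A partial local fix exists — at a point $u$ whose nearest curve point is interior, the second-order condition for maximizing $x\mapsto\langle u,\beta_J(x)\rangle$ forces $\cos\phi-\kappa_g\sin\phi\langle N,n\rangle\geq0$, so the nearest-point parametrization lives where your signed Jacobian is nonnegative — but you would still need the complementary region of the $(x,\phi,N)$-domain to contribute nonnegatively to the signed integral, which is again not automatic.

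The argument that actually works, and that Naiman uses, sidesteps curvature entirely through subadditivity. Approximate $\beta_J$ by an inscribed geodesic (great-circle) polygon; for a single geodesic arc one has $\kappa_g\equiv0$ and the tube volume satisfies the bound by direct computation, with any self-overlap only decreasing the true volume. Then induct over consecutive arcs using $V(T_1\cup T_2)=V(T_1)+V(T_2)-V(T_1\cap T_2)$ together with the key observation that $T_1\cap T_2$ contains the \emph{full} geodesic ball of radius $\theta$ about the shared vertex (every point of that ball is within $\theta$ of a point lying on both arcs), and the volume of that ball is exactly the cap term $V(S^{J-2})\int_{\cos\theta}^1(1-z^2)^{(J-3)/2}dz$. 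Each interior vertex therefore trades the two spare half-caps for one deducted cap, leaving total length times cross-section plus a single cap's worth of boundary correction; the bound then passes to $\beta_J$ in the limit by convergence of polygonal length to $|\beta_J|$ and continuity of the tube volumes. This inclusion–exclusion device is what makes the inequality global in $\theta$ without any multiplicity or focal-point analysis, and it is the ingredient missing from your proposal.
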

This volume inequality is sharp, in fact for open $\beta_J$, the above will be an equality when $\theta$ is smaller than some critical radius, which is the threshold for tube self-overlap. Since $V(\mathcal{B}_{J-2})=\pi^{(J-2)/2}/\Gamma(J/2)$ and $V(S^{J-2})=2\pi^{(J-1)/2}/\Gamma[(J-1)/2]$, we can perform a change of variable $u=z^2$ in the integral above and write \eqref{eq:cprod} as
\begin{align*}
P\left(\sup_{x\in[0,1]}\langle\beta_J(x),U\rangle\geq\cos{\theta}\right)\leq\frac{|\beta_J|}{2\pi}\sin^{J-2}{\theta}+\frac{\Gamma(J/2)}{2\pi^{\frac{1}{2}}\Gamma[(J-1)/2]}P\left[B\left(\frac{1}{2},\frac{J-1}{2}\right)\geq\cos^2{\theta}\right]
\end{align*}
where $B(\alpha_1,\alpha_2)$ is a beta distributed random variable with shape and scale parameters $\alpha_1$ and $\alpha_2$ respectively. Choose $\theta$ such that $\cos{\theta}=wr^{-1}$ and note that $\sin{\theta}=(1-\cos^2{\theta})^{1/2}=(1-w^2r^{-2})^{1/2}$, if we plug this bound back to \eqref{eq:w} and compute the chi-square mixtures, we will arrive at
\begin{align}\label{eq:tube}
P\left(\sup_{x\in[0,1]}\frac{\boldsymbol{a}_J(x)^T(\boldsymbol{X}-\boldsymbol{\mu})}{\sqrt{\boldsymbol{a}_J(x)^T\boldsymbol{\Sigma}\boldsymbol{a}_J(x)}}>w\right)\leq\frac{|\beta_J|}{2\pi}e^{-w^2/2}+1-\Phi(w),
\end{align}
with $|\beta_J|=\int_0^1\|\beta_J^{'}(x)\|dx$ for $\beta_J(x)=\boldsymbol{\Sigma}^{1/2}\boldsymbol{a}_J(x)/\|\boldsymbol{\Sigma}^{1/2}\boldsymbol{a}_J(x)\|$ and $\Phi(\cdot)$ is the cumulative distribution function of a standard normal.

\subsection{Variable width Bayesian credible bands}
Let us return to our variable width credible band in the beginning. For $J\in\mathcal{J}$, we construct
\begin{align*}
\mathcal{C}_J:=\left\{f:|f(x)-\mathrm{E}_J[f(x)|\boldsymbol{Y}]|\leq w_{\gamma,J}\sqrt{\mathrm{Var}_J[f(x)|\boldsymbol{Y}]},\forall x\in[0,1]\right\}.
\end{align*}
Motivated by the previous exposition on volume of tubes, for any $J\in\mathcal{J}$ and a given credibility level $1-\gamma$, we choose $w_{\gamma,J}$ such that it solves
\begin{align}
\gamma=\frac{|\beta_J|}{\pi}e^{-w_{\gamma,J}^2/2}+2[1-\Phi(w_{\gamma,J})]\label{eq:gamma},
\end{align}
where the arc length $|\beta_J|=\int_0^1\|\beta_J^{'}(x)\|dx$ under our hierarchical Bayes prior is
\begin{align}
|\beta_J|&=\int_0^1\left\{\frac{d}{dx}\left(\frac{\boldsymbol{M}^{1/2}\boldsymbol{a}_J(x)^T}{[\boldsymbol{a}_J(x)^T\boldsymbol{M}\boldsymbol{a}_J(x)^T]^{1/2}}\right)
\frac{d}{dx}\left(\frac{\boldsymbol{M}^{1/2}\boldsymbol{a}_J(x)}{[\boldsymbol{a}_J(x)^T\boldsymbol{M}\boldsymbol{a}_J(x)^T]^{1/2}}\right)\right\}^{1/2}dx\nonumber\\
&=\int_0^1\frac{[\boldsymbol{a}_J(x)^T\boldsymbol{M}\boldsymbol{R}^T\boldsymbol{M}\boldsymbol{RM}\boldsymbol{a}_J(x)]^{1/2}}
{[\boldsymbol{a}_J(x)^T\boldsymbol{M}\boldsymbol{a}_J(x)]^{3/2}}dx,\label{eq:beta}
\end{align}
for $\boldsymbol{M}:=(\boldsymbol{A}^T\boldsymbol{A}+\boldsymbol{\Omega}^{-1})^{-1}$ and $\boldsymbol{R}:=\boldsymbol{\dot{a}}_J(x)\boldsymbol{a}_J(x)^T-\boldsymbol{a}_J(x)\boldsymbol{\dot{a}}_J(x)^T$ where we denote $\boldsymbol{\dot{a}}_J(x)=(a^{'}_1(x),\dotsc,a^{'}_J(x))^T$. To complete our construction, we choose $J=\widehat{j}_n$ by the Bayes Lepski's stopping rule given in \eqref{eq:optimalj} to give a $1-\gamma$ adaptive simultaneous credible band $\mathcal{C}_{\widehat{j}_n}$.

Now by setting $\boldsymbol{\mu}=\mathrm{E}_{\widehat{j}_n}(\boldsymbol{\theta}|\boldsymbol{Y})$ and $\boldsymbol{\Sigma}=\mathrm{Var}_{\widehat{j}_n}(\boldsymbol{\theta}|\boldsymbol{Y})$, we can use the tail estimate of \eqref{eq:tube} to bound \eqref{eq:credc} and ensure that our credible band $\mathcal{C}_{\widehat{j}_n}$ has at least $1-\gamma$ credibility:
\begin{align*}
\Pi\left[\sup_{0\leq x\leq 1}\frac{|f(x)-\mathrm{E}_{\widehat{j}_n}[f(x)|\boldsymbol{Y}]|}{\sqrt{\mathrm{Var}_{\widehat{j}_n}[f(x)|\boldsymbol{Y}]}}\geq w_{\gamma,\widehat{j}_n}\middle|\boldsymbol{Y}\right]\leq\frac{|\beta_{\widehat{j}_n}|}{\pi}e^{-w_{\gamma,\widehat{j}_n}^2/2}+2[1-\Phi(w_{\gamma,\widehat{j}_n})]=\gamma.
\end{align*}
Is $\mathcal{C}_{\widehat{j}_n}$ also a confidence band of level at least $1-\gamma$? The theorem below answers this in the affirmative. Let us define another curve $\beta_{0,J}:[0,1]\rightarrow S^{J-1}$ such that its arc length has the same functional form as in \eqref{eq:beta} but with $\boldsymbol{M}$ replaced by $\boldsymbol{M}_0:=(\boldsymbol{A}^T\boldsymbol{A}+\boldsymbol{\Omega}^{-1})^{-1}\boldsymbol{A}^T\boldsymbol{A}(\boldsymbol{A}^T\boldsymbol{A}+\boldsymbol{\Omega}^{-1})^{-1}$.

\begin{theorem}[Credible band coverage]\label{th:credible}
Suppose the following 3 assumptions hold ($n\to\infty$):
\begin{enumerate}
\item There exists a set $\mathcal{F}$ such that $\inf_{f_0\in\mathcal{G}\bigcap\mathcal{F}}P_0(\widehat{j}_n\geq j_n^{*}/\kappa)\rightarrow1$ for some constant $\kappa\geq1$.
\item The arc length $|\beta_J|\geq\max\{|\beta_{0,J}|,CJ\}$ for any $J\in\mathcal{J}$ and some constant $C>0$.
\item $(\log{\widehat{j}_n})^{-1/2}\left(\inf_{x\in[0,1]}\|\boldsymbol{a}_{\widehat{j}_n}(x)\|\right)^{-1}\left[l_{1,\widehat{j}_n}\lambda^{-1/2}_{\boldsymbol{A},\widehat{j}_n}+\lambda^{1/2}_{\boldsymbol{A},\widehat{j}_n}h(\widehat{j}_n)\right]=o_{P_0}(1)$ under the event $\{j_n^{*}/\kappa\leq\widehat{j}_n\leq j_n^{*}\}$.
\end{enumerate}
Then as $n\rightarrow\infty$, we have
\begin{align*}
\inf_{f_0\in\mathcal{G}\bigcap\mathcal{F}}P_0\left(f_0\in\mathcal{C}_{\widehat{j}_n}\right)\geq1-\gamma+o(1).
\end{align*}
\end{theorem}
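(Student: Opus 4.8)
The plan is to bound the non-coverage probability $P_0(f_0\notin\mathcal{C}_{\widehat{j}_n})$ by $\gamma+o(1)$. First I would localize the data-driven index: the first assumption gives $\inf_{f_0\in\mathcal{G}\cap\mathcal{F}}P_0(\widehat{j}_n\geq j_n^{*}/\kappa)\to1$, while Proposition \ref{lem:upper} gives $\sup_{f_0\in\mathcal{G}}P_0(\widehat{j}_n>j_n^{*})\to0$. Hence on the event $E_n:=\{j_n^{*}/\kappa\leq\widehat{j}_n\leq j_n^{*}\}$, which has $P_0$-probability tending to one uniformly over $\mathcal{G}\cap\mathcal{F}$, it suffices to control $P_0(f_0\notin\mathcal{C}_{\widehat{j}_n},E_n)$, the complementary events contributing only $o(1)$.

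For a fixed $J$ the argument is as follows. Writing $\boldsymbol{\varepsilon}=(\varepsilon_1,\dotsc,\varepsilon_n)^T$ and using \eqref{eq:pmean}, the centred posterior mean satisfies $\mathrm{E}_J(\boldsymbol{\theta}|\boldsymbol{Y})-\mathrm{E}_0\mathrm{E}_J(\boldsymbol{\theta}|\boldsymbol{Y})=\boldsymbol{M}\boldsymbol{A}^T\boldsymbol{\varepsilon}$, so its frequentist covariance is $\sigma_0^2\boldsymbol{M}_0$ with $\boldsymbol{M}_0=\boldsymbol{M}(\boldsymbol{A}^T\boldsymbol{A})\boldsymbol{M}$. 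Since $\boldsymbol{A}^T\boldsymbol{A}\preceq\boldsymbol{A}^T\boldsymbol{A}+\boldsymbol{\Omega}^{-1}$ we get $\boldsymbol{M}_0\preceq\boldsymbol{M}$, hence $\boldsymbol{a}_J(x)^T\boldsymbol{M}_0\boldsymbol{a}_J(x)\leq\boldsymbol{a}_J(x)^T\boldsymbol{M}\boldsymbol{a}_J(x)$ pointwise; combined with $\widehat{\sigma}_J\geq\sigma_0(1-o_{P_0}(1))$ from Proposition \ref{prop:sigma}, the Bayesian band half-width dominates the frequentist standard deviation $\sigma_0[\boldsymbol{a}_J(x)^T\boldsymbol{M}_0\boldsymbol{a}_J(x)]^{1/2}$. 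Decomposing $f_0(x)-\mathrm{E}_J[f(x)|\boldsymbol{Y}]$ into the deterministic bias $f_0(x)-\mathrm{E}_0\mathrm{E}_J[f(x)|\boldsymbol{Y}]$ and the centred Gaussian term, the third assumption is exactly what renders the standardized bias — whose shrinkage part is of order $l_{1,J}\lambda_{\boldsymbol{A},J}^{-1/2}$ and whose approximation part is of order $\lambda_{\boldsymbol{A},J}^{1/2}h(J)$ — negligible relative to $w_{\gamma,J}\asymp(\log J)^{1/2}$ times $\inf_x\|\boldsymbol{a}_J(x)\|\lambda_{\boldsymbol{A},J}^{-1/2}$. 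Thus non-coverage forces the frequentist standardized supremum in \eqref{eq:p0} to exceed $w_{\gamma,J}(1-o_{P_0}(1))$, and the volume-of-tube bound \eqref{eq:tube} applied with $\boldsymbol{\Sigma}=\sigma_0^2\boldsymbol{M}_0$ (curve $\beta_{0,J}$) controls its tail by $\tfrac{|\beta_{0,J}|}{\pi}e^{-w^2/2}+2[1-\Phi(w)]$. By the second assumption $|\beta_J|\geq|\beta_{0,J}|$ together with the defining equation \eqref{eq:gamma}, this is at most $\gamma$, giving $P_0(f_0\notin\mathcal{C}_J)\leq\gamma+o(1)$ for each fixed $J$.

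The main obstacle is transferring this fixed-$J$ conclusion to the data-driven $\widehat{j}_n$: the tail bound above is a distributional statement for each $J$ and cannot simply be evaluated at a random index correlated with $\boldsymbol{\varepsilon}$, while a crude union bound over the $\asymp j_n^{*}(1-\kappa^{-1})$ integers in $E_n$ would multiply $\gamma$ by a diverging factor. To handle this I would, on $E_n$, use the Lepski stopping inequality \eqref{eq:optimalj} with $i=j_n^{*}>\widehat{j}_n$ to control $\|\mathrm{E}_{\widehat{j}_n}(f|\boldsymbol{Y})-\mathrm{E}_{j_n^{*}}(f|\boldsymbol{Y})\|_\infty$ by $\tau\widehat{\sigma}_{j_n^{*}}l_{1,j_n^{*}}(\log j_n^{*}/\lambda_{\boldsymbol{A},j_n^{*}})^{1/2}$, and similarly compare the pointwise standard deviations across the bounded ratio $[j_n^{*}/\kappa,j_n^{*}]$, thereby replacing the random-scale standardized supremum by the one at the deterministic oracle $j_n^{*}$ up to a gap absorbed by the slack from $|\beta_J|\geq|\beta_{0,J}|$ and from the first part of the third assumption. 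The fixed-$J$ volume-of-tube bound may then be invoked at the single deterministic $j_n^{*}$, yielding $P_0(f_0\notin\mathcal{C}_{\widehat{j}_n},E_n)\leq\gamma+o(1)$ and hence the claim. I expect the delicate accounting to lie precisely in this transfer — showing that the oracle-reduction gap and the discrepancy between the posterior variance $\boldsymbol{M}$ and the sampling variance $\boldsymbol{M}_0$ are genuinely of smaller order than $w_{\gamma,j_n^{*}}$ times the pointwise standard deviation, uniformly in $x$ — since it is here that the constant $\gamma$ must be preserved rather than merely bounded up to a multiplicative constant.
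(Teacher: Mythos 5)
Your first two paragraphs reproduce the paper's fixed-$J$ argument almost exactly (the same decomposition into bias plus centred Gaussian term, $\boldsymbol{M}_0=\boldsymbol{M}-\boldsymbol{M\Omega}^{-1}\boldsymbol{M}\leq\boldsymbol{M}$ giving the variance-ratio bound $\sigma_0/\widehat{\sigma}_{\widehat{j}_n}$, Lemma \ref{lem:pbias} plus assumption 3 to kill the standardized bias against $w_{\gamma,J}\gtrsim(\log J)^{1/2}$ from assumption 2, and the tube bound \eqref{eq:tube} with the curve $\beta_{0,J}$ together with $|\beta_{0,J}|\leq|\beta_J|$ and \eqref{eq:gamma}). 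Where you diverge is the random-index transfer, and your proposed mechanism there has a genuine quantitative gap. The Lepski stopping inequality \eqref{eq:optimalj} at $i=j_n^{*}$ controls the recentring error only up to $\tau\widehat{\sigma}_{j_n^{*}}l_{1,j_n^{*}}(\log j_n^{*}/\lambda_{\boldsymbol{A},j_n^{*}})^{1/2}$, which is of \emph{exactly the same order} as the band half-width $w_{\gamma,\widehat{j}_n}\sqrt{\widetilde{V}_{\widehat{j}_n}(x)}\asymp(\log j_n^{*})^{1/2}\,\|\boldsymbol{a}_{\widehat{j}_n}(x)\|\,\lambda_{\boldsymbol{A},\widehat{j}_n}^{-1/2}$ (for B-splines both are $\asymp\sqrt{j_n^{*}\log j_n^{*}/n}$), and it carries the large constant $\tau$. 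Assumption 3 only makes the \emph{bias} terms $l_{1}\lambda^{-1/2}+\lambda^{1/2}h$ negligible relative to $w$; it gives you no slack of order $w$ itself, and neither does $|\beta_J|\geq|\beta_{0,J}|$, which is a one-sided comparison of arc lengths, not a multiplicative margin. So the oracle reduction shifts the effective threshold from $w_{\gamma,j_n^{*}}$ to $(1-c)w_{\gamma,j_n^{*}}$ for a fixed $c>0$ (and a further constant-factor distortion from comparing $\widetilde{V}_{\widehat{j}_n}$ with $\widetilde{V}_{j_n^{*}}$ and $w_{\gamma,\widehat{j}_n}$ with $w_{\gamma,j_n^{*}}$ across the range $[j_n^{*}/\kappa,j_n^{*}]$, where ratios are bounded but not $1+o(1)$). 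Since $e^{-w_{\gamma,j}^2/2}\asymp\gamma/|\beta_j|$ with $|\beta_{j_n^{*}}|\asymp j_n^{*}\to\infty$, the tube bound at threshold $(1-c)w$ is of order $j_n^{*}(\gamma/j_n^{*})^{(1-c)^2}=\gamma^{(1-c)^2}(j_n^{*})^{2c-c^2}\to\infty$: the exact constant $\gamma$ is destroyed, which is precisely the delicate accounting you flagged but did not resolve.

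The paper sidesteps this with a device you ruled out too quickly. It does not do a union bound over $j\in[j_n^{*}/\kappa,j_n^{*}]$ (which, as you correctly note, would multiply $\gamma$ by a diverging factor), nor does it reduce to the single oracle index. Instead, in \eqref{eq:total} it partitions on the value of $\widehat{j}_n$ and writes the main term as
\begin{align*}
\sum_{j=j_n^{*}/\kappa}^{j_n^{*}}P_0\left[\sup_{0\leq x\leq 1}\frac{|\widetilde{f}_{j}(x)-\mathrm{E}_0\widetilde{f}_{j}(x)|}{\sqrt{\mathrm{Var}_0\widetilde{f}_{j}(x)}}>w_{\gamma,j}\,\middle|\,\widehat{j}_n=j\right]P_0(\widehat{j}_n=j)
\leq\max_{j_n^{*}/\kappa\leq j\leq j_n^{*}}\left(\frac{|\beta_{0,j}|}{\pi}e^{-w_{\gamma,j}^2/2}+2[1-\Phi(w_{\gamma,j})]\right),
\end{align*}
a \emph{weighted average} bounded by the maximum of the fixed-$j$ tube bounds times $\sum_j P_0(\widehat{j}_n=j)\leq1$. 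Each fixed-$j$ bound is already $\leq\gamma$, by $|\beta_{0,j}|\leq|\beta_j|$ and the defining equation \eqref{eq:gamma} for $w_{\gamma,j}$, so the constant $\gamma$ survives with no inflation and no oracle-gap loss. This weighted-sum-over-the-localized-range argument is the missing idea in your transfer step; if you adopt it, the rest of your proposal matches the paper's proof (with the remaining $o(1)$ terms handled, as you indicate, by Proposition \ref{prop:sigma}, Proposition \ref{lem:upper} and assumption 1).
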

The inequality in the coverage statement dictates that this is a conservative procedure. The 3 conditions are satisfied for the most commonly used basis systems and we verify them using B-spline and wavelets in the next two sections. The first condition says that the Bayes Lepski's rule cannot stop too late and choose $J$ that is too small as to make the approximation bias unmanageable. As we will discuss in detail later in Section \ref{sec:uncertain}, this give rise to the counterintuitive notion that we need a certain amount of error in order to quantify uncertainty honestly. The second condition says that the arc length under our model/posterior must be longer than the one under the true distribution, so that credible level under the posterior can be translated to confidence level when coverage is measured. At the same time, the model arc length must be larger (up to some constant) than $J\in\mathcal{J}$ so that in the last condition, the size of the quantile $w_{\gamma,J}$ is of the order larger than the standardized approximation bias, where the seemingly technical appearance is the result of reducing the aforementioned condition to one formulated by the basis components only.

Therefore, if the Bayes Lepski rule do not stop too early or too late, we have the following result on the size of the credible band generated through the volume of tube method.
\begin{corollary}[Length of radius function]\label{cor:radius}
Let $r(x):=w_{\gamma,\widehat{j}_n}\sqrt{\mathrm{Var}_{\widehat{j}_n}[f(x)|\boldsymbol{Y}]}$ be the radius function for the variable width credible band. Let assumption 1 of Theorem \ref{th:credible} hold for some set $\mathcal{F}$ and constant $\kappa\geq1$ and assume in addition that $|\beta_J|\asymp J$ for any $J\in\mathcal{J}$. Then as $n\rightarrow\infty$,
\begin{align*}
\inf_{f_0\in\mathcal{G}\bigcap\mathcal{F}}P_0\left[\|\boldsymbol{a}_{j_n^{*}/\kappa}(x)\|\sqrt{\frac{\log{(j_n^{*})}}{\lambda_{\boldsymbol{A},j_n^{*}/\kappa}}}\lesssim r(x)\lesssim\|\boldsymbol{a}_{j_n^{*}}(x)\|\sqrt{\frac{\log{(j_n^{*})}}{\lambda_{\boldsymbol{A},j_n^{*}}}},\forall x\in[0,1]\right]\rightarrow1.
\end{align*}
\end{corollary}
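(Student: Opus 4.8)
The plan is to work throughout on the localization event $E_n=\{j_n^{*}/\kappa\le\widehat j_n\le j_n^{*}\}$ and to show that on $E_n$ the radius function $r(x)$ is, uniformly in $x$, of the exact order $\|\boldsymbol a_{\widehat j_n}(x)\|\sqrt{\log j_n^{*}/\lambda_{\boldsymbol A,\widehat j_n}}$; the two displayed envelopes then follow by monotonicity in $J$ after replacing the random index $\widehat j_n$ by the deterministic endpoints $j_n^{*}/\kappa$ and $j_n^{*}$. First I would establish that $P_0(E_n)\to1$ uniformly over $f_0\in\mathcal G\cap\mathcal F$: the lower inclusion $\widehat j_n\ge j_n^{*}/\kappa$ is precisely Assumption~1 of Theorem~\ref{th:credible}, while the upper inclusion $\widehat j_n\le j_n^{*}$ comes from Proposition~\ref{lem:upper}, whose bound $(j_n^{*})^{-(\mu-2)}\to0$ since $\mu>2$ and $j_n^{*}\to\infty$; intersecting the two events preserves the uniform $P_0$-probability tending to one.

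Next I would pin down the two factors of $r(x)=w_{\gamma,\widehat j_n}\sqrt{\mathrm{Var}_{\widehat j_n}[f(x)|\boldsymbol Y]}$ separately. For the posterior standard deviation, conjugacy gives $\mathrm{Var}_J[f(x)|\boldsymbol Y]=\widehat\sigma_J^{2}\,\boldsymbol a_J(x)^T\boldsymbol M\boldsymbol a_J(x)$ with $\boldsymbol M=(\boldsymbol A^T\boldsymbol A+\boldsymbol\Omega^{-1})^{-1}$. Combining the prior eigenvalue bounds \eqref{eq:prior} with the Gram-matrix bounds \eqref{eq:ceb} (via Weyl's inequality, using $\lambda_{\boldsymbol A,J}\to\infty$) shows $\lambda_{\mathrm{min}}(\boldsymbol M)\asymp\lambda_{\mathrm{max}}(\boldsymbol M)\asymp\lambda_{\boldsymbol A,J}^{-1}$, so that $\boldsymbol a_J(x)^T\boldsymbol M\boldsymbol a_J(x)\asymp\|\boldsymbol a_J(x)\|^{2}/\lambda_{\boldsymbol A,J}$ with constants free of $x$. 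Together with Proposition~\ref{prop:sigma}, which forces $\widehat\sigma_J$ to lie within $o(1)$ of $\sigma_0>0$ and hence to be bounded between positive constants, this yields $\sqrt{\mathrm{Var}_J[f(x)|\boldsymbol Y]}\asymp\|\boldsymbol a_J(x)\|/\sqrt{\lambda_{\boldsymbol A,J}}$ uniformly in $x$. For the quantile I would invert the defining equation \eqref{eq:gamma}: bounding the Gaussian tail by $1-\Phi(w)\le\tfrac12 e^{-w^2/2}$ sandwiches $\gamma$ between $(|\beta_J|/\pi)e^{-w^2/2}$ and $(|\beta_J|/\pi+1)e^{-w^2/2}$, and since $|\beta_J|\asymp J\to\infty$ this forces $w_{\gamma,J}^{2}\asymp\log J$, hence $w_{\gamma,J}\asymp\sqrt{\log J}$.

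Assembling the pieces on $E_n$ and using $\log\widehat j_n\asymp\log j_n^{*}$ (valid because $j_n^{*}/\kappa\le\widehat j_n\le j_n^{*}$ with $\kappa$ fixed and $j_n^{*}\to\infty$) yields $r(x)\asymp\|\boldsymbol a_{\widehat j_n}(x)\|\sqrt{\log j_n^{*}/\lambda_{\boldsymbol A,\widehat j_n}}$ uniformly in $x$. Finally I would translate the random index to the endpoints: since $\lambda_{\boldsymbol A,J}$ is decreasing in $J$, the inclusion $E_n$ gives $\lambda_{\boldsymbol A,j_n^{*}}\le\lambda_{\boldsymbol A,\widehat j_n}\le\lambda_{\boldsymbol A,j_n^{*}/\kappa}$, and invoking the (up to constants) monotonicity of $J\mapsto\|\boldsymbol a_J(x)\|$ that holds for the bases under consideration gives $\|\boldsymbol a_{j_n^{*}/\kappa}(x)\|\lesssim\|\boldsymbol a_{\widehat j_n}(x)\|\lesssim\|\boldsymbol a_{j_n^{*}}(x)\|$; combining these produces the stated lower and upper bounds, all on an event whose $P_0$-probability tends to one uniformly over $\mathcal G\cap\mathcal F$.

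I expect the main obstacle to be this last step: controlling the basis norm $\|\boldsymbol a_J(x)\|$ at the data-driven index $\widehat j_n$ in terms of the deterministic indices $j_n^{*}/\kappa$ and $j_n^{*}$, uniformly in $x\in[0,1]$. Unlike $\lambda_{\boldsymbol A,J}$, whose monotonicity in $J$ is assumed, the comparability of $\|\boldsymbol a_J(x)\|$ across nearby $J$ is not among the listed hypotheses and must be extracted from the structure of the basis (it is $\asymp1$ for B-splines and $\asymp\sqrt J$ for wavelets, both monotone up to constants), which is where the abstract statement leans on the concrete verifications of the following sections. A secondary technical point is that the consistency of $\widehat\sigma_J$ and the order of $w_{\gamma,J}$, both established for fixed $J$, must transfer to the random index $\widehat j_n$; this is handled by a union bound over $\mathcal J$, whose cardinality is only polynomial in $n$ while the deviation probabilities in Proposition~\ref{prop:sigma} are exponentially small in $J$.
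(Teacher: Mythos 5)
Your proposal is correct and follows essentially the same route as the paper's proof: the same localization event $\{j_n^{*}/\kappa\leq\widehat{j}_n\leq j_n^{*}\}$ (assumption 1 combined with Proposition \ref{lem:upper}), the same inversion of \eqref{eq:gamma} via Gaussian tail bounds to get $w_{\gamma,\widehat{j}_n}\asymp(\log{\widehat{j}_n})^{1/2}$ from $|\beta_J|\asymp J$, the same eigenvalue sandwich \eqref{eq:AAO} for $\mathrm{Var}_{\widehat{j}_n}[f(x)|\boldsymbol{Y}]$ with $\widehat{\sigma}_{\widehat{j}_n}$ controlled by Proposition \ref{prop:sigma}, and the same concluding law-of-total-probability/union-bound step. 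The one obstacle you flag, monotonicity of $\|\boldsymbol{a}_J(x)\|$ in $J$, the paper dispatches by treating the basis as a nested sequence so that $\|\boldsymbol{a}_J(x)\|^2=\sum_{j\leq J}a_j(x)^2$ is increasing ``by definition,'' which is exactly the resolution you anticipate and is harmless in the concrete B-spline and wavelet cases via Lemmas \ref{lem:b2} and \ref{lem:w2}.
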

Since $\kappa$ is a constant not depending on $n$, the upper and lower bounds will have the same asymptotic order, this further implies that the radius cannot concentrate to quickly to the $0$ function, and it must do so at least at a certain rate indicated above.

To showcase the applicability of the methods discussed, let us consider two special but widely used classes of basis functions, with the B-splines as the non-orthonormal example and the wavelets serving as the canonical orthonormal example.

\section{Application I: B-splines bases}\label{sec:bspline}
Suppose we project $f$ onto the $J$-dimensional space of polynomial splines spanned by B-splines basis functions $B_{j,q}(x),j=1,\dotsc,J$. Each of this basis is a piecewise polynomial defined on the knot sequence $\mathcal{T}=\{0=t_0<t_1,\dotsc,t_N<t_{N+1}=1\}$, such that $B_{j,q}$ restricted to a knot interval is a polynomial of order $q$ (or degree $q-1$), and $B_{j,q}$ is $q-2$ times differentiable at the knot points. We select the knots such that their distribution is quasi-uniform, in the sense that $\max_{1\leq k\leq N+1}(t_k-t_{k-1})\lesssim\min_{1\leq k\leq N+1}(t_k-t_{k-1})$, i.e., the max knot increment is of the same order as the min knot increment. Here $N$ is the number of interior knots and it is related to the number of basis by $J=N+q$.

Let $\boldsymbol{b}_{J,q}(x)=(B_{1,q}(x),\dotsc,B_{J,q}(x))^T$. Our hierarchical prior can then be formulated as $f(x)=\boldsymbol{b}_{J,q}(x)^T\boldsymbol{\theta}$ with $\boldsymbol{\theta}|\sigma\sim\mathrm{N}_J(\boldsymbol{\eta},\sigma^2\boldsymbol{\Omega})$ and $\sigma^2$ estimated by empirical Bayes \eqref{eq:sigma}. Construct the B-spline basis matrix as $\boldsymbol{B}=(\boldsymbol{b}_{J,q}(X_1)^T,\dotsc,\boldsymbol{b}_{J,q}(X_n))^T$. By Theorem 4.18 of \citep{lschumaker}, we know that B-splines are linearly independent and hence $\boldsymbol{B}^T\boldsymbol{B}$ is invertible. For a fixed cumulative distribution function $F(x)$ with positive and continuous density on $[0,1]$, let us choose the covariates such that
\begin{align*}
\sup_{x\in[0,1]}|F_n(x)-F(x)|=o(N^{-1}),
\end{align*}
where $F_n(x)=n^{-1}\sum_{i=1}^n\mathbbm{1}_{[0,X_i]}(x)$ is the empirical distribution of $(X_1,\dotsc,X_n)$. Then by Lemma A.9 of \citep{yoo2016}, there exist constants $0<C_1\leq C_2<\infty$ such that
\begin{align}\label{eq:BB}
C_1(n/J)\leq\lambda_{\mathrm{min}}(\boldsymbol{B}^T\boldsymbol{B})\leq\lambda_{\mathrm{max}}(\boldsymbol{B}^T\boldsymbol{B})\leq C_2(n/J),
\end{align}
and hence $\lambda_{\boldsymbol{B},J}=n/J$ in this case. Now since $\sum_{j=1}^JB_{j,q}(x)=1$ (partition of unity) and $0\leq B_{j,q}(x)\leq1$ for any $j$ and $x$, it follows that $l_{1,J}=1$. Therefore if $J\leq n$, then $\lambda_{\boldsymbol{B},J}\geq l_{1,J}^2$. For any $X_i\in[0,1]$, we can always find a $k$ such that $X_i\in[t_{k-1},t_k]$, and since B-splines have compact support, this implies that only $q$-consecutive $B_{k,q}(X_i),\dotsc,B_{k+q-1,q}(X_i)$ are nonzero. Therefore if $|u-v|>q$, then $B_{u,q}(X_i)B_{v,q}(X_i)=0$ and hence $(\boldsymbol{B}^T\boldsymbol{B})_{u,v}=0$. As a result, $\boldsymbol{B}^T\boldsymbol{B}$ is $q$-banded.

By Theorem 6.18 of \citep{lschumaker}, there is a linear operator $K_J$ that maps $f$ onto the space of $J$-dimensional polynomial splines, and $K_J$ reproduces polynomials of order $q$. Hence we choose the lower point in the candidate set as $j_{\mathrm{min}}=(n/\log{n})^{1/(2q+1)}$, and this further implies that we adapt $\alpha$ up to $q$. Thus, we estimate the optimal number of B-splines bases as (for some large enough $\tau$)
\begin{align*}
\widehat{j}_n=\min\left\{j\in\mathcal{J}:\|\mathrm{E}_j(f|\boldsymbol{Y})-\mathrm{E}_i(f|\boldsymbol{Y})\|_\infty\leq\tau\widehat{\sigma}_i\sqrt{\frac{i\log{i}}{n}},\forall i>j,i\in\mathcal{J}\right\}.
\end{align*}

Let $\mathcal{H}^{\alpha}$ denote the H\"{o}lder space of order $\alpha>0$ consisting of functions $g:[0,1]\rightarrow\mathbb{R}$ such that $\|g\|_{\mathcal{H}^{\alpha}}<\infty$, where the H\"{o}lder norm is defined as
\begin{align*}
\|g\|_{\mathcal{H}^{\alpha}}=\max_{0\leq k<[\alpha]}\sup_{x\in[0,1]}|g^{(k)}(x)|+\sup_{x,y\in[0,1]:x\neq y}\frac{|g^{([\alpha])}(x)-g^{([\alpha])}(y)|}{|x-y|^{\alpha-[\alpha]}},
\end{align*}
and $[\alpha]$ is the integer part of $\alpha$. We then take $\mathcal{G}=\{g:\|g\|_{\mathcal{H}^{\alpha}}\leq R\}=:\mathcal{H}^{\alpha}(R)$ to be the H\"{o}lder ball of radius $R>0$. By Theorem 22 in Chapter XII of \citep{deBoor} or Theorem 6.31 of \citep{lschumaker}, we know that for any $g\in\mathcal{H}^{\alpha}(R)$, there exists a $\boldsymbol{\theta}_0\in\mathbb{R}^J$ and a constant $C_q>0$ depending on $q$ such that
\begin{align}
\|g-\boldsymbol{b}_{J,q}(\cdot)^T\boldsymbol{\theta}_0\|_\infty\leq C_q\|g\|_{\mathcal{H}^{\alpha}}J^{-\alpha}.
\end{align}
Thus $h(x)=x^{-\alpha}$ for the B-splines case and it follows that $h(j_{\mathrm{min}})=(\log{n}/n)^{\alpha/(2q+1)}\to0$ as $n\to\infty$. By (7.2) of \citep{yoo2016}, we have $\|\boldsymbol{\theta}_0\|_{\infty}<\infty$. Suppose the true regression function $f_0\in\mathcal{H}^{\alpha}(R)$ for some unknown $\alpha>0$. Then $j_n^{*}$ defined in \eqref{eq:j0} balances the posterior mean bias $\|f_0\|_{\mathcal{H}^{\alpha}}J^{-\alpha}$ on one side with the posterior standard deviation $\sqrt{J\log{(J)}/n}$ on the other, and we deduce that $j_n^{*}\asymp\|f_0\|_{\mathcal{H}^{\alpha}}^{2/(2\alpha+1)}(n/\log{n})^{1/(2\alpha+1)}$. Therefore Theorem \ref{th:rate} when specializing to B-splines bases yield
\begin{align}\label{eq:Brate}
\sup_{0<\alpha\leq q}\sup_{f_0\in\mathcal{H}^{\alpha}(R)}\mathrm{E}_0\Pi\left[\|f-f_0\|_\infty>\xi(\log{n}/n)^{\alpha/(2\alpha+1)}\middle|\boldsymbol{Y}\right]\lesssim(\log{n}/n)^{M}
\end{align}
for some constants $R,\xi,M>0$. When compared to Theorem 4.4 of \citep{yoo2016}, our result is fully rate adaptive and the $\xi$ appearing in the radius is a constant rather than some arbitrary sequence going to infinity.

We now turn to the problem of constructing credible bands. Using the volume-of-tube method discussed in the previous section, we construct
\begin{align}\label{eq:bsplineband}
\mathcal{B}_n:=\left\{f:|f(x)-\mathrm{E}_{\widehat{j}_n}[f(x)|\boldsymbol{Y}]|\leq w_{\gamma,\widehat{j}_n}\sqrt{\mathrm{Var}_{\widehat{j}_n}[f(x)|\boldsymbol{Y}]},\forall x\in[0,1]\right\},
\end{align}
where the posterior mean and variance with the Bayes Lepski's stopping rule $J=\widehat{j}_n$ is
\begin{align}
\mathrm{E}_{\widehat{j}_n}[f(x)|\boldsymbol{Y}]&=\boldsymbol{b}_{\widehat{j}_n,q}(x)^T(\boldsymbol{B}^T\boldsymbol{B}+\boldsymbol{\Omega}^{-1})^{-1}(\boldsymbol{B}^T\boldsymbol{Y}+\boldsymbol{\Omega}^{-1}\boldsymbol{\eta}),\\ \mathrm{Var}_{\widehat{j}_n}[f(x)|\boldsymbol{Y}]&=\widehat{\sigma}_{\widehat{j}_n}^2\boldsymbol{b}_{\widehat{j}_n,q}(x)^T(\boldsymbol{B}^T\boldsymbol{B}+\boldsymbol{\Omega}^{-1})^{-1}\boldsymbol{b}_{\widehat{j}_n,q}(x),
\end{align}
and we choose $w_{\gamma,\widehat{j}_n}$ such that it solves $\gamma=|\beta_{\widehat{j}_n}|\pi^{-1}e^{-w_{\gamma,\widehat{j}_n}^2/2}+2[1-\Phi(w_{\gamma,\widehat{j}_n})]$. The expression for $|\beta_{\widehat{j}_n}|$ can be found by substituting $\boldsymbol{b}_{\widehat{j}_n,q}(x)$ for $\boldsymbol{a}_{\widehat{j}_n}(x)$ and $\boldsymbol{B}$ for $\boldsymbol{A}$ in \eqref{eq:beta}. For B-splines however, we can further use (8) of Chapter X in \citep{deBoor} to write explicitly $d\boldsymbol{b}_{\widehat{j}_n,q}(x)/dx$ and this in turn gives
\begin{align}\label{eq:betaB}
|\beta_{\widehat{j}_n}|=\int_0^1\frac{[\boldsymbol{b}_{\widehat{j}_n,q}(x)^T\boldsymbol{M}\boldsymbol{R}^T\boldsymbol{M}\boldsymbol{RM}\boldsymbol{b}_{\widehat{j}_n,q}(x)]^{1/2}}
{[\boldsymbol{b}_{\widehat{j}_n,q}(x)^T\boldsymbol{M}\boldsymbol{b}_{\widehat{j}_n,q}(x)]^{3/2}}dx
\end{align}
for $\boldsymbol{M}:=(\boldsymbol{B}^T\boldsymbol{B}+\boldsymbol{\Omega}^{-1})^{-1}$ and $\boldsymbol{R}:=\boldsymbol{Wb}_{\widehat{j}_n,q-1}(x)\boldsymbol{b}_{\widehat{j}_n,q}(x)^T-\boldsymbol{b}_{\widehat{j}_n,q}(x)\boldsymbol{b}_{\widehat{j}_n,q-1}(x)^T\boldsymbol{W}^T$, where $\boldsymbol{W}$, a matrix of dimension $\widehat{j}_n\times(\widehat{j}_n-1)$, is $q-1$ times
\begin{align}\label{eq:matrix}
\begin{pmatrix}
-(t_1-t_{2-q})^{-1}&0&0&\cdots&0&0\\
(t_1-t_{2-q})^{-1}& -(t_2-t_{3-q})^{-1}&0&\cdots&0&0\\
0&(t_2-t_{3-q})^{-1}&-(t_3-t_{4-q})^{-1}&\cdots&0&0\\
\vdots&\vdots&\vdots&\ddots&\vdots&\vdots\\
0&0&0&\cdots&0&(t_{\widehat{j}_n-1}-t_{\widehat{j}_n-q})^{-1}
\end{pmatrix}.
\end{align}
To apply Theorem \ref{th:credible} and conclude that $\mathcal{B}_n$ has high coverage probability in the frequentist sense, we proceed to verify the 3 conditions needed.

Let us first verify the last condition, which dictates that the quantile $w_{\gamma,\widehat{j}_n}$ is of the order larger than the standardized posterior mean bias. Now by Lemma \ref{lem:b2}, we will have $\inf_{x\in[0,1]}\|\boldsymbol{b}_{\widehat{j}_n,q}(x)\|\geq q^{-1}$. Note that under the event $\{j_n^{*}/\kappa\leq\widehat{j}_n\leq j_n^{*}\}$ with $\kappa\geq1$ coming from the first assumption, we deduce that $(\widehat{j}_n/n)^{1/2}\asymp\widehat{j}_n^{-\alpha}$ and $\widehat{j}/n=o_{P_0}(1)$. Thus if we intersect with the aforementioned event, the third condition reads
\begin{align*}
(\log{\widehat{j}_n})^{-1/2}q\left[\sqrt{\frac{\widehat{j}_n}{n}}+\sqrt{\frac{n}{\widehat{j}_n}}\widehat{j}_n^{-\alpha}\right]\lesssim(\log{n})^{-1/2}(1+o_{P_0}(1))=o_{P_0}(1),
\end{align*}
as $n\rightarrow\infty$. Now for the second arc length condition, Lemmas \ref{lem:cgamma} and \ref{lem:curve} below jointly say that B-splines do indeed satisfy the arc length inequality for any $J\in\mathcal{J}$ as $n\rightarrow\infty$.

\begin{lemma}\label{lem:cgamma}
Let $|\beta_J|=\int_0^1\|\beta_J^{'}(x)\|dx$ be the arc length of the curve $\beta_J:[0,1]\rightarrow S^{J-1}$ given in \eqref{eq:betaB}. Then as $n\rightarrow\infty$,
\begin{align*}
|\beta_J|\asymp J,\quad\text{for any $J\in\mathcal{J}$}.
\end{align*}
\end{lemma}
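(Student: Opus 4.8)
The plan is to first strip the integrand in \eqref{eq:betaB} down to a transparent geometric quantity, then reduce the $\boldsymbol{M}$-weighted arc length to the Euclidean arc length of a \emph{normalized B-spline curve}, which is a purely deterministic object independent of $\boldsymbol{M}$, $n$ and $\boldsymbol{\Omega}$, and finally bound that object above and below by $J$. Write $\boldsymbol{b}:=\boldsymbol{b}_{J,q}(x)$ and $\dot{\boldsymbol{b}}:=\boldsymbol{W}\boldsymbol{b}_{J,q-1}(x)$, which is $d\boldsymbol{b}_{J,q}/dx$ by the de Boor derivative formula used above, so that $\boldsymbol{R}=\dot{\boldsymbol{b}}\boldsymbol{b}^{T}-\boldsymbol{b}\dot{\boldsymbol{b}}^{T}$ is antisymmetric of rank two. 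Setting $a=\boldsymbol{b}^{T}\boldsymbol{M}\boldsymbol{b}$, $c=\boldsymbol{b}^{T}\boldsymbol{M}\dot{\boldsymbol{b}}$ and $d=\dot{\boldsymbol{b}}^{T}\boldsymbol{M}\dot{\boldsymbol{b}}$, a direct computation gives $\boldsymbol{R}\boldsymbol{M}\boldsymbol{b}=a\dot{\boldsymbol{b}}-c\boldsymbol{b}$, whence $\boldsymbol{b}^{T}\boldsymbol{M}\boldsymbol{R}^{T}\boldsymbol{M}\boldsymbol{R}\boldsymbol{M}\boldsymbol{b}=\|\boldsymbol{M}^{1/2}\boldsymbol{R}\boldsymbol{M}\boldsymbol{b}\|^{2}=a(ad-c^{2})$. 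Thus the integrand in \eqref{eq:betaB} collapses to $\sqrt{ad-c^{2}}/a$, which is exactly the speed of the spherical curve $\boldsymbol{M}^{1/2}\boldsymbol{b}/\|\boldsymbol{M}^{1/2}\boldsymbol{b}\|$, as it should be.

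Next I would remove the matrix $\boldsymbol{M}$. From \eqref{eq:BB}, \eqref{eq:prior} and $n/J\to\infty$, the eigenvalues of $\boldsymbol{M}=(\boldsymbol{B}^{T}\boldsymbol{B}+\boldsymbol{\Omega}^{-1})^{-1}$ all lie within constant multiples of $J/n$, so the condition number of $\boldsymbol{M}$ is bounded uniformly in $n$ and $J\in\mathcal{J}$. Viewing $ad-c^{2}=\det(C^{T}\boldsymbol{M}C)$ with $C=[\,\boldsymbol{b}\mid\dot{\boldsymbol{b}}\,]$, I would factor $C=E\tilde{C}$ through an orthonormal basis $E$ of $\mathrm{span}(\boldsymbol{b},\dot{\boldsymbol{b}})$, so that $\det(C^{T}\boldsymbol{M}C)=(\det\tilde C)^{2}\det(E^{T}\boldsymbol{M}E)$ and $\det(C^{T}C)=(\det\tilde C)^{2}=\|\boldsymbol{b}\|^{2}\|\dot{\boldsymbol{b}}\|^{2}-(\boldsymbol{b}^{T}\dot{\boldsymbol{b}})^{2}$. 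By Cauchy interlacing the two eigenvalues of the compression $E^{T}\boldsymbol{M}E$ lie in $[\lambda_{\mathrm{min}}(\boldsymbol{M}),\lambda_{\mathrm{max}}(\boldsymbol{M})]\asymp J/n$, so $ad-c^{2}\asymp(J/n)^{2}[\,\|\boldsymbol{b}\|^{2}\|\dot{\boldsymbol{b}}\|^{2}-(\boldsymbol{b}^{T}\dot{\boldsymbol{b}})^{2}\,]$ while $a\asymp(J/n)\|\boldsymbol{b}\|^{2}$. The factors of $J/n$ cancel and the integrand is comparable, uniformly in $x$ and $n$, to $g(x):=\|\tfrac{d}{dx}(\boldsymbol{b}/\|\boldsymbol{b}\|)\|$, the Euclidean speed of the normalized curve $\nu:=\boldsymbol{b}/\|\boldsymbol{b}\|$. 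It therefore suffices to prove the deterministic fact $\int_{0}^{1}g\,dx\asymp J$.

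For the upper bound, $g\le\|\dot{\boldsymbol{b}}\|/\|\boldsymbol{b}\|$; by quasi-uniformity the knot spacings are $\asymp J^{-1}$, so each $B_{j,q}'$ is $O(J)$ and at most $q$ of them are nonzero at any $x$, giving $\|\dot{\boldsymbol{b}}\|\lesssim J$, while $\|\boldsymbol{b}\|\ge q^{-1}$ by Lemma \ref{lem:b2}; hence $g\lesssim J$ and $\int_{0}^{1}g\lesssim J$.

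The lower bound is the main obstacle, because estimating $ad-c^{2}$ (equivalently the area term $\|\boldsymbol{b}\|^{2}\|\dot{\boldsymbol{b}}\|^{2}-(\boldsymbol{b}^{T}\dot{\boldsymbol{b}})^{2}$) pointwise could a priori be ruined by cancellation, since $\boldsymbol{b}^{T}\dot{\boldsymbol{b}}=\tfrac12(\|\boldsymbol{b}\|^{2})'$ is itself of order $J$. To sidestep this I would not control the area term directly but pass to per-coordinate total variation. Using compact support, $\nu'$ has at most $q$ nonzero entries, so $\|\nu'\|_{1}\le\sqrt{q}\,\|\nu'\|=\sqrt{q}\,g$ and hence $\int_{0}^{1}g\,dx\ge q^{-1/2}\sum_{j}\int_{0}^{1}|\nu_{j}'|\,dx$. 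For each interior index $j$ the coordinate $\nu_{j}=B_{j,q}/\|\boldsymbol{b}\|$ vanishes at the two ends of $\mathrm{supp}\,B_{j,q}$ and attains a peak of at least $\max_{x}B_{j,q}(x)\ge c_{q}>0$, since $\|\boldsymbol{b}\|\le1$ (as $\sum_j B_{j,q}^2\le(\sum_j B_{j,q})^2=1$) and B-spline peak heights are bounded below by a constant depending only on $q$ and the quasi-uniformity constant; therefore its total variation obeys $\int_{0}^{1}|\nu_{j}'|\ge2c_{q}$. Summing over the $\asymp J$ interior indices (the $O(q)$ boundary indices being negligible) yields $\int_{0}^{1}g\gtrsim J$, which together with the upper bound gives $|\beta_{J}|\asymp J$ uniformly over $J\in\mathcal{J}$.
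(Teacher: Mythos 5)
Your proposal is correct, and it reaches the two-sided bound by a genuinely different route than the paper. The paper never uses your exact identity: it bounds the numerator and denominator of \eqref{eq:betaB} separately, the denominator through the eigenvalue bounds \eqref{eq:BBO} and Lemma \ref{lem:b2}, and the numerator through Lemma \ref{lem:quad} and Corollary \ref{cor:quad}, whose lower bound rests on an explicit test vector $\boldsymbol{U}(x)$ built from the partition of unity so that $\langle\boldsymbol{U}(x),\boldsymbol{b}_{J,q}(x)\rangle=0$ while $\langle\boldsymbol{U}(x),\boldsymbol{\dot{b}}_{J,q}(x)\rangle^2\gtrsim J^2$; this produces a pointwise bound on the integrand, which is then integrated. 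You instead exploit the rank-two antisymmetric structure of $\boldsymbol{R}$ to collapse the integrand exactly to $\sqrt{ad-c^2}/a$ (your computation $\boldsymbol{b}^T\boldsymbol{M}\boldsymbol{R}^T\boldsymbol{M}\boldsymbol{R}\boldsymbol{M}\boldsymbol{b}=a(ad-c^2)$ checks out), strip $\boldsymbol{M}$ by a $2\times2$ compression and interlacing argument using only the $O(1)$ condition number of $\boldsymbol{M}$ (valid by \eqref{eq:BB} and \eqref{eq:prior}), and reduce the lemma to the purely deterministic statement $\int_0^1\|\frac{d}{dx}(\boldsymbol{b}/\|\boldsymbol{b}\|)\|\,dx\asymp J$, whose lower bound you obtain by an integrated, coordinatewise total-variation argument instead of a pointwise one. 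The trade-offs: the paper's argument yields the stronger pointwise statement (spherical speed uniformly of order $J$), and its Lemma \ref{lem:quad} machinery is reused verbatim for Lemma \ref{lem:curve} and the wavelet Lemmas \ref{lem:cgammaw} and \ref{lem:curvew}, whereas your TV argument delivers only the integral bound — which is all the lemma asserts — but is notably more robust: it sidesteps the uniform positivity of $B_{i_x,q-1}(x)$ on which the paper's pointwise lower bound leans (a quantity that degenerates as $x$ approaches the knot at which $B_{i_x,q-1}$ vanishes), since you never evaluate anything at a single point. Two small housekeeping items: the one fact you assert without proof — that interior B-splines on quasi-uniform knots have peaks $\max_x B_{j,q}(x)\geq c_q>0$ — is the standard $L_\infty$-stability (condition number) of the B-spline basis applied to coordinate vectors (see Chapter XI of \citep{deBoor} or Theorem 4.41 of \citep{lschumaker}), so a citation suffices; and Lemma \ref{lem:b2} gives $\|\boldsymbol{b}\|\geq q^{-1/2}$ rather than $q^{-1}$, which is immaterial since you only need a constant lower bound.
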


\begin{lemma}\label{lem:curve}
There exist constants $\xi_1,\xi_2,\xi_3>0$ such that
\begin{align*}
|\beta_{0,J}|\leq\frac{\left[1-\xi_1(J/n)+\xi_2(J/n)^2\right]^{1/2}}{\left[1-\xi_3(J/n)\right]^{3/2}}|\beta_J|.
\end{align*}
Consequently for any $J\in\mathcal{J}$, $|\beta_{0,J}|\leq|\beta_J|$ as $n\to\infty$.
\end{lemma}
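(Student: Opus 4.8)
The plan is to reduce both arc lengths to elementary scalar integrals and then compare them through the exact relation between $\boldsymbol{M}=(\boldsymbol{B}^T\boldsymbol{B}+\boldsymbol{\Omega}^{-1})^{-1}$ and $\boldsymbol{M}_0=\boldsymbol{M}\boldsymbol{B}^T\boldsymbol{B}\boldsymbol{M}$. Since $\boldsymbol{B}^T\boldsymbol{B}=\boldsymbol{M}^{-1}-\boldsymbol{\Omega}^{-1}$, I would first record the identity $\boldsymbol{M}_0=\boldsymbol{M}-\boldsymbol{M}\boldsymbol{\Omega}^{-1}\boldsymbol{M}=\boldsymbol{M}^{1/2}(\boldsymbol{I}-\boldsymbol{D})\boldsymbol{M}^{1/2}$, where $\boldsymbol{D}:=\boldsymbol{M}^{1/2}\boldsymbol{\Omega}^{-1}\boldsymbol{M}^{1/2}\succeq\boldsymbol{0}$; in particular $\boldsymbol{M}_0\preceq\boldsymbol{M}$. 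Using \eqref{eq:prior} together with \eqref{eq:BB} (so that $\lambda_{\min}(\boldsymbol{M})\asymp\lambda_{\max}(\boldsymbol{M})\asymp J/n$), I would show that every eigenvalue of $\boldsymbol{D}$ lies in an interval $[\delta_1,\delta_2]$ with $\delta_1,\delta_2\asymp J/n$, so that $\boldsymbol{M}_0$ is a relative perturbation of $\boldsymbol{M}$ of size $J/n$.

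Next I would simplify the integrand of \eqref{eq:beta}. Writing $\boldsymbol{w}=\boldsymbol{M}^{1/2}\boldsymbol{a}_J(x)$, $\dot{\boldsymbol{w}}=\boldsymbol{M}^{1/2}\dot{\boldsymbol{a}}_J(x)$, and $p=\|\boldsymbol{w}\|^2$, $q=\langle\boldsymbol{w},\dot{\boldsymbol{w}}\rangle$, $s=\|\dot{\boldsymbol{w}}\|^2$, a direct computation with $\boldsymbol{R}=\dot{\boldsymbol{a}}_J\boldsymbol{a}_J^T-\boldsymbol{a}_J\dot{\boldsymbol{a}}_J^T$ collapses that integrand to $(ps-q^2)^{1/2}/p$, which is just the speed of the normalized curve $\boldsymbol{w}/\|\boldsymbol{w}\|$. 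The same computation with $\boldsymbol{M}$ replaced by $\boldsymbol{M}_0$ yields $(p_0s_0-q_0^2)^{1/2}/p_0$, where $p_0=\boldsymbol{w}^T\boldsymbol{P}\boldsymbol{w}$, $q_0=\boldsymbol{w}^T\boldsymbol{P}\dot{\boldsymbol{w}}$, $s_0=\dot{\boldsymbol{w}}^T\boldsymbol{P}\dot{\boldsymbol{w}}$ with $\boldsymbol{P}=\boldsymbol{I}-\boldsymbol{D}$; equivalently, in the three-factor form \eqref{eq:betaB} the numerator is $p_0(p_0s_0-q_0^2)$ and the denominator is $p_0^{3/2}$.

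To extract the two factors in the displayed bound I would estimate these quantities against their $\boldsymbol{M}$-analogues. The denominator is immediate, $\boldsymbol{a}_J^T\boldsymbol{M}_0\boldsymbol{a}_J=p-\langle\boldsymbol{w},\boldsymbol{D}\boldsymbol{w}\rangle\ge(1-\delta_2)\boldsymbol{a}_J^T\boldsymbol{M}\boldsymbol{a}_J$, which gives the $[1-\xi_3(J/n)]^{3/2}$ term. For the numerator I would expand $p_0(p_0s_0-q_0^2)$ to second order in $\boldsymbol{D}$; the first-order decrease combines $\langle\boldsymbol{w},\boldsymbol{D}\boldsymbol{w}\rangle\ge\delta_1 p$ from the factor $p_0$ with the trilinear identity (writing $\lambda_k$ for the eigenvalues of $\boldsymbol{D}$ and $\alpha_k,\beta_k$ for the coordinates of $\boldsymbol{w},\dot{\boldsymbol{w}}$ in its eigenbasis)
\[
p\langle\dot{\boldsymbol{w}},\boldsymbol{D}\dot{\boldsymbol{w}}\rangle+s\langle\boldsymbol{w},\boldsymbol{D}\boldsymbol{w}\rangle-2q\langle\boldsymbol{w},\boldsymbol{D}\dot{\boldsymbol{w}}\rangle=\tfrac{1}{2}\sum_{i,k}(\lambda_i+\lambda_k)(\alpha_i\beta_k-\alpha_k\beta_i)^2\ge 2\delta_1(ps-q^2),
\]
while the second-order remainder is controlled by Cauchy--Schwarz in the $\boldsymbol{D}$-inner product, $\langle\boldsymbol{w},\boldsymbol{D}\boldsymbol{w}\rangle\langle\dot{\boldsymbol{w}},\boldsymbol{D}\dot{\boldsymbol{w}}\rangle-\langle\boldsymbol{w},\boldsymbol{D}\dot{\boldsymbol{w}}\rangle^2\le\delta_2^2(ps-q^2)$. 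This produces the numerator factor $1-\xi_1(J/n)+\xi_2(J/n)^2$; integrating and invoking Lemma \ref{lem:cgamma} ($|\beta_J|\asymp J$) delivers the displayed inequality.

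The genuine difficulty is the final clause $|\beta_{0,J}|\le|\beta_J|$, i.e.\ that the pulled-out factor is at most $1$ for all $J\in\mathcal{J}$. Because $j_{\mathrm{max}}=o(n/\log n)$ forces $J/n\to0$, this is a first-order question, $\xi_1\ge3\xi_3$, and it cannot be settled by a pointwise comparison of the two integrands: when $\boldsymbol{w}$ is aligned with the top eigendirection of $\boldsymbol{D}$ and $\dot{\boldsymbol{w}}$ with the bottom one, the ratio $[(p_0s_0-q_0^2)^{1/2}/p_0]\big/[(ps-q^2)^{1/2}/p]$ equals $[(1-\delta_1)/(1-\delta_2)]^{1/2}>1$, so no single factor $\le1$ can dominate the integrand everywhere. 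The resolution I would pursue is to integrate before bounding, exploiting a cancellation along the curve: in the model two-dimensional case where $\boldsymbol{w}$ traces a great circle one has the exact identities $p_0s_0-q_0^2\equiv(1-\delta_1)(1-\delta_2)$ and $\int_0^{2\pi}p_0^{-1}\,d\phi=2\pi[(1-\delta_1)(1-\delta_2)]^{-1/2}$, whence $|\beta_{0,J}|=|\beta_J|$ even though the integrand ratio oscillates about $1$. The main work is therefore to show that this reciprocal averaging of $p_0$ against the nearly constant area form $p_0s_0-q_0^2$ persists, to leading order in $J/n$, along the actual B-spline curve $\beta_J$ as $x$ sweeps $[0,1]$ — equivalently, that the renormalized image of $\beta_J$ under the near-identity contraction $\boldsymbol{P}^{1/2}$ is, on balance, no longer than $\beta_J$ itself. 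Controlling this averaging, rather than the (false) pointwise bound, is where the essential effort lies.
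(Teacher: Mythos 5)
Your derivation of the displayed inequality is essentially the paper's argument: the same starting identity $\boldsymbol{M}_0=\boldsymbol{M}-\boldsymbol{M\Omega}^{-1}\boldsymbol{M}$, the same lower bound on the denominator yielding the factor $[1-\xi_3(J/n)]^{3/2}$, and your second-order expansion of the numerator in $\boldsymbol{D}$ corresponds term by term to the two correction ratios in \eqref{eq:2ratio}, which the paper controls through the trace inequality (Lemma \ref{lem:tr}) and the quadratic-form bounds (Corollary \ref{cor:quad}). Your implementation is in fact tidier: collapsing the integrand of \eqref{eq:beta} to $(ps-q^2)^{1/2}/p$ and invoking the exact symmetrized identity $\tfrac{1}{2}\sum_{i,k}(\lambda_i+\lambda_k)(\alpha_i\beta_k-\alpha_k\beta_i)^2$ for the first-order term handles the asymmetric cross term directly, where the paper must expel $\boldsymbol{\Omega}^{-1}$ via cyclic trace manipulations; both routes produce the same bound with constants of the same order $J/n$.

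The genuine issue is the final clause, and here your proposal is incomplete. You are right that $|\beta_{0,J}|\le|\beta_J|$ does not follow from the display unless $\xi_1\geq 3\xi_3$ at first order; that the constants produced (which depend on the condition numbers in \eqref{eq:prior} and \eqref{eq:BB}) do not certify this; and that your alignment example shows the pointwise integrand comparison can genuinely fail --- indeed even your sharp constants give a pointwise prefactor of roughly $1-\tfrac{3}{2}\delta_1+\tfrac{3}{2}\delta_2\geq 1$, with equality only when $\boldsymbol{D}$ is isotropic. (The paper's own proof simply reads the clause off the displayed bound, so this criticism applies to it as well; it is a sound diagnosis.) However, your proposed repair --- integrate first and exploit reciprocal averaging of $p_0$ against the nearly constant $p_0s_0-q_0^2$ --- is verified only in the two-dimensional great-circle model, where the curve closes and favourable and unfavourable arcs cancel exactly; for an open B-spline curve whose position rotates through an $x$-dependent eigenframe of $\boldsymbol{D}$ no such exact cancellation is available, and you supply no mechanism forcing the average in the right direction. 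As written, the proposal proves the display but not the consequent. The pragmatic resolution is to observe that the only place the clause is used, the coverage argument in Theorem \ref{th:credible}, survives with the weaker statement $|\beta_{0,J}|\le(1+O(J/n))|\beta_J|$, which your bound (and the paper's) does deliver: it inflates the volume-of-tube tail estimate by a factor $1+o(1)$, absorbed into the $o(1)$ of the coverage statement. Alternatively, the exact clause can be rescued under extra structure making the first-order gain dominate, e.g., $\boldsymbol{\Omega}$ proportional to the identity together with nearly matching constants in \eqref{eq:BB}, so that $\delta_2/\delta_1$ is close to $1$ and the pointwise deduction becomes legitimate.
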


It now remains to verify the first condition dictating a lower bound for the Bayes Lepski's estimate $\widehat{j}_n$. To this end, let us describe the set $\mathcal{F}$ suited for this purpose. First, some notations. Let $\delta_q$ be some positive constant depending only on $q$ and $\mathcal{T}$ a quasi-uniform partition of $[0,1]$. Denote $\Delta_{\mathcal{T}}$ to be the maximum knot increment associated with this partition $\mathcal{T}$. Furthermore, let $\mathbb{T}$ be the collection of all quasi-uniform partitions of $[0,1]$. We then take our set $\mathcal{F}$ to be
\begin{align}\label{eq:similar}
\mathcal{F}^{\alpha}=\bigcap_{\mathcal{T}\in\mathbb{T}}\left\{f:\inf_{p\in\mathcal{P}_q(\mathcal{T})}\|f-p\|_\infty\geq \delta_q\|f\|_{\mathcal{H}^{\alpha}}\Delta_{\mathcal{T}}^{\alpha}\right\},
\end{align}
where $0<\alpha\leq q$ and $\mathcal{P}_q(\mathcal{T})$ is the space of polynomial splines of order $q$ with knots in $\mathcal{T}$. This set together with the H\"{o}lder ball $\mathcal{H}^{\alpha}(R)$ then make it possible to establish the following lemma and at the same time fulfils the first lower bound condition.

\begin{lemma}\label{lem:lower}
Let $\kappa\in\mathbb{N}$ be some constant that depends on $\delta_q,\alpha$ and $\tau$ where $\tau$ is a large enough positive constant contained in the stopping rule $\widehat{j}_n$ of \eqref{eq:optimalj}. Then for any $R>0$, there exist constants $\mu>1$ and $Q>0$ such that
\begin{align*}
\sup_{0<\alpha\leq q}\sup_{f_0\in\mathcal{F}^{\alpha}}P_0(\widehat{j}_n<j_n^{*}/\kappa)\lesssim\frac{1}{j_{\mathrm{min}}^{\mu-1}}+j_n^{*}e^{-Qj_n^{*}},
\end{align*}
and consequently in view of Proposition \ref{lem:upper}, we have as $n\rightarrow\infty$,
\begin{align}\label{eq:uplow}
\inf_{0<\alpha\leq q}\inf_{f_0\in\mathcal{H}^{\alpha}(R)\bigcap\mathcal{F}^{\alpha}}P_0(j_n^{*}/\kappa\leq\widehat{j}_n\leq j_n^{*})\rightarrow1.
\end{align}
\end{lemma}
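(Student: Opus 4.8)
The plan is to show that if the Bayes Lepski rule were to stop at some $j$ strictly below $j_n^*/\kappa$, then the self-similarity built into $\mathcal{F}^\alpha$ forces the sup-norm distance between the posterior means at $j$ and at $j_n^*$ to exceed the stopping threshold, so that such early stopping occurs only on a set of small probability. Concretely, since $\widehat{j}_n$ is a minimum, the event $\{\widehat{j}_n<j_n^*/\kappa\}$ is contained in the union over $j\in\mathcal{J}$ with $j<j_n^*/\kappa$ of the events on which the stopping inequality holds at that $j$ for \emph{every} $i>j$. Because $\{\widehat{j}_n<j_n^*/\kappa\}\subseteq\{\widehat{j}_n<j_n^*\}$ (as $\kappa\geq1$), the index $i=j_n^*$ is a valid comparison point lying in $\mathcal{J}$ and exceeding every such $j$. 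Fixing this single index and discarding the rest of the constraints, I would reduce matters to bounding, for each $j<j_n^*/\kappa$, the probability
\[
P_0\left(\|\mathrm{E}_j(f|\boldsymbol{Y})-\mathrm{E}_{j_n^*}(f|\boldsymbol{Y})\|_\infty\leq\tau\widehat{\sigma}_{j_n^*}\sqrt{\tfrac{j_n^*\log j_n^*}{n}}\right),
\]
and then summing these probabilities over $j$.

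The core estimate is a lower bound on the left-hand norm. Writing $S_j:=\mathrm{E}_j(f|\boldsymbol{Y})-\mathrm{E}_0\mathrm{E}_j(f|\boldsymbol{Y})$ for the centered stochastic part, the reverse triangle inequality gives
\[
\|\mathrm{E}_j(f|\boldsymbol{Y})-\mathrm{E}_{j_n^*}(f|\boldsymbol{Y})\|_\infty\geq\|\mathrm{E}_0\mathrm{E}_j(f|\boldsymbol{Y})-\mathrm{E}_0\mathrm{E}_{j_n^*}(f|\boldsymbol{Y})\|_\infty-\|S_j\|_\infty-\|S_{j_n^*}\|_\infty.
\]
For the deterministic bias-separation term I would first show that $\mathrm{E}_0\mathrm{E}_j(f|\boldsymbol{Y})$ differs from the spline projection $K_jf_0$ only by a prior-induced term of order $j/n$ (using the eigenvalue bounds \eqref{eq:BB}, the bandedness, and $\|\boldsymbol{\eta}\|_\infty<\infty$), which is negligible against $j^{-\alpha}$ for $j\leq j_n^*$. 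Then, since $K_jf_0\in\mathcal{P}_q(\mathcal{T})$ for the quasi-uniform knot set at resolution $j$ (so $\Delta_{\mathcal{T}}\asymp j^{-1}$), the defining inequality of $\mathcal{F}^\alpha$ yields $\|K_jf_0-f_0\|_\infty\geq\delta_q\|f_0\|_{\mathcal{H}^\alpha}\Delta_{\mathcal{T}}^\alpha\gtrsim\delta_q\|f_0\|_{\mathcal{H}^\alpha}j^{-\alpha}$, while the approximation bound gives $\|K_{j_n^*}f_0-f_0\|_\infty\lesssim\|f_0\|_{\mathcal{H}^\alpha}(j_n^*)^{-\alpha}$. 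Using $j<j_n^*/\kappa$ so that $j^{-\alpha}\geq\kappa^\alpha(j_n^*)^{-\alpha}$, the separation is at least $(c\delta_q\kappa^\alpha-C_q)\|f_0\|_{\mathcal{H}^\alpha}(j_n^*)^{-\alpha}$, and because $j_n^*$ balances bias against standard deviation we may replace $\|f_0\|_{\mathcal{H}^\alpha}(j_n^*)^{-\alpha}$ by $\sqrt{j_n^*\log j_n^*/n}$ up to constants; this last replacement also removes the explicit dependence on $\|f_0\|_{\mathcal{H}^\alpha}$ and is what makes the bound uniform over $f_0\in\mathcal{F}^\alpha$. This is precisely the step where self-similarity is indispensable: without a lower bound on the approximation error, $f_0$ could be a polynomial spline of order $q$, the separation would vanish, and the rule could legitimately stop arbitrarily early.

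It then remains to absorb the two stochastic terms and the random factor $\widehat{\sigma}_{j_n^*}$. For the latter, Proposition \ref{prop:sigma} at $J=j_n^*$ bounds $\widehat{\sigma}_{j_n^*}\leq\sigma_0+\xi\delta_{n,j_n^*}$ off an event of probability at most $Ce^{-Qj_n^*}$, which after the union over the $\lesssim j_n^*$ indices $j$ produces the $j_n^*e^{-Qj_n^*}$ term. For $S_j$ and $S_{j_n^*}$, which are mean-zero Gaussian processes of the form $\boldsymbol{b}_{j,q}(\cdot)^T(\boldsymbol{B}^T\boldsymbol{B}+\boldsymbol{\Omega}^{-1})^{-1}\boldsymbol{B}^T\boldsymbol{\varepsilon}$ not depending on $f_0$, I would invoke a Gaussian maximal inequality (Borell--TIS together with an entropy bound exploiting compact support and bandedness) to obtain $P_0(\|S_j\|_\infty>C\sigma_0\sqrt{j\log j/n})\lesssim j^{-\mu}$ for a constant $\mu>1$ tunable through $C$; summing $\sum_{j\geq j_{\mathrm{min}}}j^{-\mu}\lesssim j_{\mathrm{min}}^{-(\mu-1)}$ yields the first term. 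Since $j<j_n^*/\kappa$ forces $\sqrt{j\log j/n}\lesssim\sqrt{j_n^*\log j_n^*/n}$, choosing $\kappa$ large enough (depending on $\delta_q,\alpha,\tau$ and the upper bound on $\widehat{\sigma}_{j_n^*}$) makes the bias separation strictly exceed $\|S_j\|_\infty+\|S_{j_n^*}\|_\infty+\tau\widehat{\sigma}_{j_n^*}\sqrt{j_n^*\log j_n^*/n}$ on the good event, contradicting the stopping inequality and giving the displayed bound. Finally, combining with Proposition \ref{lem:upper}, which shows $P_0(\widehat{j}_n>j_n^*)$ is polynomially small in $j_n^*$, and using $j_{\mathrm{min}},j_n^*\to\infty$, all tail contributions vanish and \eqref{eq:uplow} follows. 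The main obstacle I anticipate is the uniform-in-$j$ Gaussian maximal inequality with a polynomial tail sharp enough to sum to $j_{\mathrm{min}}^{-(\mu-1)}$, coupled with verifying that the prior contribution to the posterior mean bias is genuinely negligible so that the self-similarity lower bound transfers from $K_jf_0$ to $\mathrm{E}_0\mathrm{E}_j(f|\boldsymbol{Y})$.
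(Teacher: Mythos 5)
Your proposal is correct and follows essentially the same route as the paper: reduce $\{\widehat{j}_n<j_n^{*}/\kappa\}$ to the single comparison index $i=j_n^{*}$, apply the reverse triangle inequality to isolate a deterministic bias separation against two centered Gaussian terms and the threshold, lower-bound the separation via the self-similarity of $\mathcal{F}^{\alpha}$ and the balancing definition of $j_n^{*}$, handle $\widehat{\sigma}_{j_n^{*}}$ with Proposition \ref{prop:sigma} (producing the $j_n^{*}e^{-Qj_n^{*}}$ term), control the centered terms with a Borell-type concentration bound at level $x=\mu\log j$ (this is exactly Lemma \ref{lem:concentrate}, which you need not re-derive), take $\kappa$ large enough that $C_1\delta_q\kappa^{\alpha}-C_2-2\sigma_0\tau>0$, and sum $j^{-\mu}$ over $j\geq j_{\mathrm{min}}$.

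The one place you diverge is the step you flag as your ``main obstacle'': transferring the self-similarity lower bound from $K_jf_0$ to $\mathrm{E}_0\mathrm{E}_j(f|\boldsymbol{Y})$ by showing the prior-induced discrepancy is negligible. This detour is sound --- indeed $l_{1,j}/\lambda_{\boldsymbol{B},j}\asymp j/n\ll\sqrt{j\log{(j)}/n}\leq\|f_0\|_{\mathcal{H}^{\alpha}}j^{-\alpha}$ for $j\leq j_n^{*}$, using $j_{\mathrm{max}}=o(n/\log n)$ --- but it is unnecessary. The paper simply observes that $\mathrm{E}_0\widetilde{f}_j(x)=\sum_{k}\mathrm{E}_0\mathrm{E}(\theta_k|\boldsymbol{Y})B_{k,q}(x)$ is itself an element of $\mathcal{P}_q(\mathcal{T}_n)$, so the defining inequality of $\mathcal{F}^{\alpha}$ applies to it directly: $\|\mathrm{E}_0\widetilde{f}_j-f_0\|_\infty\geq\inf_{p\in\mathcal{P}_q(\mathcal{T}_n)}\|p-f_0\|_\infty\geq\delta_q\|f_0\|_{\mathcal{H}^{\alpha}}\Delta_{\mathcal{T}_n}^{\alpha}$, with no comparison to $K_jf_0$ and no negligibility argument needed. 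This is precisely why $\mathcal{F}^{\alpha}$ is formulated as a bound on the distance to the whole spline space rather than to the specific projection $K_jf_0$; adopting that observation collapses your extra step and removes the anticipated obstacle entirely.
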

As a result, we can appeal to Theorem \ref{th:credible} and conclude that for any $R>0$,
\begin{align}
\inf_{0<\alpha\leq q}\inf_{f_0\in\mathcal{H}^{\alpha}(R)\bigcap\mathcal{F}^{\alpha}}P_0(f_0\in\mathcal{B}_n)\geq1-\gamma+o(1).
\end{align}
By Lemmas \ref{lem:cgamma}, \ref{lem:lower} and \ref{lem:b2}, we have the following B-splines version of Corollary \ref{cor:radius} on the size of the radius function
\begin{align}
\inf_{0<\alpha\leq q}\inf_{f_0\in\mathcal{H}^{\alpha}(R)\bigcap\mathcal{F}^{\alpha}}P_0\left[r(x)\asymp(\log{n}/n)^{\alpha/(2\alpha+1)},\forall x\in[0,1]\right]\rightarrow1.
\end{align}
We note that the uniform length of the radius is exactly equal up to some constant to the adaptive posterior contraction rate in \eqref{eq:Brate}, and we say that this Bayesian uncertainty quantification procedure is honest.

\subsection{The uncertainty principle and self-similar functions}\label{sec:uncertain}
In this subsection, we will investigate the function space $\mathcal{F}^{\alpha}$ in order to gain a deeper understanding as to its nature and its implication towards statistical estimation and uncertainty quantification. In a quick glance, the restriction imposed on $\mathcal{F}^{\alpha}$ involves a lower bound on the approximation power of polynomial splines, and it further suggests that we can only quantify uncertainty for functions that are at least a certain distance away from the space of splines.

Now if we were using posterior based on random series of order $q$ B-splines, and the true regression function $f_0$ turns out to be a polynomial spline of the same order. Then we ``hit the right spot" and our posterior will concentrate around $f_0$ at the optimal rate $(\log{n}/n)^{\alpha/(2\alpha+1)}$, given that we have correctly modelled the truth. However for this truth $f_0$, we have $\inf_{p\in\mathcal{P}_q(\mathcal{T})}\|f_0-p\|_\infty=0$ and there is nothing to prevent $\widehat{j}_n$ to stop too late and choose a $J\in\mathcal{J}$ that is too small. Consequently, we will not be able to quantify uncertainty honestly owing to the fact that the posterior mean bias has a much larger order than the posterior standard deviation. This example, although extreme, serves to shed light on a fundamental limitation in statistics. \textit{The more accurately we are able to model and estimate the truth, the less we are able to access its quality and statistical uncertainty and vice versa}. Limitation of this type is reminiscence to the uncertainty principle in quantum mechanics, where it states that the more precisely we know the position of a particle, the less precisely its momentum can be determined and vice versa.

If $f_0\in\mathcal{F}^{\alpha}$, we know that by Theorem 22 in Chapter XII of \citep{deBoor} or Theorem 6.31 of \citep{lschumaker}, there is a $p_0\in\mathcal{P}_q(\mathcal{T})$ and some constant $C_q>0$ depending only on $f_0$ and $q$ such that
\begin{align}\label{eq:approx}
\delta_q\|f_0\|_{\mathcal{H}^{\alpha}}\Delta_{\mathcal{T}}^{\alpha}\leq\inf_{p\in\mathcal{P}_q(\mathcal{T})}\|f_0-p\|_\infty\leq\|f_0-p_0\|_\infty\leq C_q\|f_0\|_{\mathcal{H}^{\alpha}}\Delta_{\mathcal{T}}^{\alpha}.
\end{align}
In this formulation, condition \eqref{eq:similar} can be interpreted as being self-similar in regularity across different levels of quasi-uniform knot partition $\mathcal{T}\in\mathbb{T}$, or in other words, functions that have the same approximation power by splines across different quasi-uniform knot partitions of $[0,1]$. In fact, it is even possible to redefine \eqref{eq:approx} in a recursice version so as to remove its dependence on $\alpha$. In view of Lemma 6.17 of \citep{lschumaker}, we can construct a sequence of quasi-uniform knot partitions $\mathcal{T}_1\subset\mathcal{T}_2\subset\cdots\subset\mathcal{T}_M$ such that $\Delta_{\mathcal{T}_{k+1}}/2\leq\Delta_{\mathcal{T}_{k}}\leq3\Delta_{\mathcal{T}_{k+1}}/2$. Thus it view \eqref{eq:approx}, it holds that
\begin{align*}
\inf_{p\in\mathcal{P}_q(\mathcal{T}_{k+1})}\|f_0-p\|_\infty\geq\delta_q\|f_0\|_{\mathcal{H}^{\alpha}}\Delta_{\mathcal{T}_{k+1}}^{\alpha}
&\geq\frac{\delta_q}{C_q}\inf_{p\in\mathcal{P}_q(\mathcal{T}_k)}\|f_0-p\|_\infty\left(\frac{\Delta_{\mathcal{T}_{k+1}}}{\Delta_{\mathcal{T}_k}}\right)^{\alpha}\\
&\geq Q\inf_{p\in\mathcal{P}_q(\mathcal{T}_k)}\|f_0-p\|_\infty,
\end{align*}
with $Q:=(2/3)^{\alpha}\delta_q/C_q$. Naturally, we expect that $\inf_{p\in\mathcal{P}_q(\mathcal{T}_{k+1})}\|f_0-p\|_\infty\leq\inf_{p\in\mathcal{P}_q(\mathcal{T}_k)}\|f_0-p\|_\infty$ since finer knot partitions will result in better approximations. Functions in $\mathcal{F}^{\alpha}$ however are able to invert this natural order by incurring an extra constant. Note that self-referencing in splines approximation error can be seen as the $L_\infty$-splines analogue of the polished tail condition introduced in Definition 3.2 of \citep{szabo}.

Now are sets like \eqref{eq:similar} a reasonable construction in practice? Here, we give two classes of functions that satisfy the lower bound of \eqref{eq:similar}.\vspace{10pt}

\noindent\textbf{Example 1:} Let $m\in\mathbb{N}$ be such that $t_{m+1}-t_m=\Delta_{\mathcal{T}}$. Let $\|g^{(\alpha)}\|_\infty<\infty$ for some $\alpha\in\mathbb{N}$, $g$ is not a polynomial spline of order $q$ or less, with $g$ and its derivatives up to $\alpha-1$ are $0$ at its boundary $0$ (or at $1$ with slight modification of the argument below).  We construct
\begin{align*}
f_0(x)=\begin{cases}
\Delta_{\mathcal{T}}^{\alpha}g\left(\dfrac{x-t_m}{t_{m+1}-t_m}\right),&\quad\text{for $t_m\leq x\leq t_{m+1}$},\\
0,&\quad\text{otherwise.}
\end{cases}
\end{align*}
Let us denote the $L_\infty$-Sobolev space $L_{\infty}^{\alpha}:=\{f:\sum_{k=0}^{\alpha}\|f^{(k)}\|_\infty<\infty\}$. By Taylor's expansion on $t_m$ and the assumption that $g$ and its derivatives up to $\alpha-1$ vanish at $0$, it follows that $f_0(x)=f_0^{(\alpha)}(\xi)(x-t_m)^{\alpha}/\alpha!$ for some $\xi$ in between $x$ and $t_m$. Therefore since our domain is $[0,1]$ and $\|f_0^{(\alpha)}\|_\infty<\infty$ because of $\|g^{(\alpha)}\|_\infty<\infty$, it follows that $f_0\in L_\infty^{\alpha}$. By Remark 1 of \citep{nickl2011}, we have the embedding $L_{\infty}^{\alpha}\subset\mathcal{B}^{\alpha}_{\infty,\infty}$, where the latter Besov space is equivalent to $\mathcal{H}^{\alpha}$. Therefore, we conclude $\|f_0\|_{\mathcal{H}^{\alpha}}\leq C$ for some constant $C>0$.

Define $\beta:=\inf_{p\in\mathcal{P}_q(\mathcal{T})}\|g-p\|_\infty$ and denote $\overline{p}(x):=\Delta^{\alpha}p[(x-t_m)/(t_{m+1}-t_m)]$. Because $g$ is not a spline of order $q$, we have by a change of variable that
\begin{align*}
0<\beta\leq\sup_{x\in[0,1]}|g(x)-p(x)|=\Delta_{\mathcal{T}}^{-\alpha}\sup_{x\in[t_m,t_{m+1}]}|f(x)-\overline{p}(x)|.
\end{align*}
Now since the inequality above holds for any arbitrary $p\in\mathcal{P}_q(\mathcal{T})$ and $\overline{p}$ is also a polynomial spline, we can conclude that
\begin{align*}
\inf_{p\in\mathcal{P}_q(\mathcal{T})}\|f_0-p\|_\infty\geq\Delta_{\mathcal{T}}^{\alpha}(\beta/C)C\geq\delta_q\|f_0\|_{\mathcal{H}^{\alpha}([0,1])}\Delta_{\mathcal{T}}^{\alpha},
\end{align*}
where $\delta_q:=\beta/C$ depends only on $f_0$ and $q$.

Another important class of examples are those functions who exhibit self-similarity in scale, such that the function looks approximately the same at different scaling of its domain.\vspace{10pt}
\textbf{Example 2:} Let $f_0$ be self-similar (or invariant) in scale of order $\alpha$, i.e., $f_0(x/\lambda)=\lambda^{-\alpha}f_0(x)$ for any $\lambda>0$, and $f_0$ is not a polynomial spline of order $q$. For $p\in\mathcal{P}_q(\mathcal{T})$, let $\widehat{p}(x)=\lambda^{\alpha}p(x/\lambda)$. By a change of variables, we have
\begin{align*}
\sup_{0\leq x\leq 1}|f_0(x)-p(x)|=\lambda^{-\alpha}\sup_{0\leq x\leq\lambda}|\lambda^{\alpha}f_0(x/\lambda)-\lambda^{\alpha}p(x/\lambda)|=\lambda^{-\alpha}\sup_{0\leq x\leq\lambda}|f_0(x)-\widehat{p}(x)|.
\end{align*}
Note that $\widehat{p}$ is also a polynomial splines of order $q$. Since the above equality is valid for any $p\in\mathcal{P}_q(\mathcal{T})$ and $f_0$ is not a polynomial splines of order $q$,
\begin{align*}
0<\delta_q:=\inf_{p\in\mathcal{P}_q(\mathcal{T})}\|f_0-p\|_{L_{\infty}[0,1]}\leq\lambda^{-\alpha}\inf_{p\in\mathcal{P}_q(\mathcal{T})}\|f_0-p\|_{L_{\infty}[0,\lambda]},
\end{align*}
where $\delta_q$ is some constant depending only on $q$ and $f_0$. The lower bound can then be recovered through setting $\lambda=\|f_0\|_{\mathcal{H}^{\alpha}}^{1/\alpha}\Delta_{\mathcal{T}}$. This notion of self-similarity can be substantially generalized to include being invariant with respect to certain transformations of the domain, for example see (2.3) of \citep{similar}.

\section{Application II: Wavelet bases}\label{sec:wavelet}
Since our domain is $[0,1]$, we will use the Cohen-Daubechies-Vial (CDV) father and mother wavelets $\varphi$ and $\psi$. These wavelets are compactly supported and boundary corrected. Assume that the wavelets are $r$-regular, in the sense that $|(d^p/dx^p)\varphi(x)|\leq C_m(1+|x|)^{-m}$ for some constants $C_m,m\in\mathbb{N}$ and $0\leq p\leq r$. We construct an orthonormal basis for $L_2([0,1])$ by dilating and translating these wavelets as follows: $\varphi_{j,k}(x)=2^{j/2}\varphi(2^jx-k)$ and $\psi_{j,k}(x)=2^{j/2}\psi(2^jx-k)$. We then project $f$ onto the $2^J$-dimensional CDV wavelet bases as
\begin{align*}
f(x)=\sum_{k=0}^{2^N-1}\vartheta_k\varphi_{N,k}(x)+\sum_{j=N}^{J-1}\sum_{k=0}^{2^j-1}\theta_{j,k}\psi_{j,k}(x)=:\boldsymbol{\psi}_J(x)^T\boldsymbol{\theta},
\end{align*}
where we have concatenated the father and mother wavelets into a single vector of functions $\boldsymbol{\psi}_J(x)$, and their coefficients collected into $\boldsymbol{\theta}$. The total number of wavelets is $2^J$ which we will identify with the ``$J$" used in previous sections. Here we choose $N$ such that $2^N\geq2r$. As before, we put prior $\boldsymbol{\theta}|\sigma\sim\mathrm{N}(\boldsymbol{\eta}, \sigma^2\boldsymbol{\Omega})$ and estimate $\sigma^2$ by empirical Bayes \eqref{eq:sigma}. Write the wavelet basis matrix as $\boldsymbol{\Psi}=(\boldsymbol{\psi}_J(X_1)^T,\dotsc,\boldsymbol{\psi}_J(X_n)^T)^T$. We choose the coviarates such that $\|F_n-F\|_\infty=O(1/n)$. Since $2^{j_{\mathrm{max}}}=o(n)$ by assumption, Lemma 6.4 of \citep{yoo2017} tells us that there exist constants $C_1,C_2>0$ such that
\begin{align}
C_1n\leq\lambda_{\mathrm{min}}(\boldsymbol{\Psi}^T\boldsymbol{\Psi})\leq\lambda_{\mathrm{max}}(\boldsymbol{\Psi}^T\boldsymbol{\Psi})\leq C_2n,
\end{align}
and we have $\lambda_{\boldsymbol{\Psi},J}=n$ in this case. Since the wavelets are compact supported and uniformly bounded, we have $l_{1,J}\lesssim 2^{J/2}$ (Lemma \ref{lem:w2}). If $2^J\leq n$, then $\lambda_{\boldsymbol{\Psi},J}\gtrsim l_{1,J}^2$. Suppose the support of $\psi$ is $[a,b]$, then for any $x$ and $j$, $\psi_{j,k}(x)\neq0$ for $2^jx-b\leq k\leq 2^jx-a$ and only a fixed number (not depending on $j$) of consecutive $k$'s for $\psi_{j,k}$ are nonzero. Thus as in the B-splines case before, $\boldsymbol{\Psi}^T\boldsymbol{\Psi}$ is banded.

Define $K_J(x,y)=\sum_k\varphi_{J,k}(x)\varphi_{J,k}(y)$ and let $K_J(f)(x)=\int K_J(x,y)f(y)dy$ for any $f\in L_2([0,1])$ be the wavelet projection operator. By Theorem 4 of Section 2.6 \citep{yves}, we know that $K_J(p)=p$ for any polynomial $p$ of degree $r$ or less. Thus, $2^{j_{\mathrm{min}}}=(n/\log{n})^{1/(2r+1)}$ and we adapt $\alpha$ up to $r$. Then in the present case
\begin{align}
\widehat{j}_n=\min\left\{j\in\mathcal{J}:\|\mathrm{E}_j(f|\boldsymbol{Y})-\mathrm{E}_i(f|\boldsymbol{Y})\|_\infty\leq\tau\widehat{\sigma}_i\sqrt{\frac{i2^i}{n}},\forall i>j,i\in\mathcal{J}\right\}.
\end{align}

Let $\mathcal{B}^{\alpha}_{\infty,\infty}$ be the Besov space consisting of functions $g:[0,1]\rightarrow\infty$ such that $\|g\|_{\mathcal{B}^{\alpha}_{\infty,\infty}}<\infty$, where the Besov norm is defined as
\begin{align}
\|g\|_{\mathcal{B}^{\alpha}_{\infty,\infty}}=\max_{0\leq k\leq2^N-1}|\langle g,\varphi_{N,k}\rangle|+\max_{j\geq N}2^{j(\alpha+1/2)}\max_{0\leq k\leq2^j-1}|\langle g,\psi_{j,k}\rangle|.
\end{align}
Here we take $\mathcal{G}=\{g:\|g\|_{\mathcal{B}^{\alpha}_{\infty,\infty}}\leq R\}=:\mathcal{B}^{\alpha}_{\infty,\infty}(R)$ to be the Besov ball of radius $R$. By Proposition 4.3.8 of \citep{nickl2016}, we have for any $g\in\mathcal{B}^{\alpha}_{\infty,\infty}(R)$ that there is a constant $C_r>0$ depending on $r$ such that
\begin{align}\label{eq:approxw}
\|K_J(g)-g\|_\infty\leq C_r\|g\|_{\mathcal{B}^{\alpha}_{\infty,\infty}}2^{-J\alpha},
\end{align}
and $h(x)=2^{-x\alpha}$ in this case. Hence we see that $h(j_{\mathrm{min}})=(\log{n}/n)^{\alpha/(2r+1)}\to0$ as $n\to\infty$. This wavelet projection can be represented as $K_J(g)=\langle\boldsymbol{\psi}_J,\boldsymbol{\theta}_0\rangle$ for some $\boldsymbol{\theta}_0\in\mathbb{R}^{2^J}$ and by virtue of Proposition 4.3.5 of \citep{nickl2016}, we have $\|\boldsymbol{\theta}_0\|_\infty<\infty$. Suppose $f_0\in\mathcal{B}^{\alpha}_{\infty,\infty}(R)$ with $\alpha>0$ unknown. Then $j_n^{*}$ of \eqref{eq:j0} balances $\|f_0\|_{\mathcal{B}^{\alpha}_{\infty,\infty}}2^{-j\alpha}$ and $\sqrt{2^jj/n}$ to yield $2^{j_n^{*}}\asymp\|f_0\|_{\mathcal{B}^{\alpha}_{\infty,\infty}}^{2/(2\alpha+1)}(n/\log{n})^{1/(2\alpha+1)}$. As a result, the wavelet version of Theorem \ref{th:rate} is
\begin{align}
\sup_{0<\alpha\leq r}\sup_{f_0\in\mathcal{B}^{\alpha}_{\infty,\infty}(R)}\mathrm{E}_0\Pi[\|f-f_0\|_\infty>\xi(\log{n}/n)^{\alpha/(2\alpha+1)}|\boldsymbol{Y}]\lesssim(\log{n}/n)^{M}
\end{align}
as $n\rightarrow\infty$, for some constants $R,\xi,M>0$.

The wavelet credible band constructed using the volume-of-tube method is
\begin{align*}
\mathcal{W}_n:=\left\{f:|f(x)-\mathrm{E}_{\widehat{j}_n}[f(x)|\boldsymbol{Y}]|\leq w_{\gamma,\widehat{j}_n}\sqrt{\mathrm{Var}_{\widehat{j}_n}[f(x)|\boldsymbol{Y}]},\forall x\in[0,1]\right\},
\end{align*}
where the posterior mean and variance is
\begin{align}
\mathrm{E}_{\widehat{j}_n}[f(x)|\boldsymbol{Y}]&=\boldsymbol{\psi}_{\widehat{j}_n}(x)^T(\boldsymbol{\Psi}^T\boldsymbol{\Psi}+\boldsymbol{\Omega}^{-1})^{-1}(\boldsymbol{\Psi}^T\boldsymbol{Y}+\boldsymbol{\Omega}^{-1}\boldsymbol{\eta}),\\ \mathrm{Var}_{\widehat{j}_n}[f(x)|\boldsymbol{Y}]&=\widehat{\sigma}_{\widehat{j}_n}^2\boldsymbol{\psi}_{\widehat{j}_n}(x)^T(\boldsymbol{\Psi}^T\boldsymbol{\Psi}+\boldsymbol{\Omega}^{-1})^{-1}\boldsymbol{\psi}_{\widehat{j}_n}(x),
\end{align}
and we choose $w_{\gamma,\widehat{j}_n}$ such that it solves $\gamma=|\beta_{\widehat{j}_n}|\pi^{-1}e^{-w_{\gamma,\widehat{j}_n}^2/2}+2[1-\Phi(w_{\gamma,\widehat{j}_n})]$, where the arc length is
\begin{align}\label{eq:betaW}
|\beta_{\widehat{j}_n}|=\int_0^1\frac{[\boldsymbol{\psi}_{\widehat{j}_n}(x)^T\boldsymbol{M}\boldsymbol{R}^T\boldsymbol{M}\boldsymbol{RM}\boldsymbol{\psi}_{\widehat{j}_n}(x)]^{1/2}}
{[\boldsymbol{\psi}_{\widehat{j}_n}(x)^T\boldsymbol{M}\boldsymbol{\psi}_{\widehat{j}_n}(x)]^{3/2}}dx,
\end{align}
for $\boldsymbol{M}:=(\boldsymbol{\Psi}^T\boldsymbol{\Psi}+\boldsymbol{\Omega}^{-1})^{-1}$ and $\boldsymbol{R}:=\boldsymbol{\dot{\psi}}_{\widehat{j}_n}(x)\boldsymbol{\psi}_{\widehat{j}_n}(x)^T-\boldsymbol{\psi}_{\widehat{j}_n}(x)\boldsymbol{\dot{\psi}}_{\widehat{j}_n}(x)^T$. As before, we proceed to verify the 3 conditions. Let us remind again the reader that $J$ used in the previous sections now translates to $2^J$ and the event $\{j_n^{*}/\kappa\leq\widehat{j}_n\leq j_n^{*}\}$ appearing in conditions 1 and 3 now is $\{j_n^{*}-\kappa\leq\widehat{j}_n\leq j_n^{*}\}$.

We start with the third condition. By applying Lemma \ref{lem:w2}, we have $\inf_{x\in[0,1]}\|\boldsymbol{\psi}_{\widehat{j}_n}(x)\|\gtrsim2^{\widehat{j}_n/2}$. Then under event $\{j_n^{*}-\kappa\leq\widehat{j}_n\leq j_n^{*}\}$, we have $(2^{\widehat{j}_n}/n)^{1/2}\asymp2^{-\alpha\widehat{j}_n}$ and the third condition is
\begin{align*}
(\widehat{j}_n)^{-1/2}2^{-\widehat{j}_n/2}\left[\sqrt{\frac{2^{\widehat{j}_n}}{n}}+\sqrt{n}2^{-\alpha\widehat{j}_n}\right]\lesssim(\log{n})^{-1/2}\left(\frac{1}{\sqrt{n}}+1\right)=o_{P_0}(1),
\end{align*}
as $n\rightarrow\infty$. The second condition is fulfilled due to the following two lemmas.
\begin{lemma}\label{lem:cgammaw}
Let $|\beta_J|=\int_0^1\|\beta_J^{'}(x)\|dx$ be the arc length of the curve $\beta_J:[0,1]\rightarrow S^{J-1}$ given in \eqref{eq:betaW}. Then as $n\rightarrow\infty$,
\begin{align*}
|\beta_J|\asymp2^J\quad\text{for any $J\in\mathcal{J}$}.
\end{align*}
\end{lemma}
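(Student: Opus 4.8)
The plan is to mirror the proof of Lemma \ref{lem:cgamma}, carrying the dilation scaling $J\mapsto 2^J$ through each step. First I would collapse the matrix integrand in \eqref{eq:betaW} to scalar form. Writing $a(x)=\boldsymbol{\psi}_J(x)^T\boldsymbol{M}\boldsymbol{\psi}_J(x)$, $b(x)=\boldsymbol{\dot{\psi}}_J(x)^T\boldsymbol{M}\boldsymbol{\psi}_J(x)$ and $c(x)=\boldsymbol{\dot{\psi}}_J(x)^T\boldsymbol{M}\boldsymbol{\dot{\psi}}_J(x)$, the antisymmetric structure of $\boldsymbol{R}$ gives $\boldsymbol{R}\boldsymbol{M}\boldsymbol{\psi}_J=a\boldsymbol{\dot{\psi}}_J-b\boldsymbol{\psi}_J$, whence $\boldsymbol{\psi}_J^T\boldsymbol{M}\boldsymbol{R}^T\boldsymbol{M}\boldsymbol{R}\boldsymbol{M}\boldsymbol{\psi}_J=a(ac-b^2)$ and the integrand reduces to $\sqrt{ac-b^2}/a$, the speed of the normalized curve. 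I would then record the relevant orders: $\boldsymbol{M}=(\boldsymbol{\Psi}^T\boldsymbol{\Psi}+\boldsymbol{\Omega}^{-1})^{-1}\asymp n^{-1}\boldsymbol{I}$ in the Loewner sense (from $\boldsymbol{\Psi}^T\boldsymbol{\Psi}\asymp n\boldsymbol{I}$ and bounded $\boldsymbol{\Omega}^{-1}$), $\|\boldsymbol{\psi}_J(x)\|\asymp 2^{J/2}$ uniformly in $x$ by Lemma \ref{lem:w2}, and $\|\boldsymbol{\dot{\psi}}_J(x)\|\lesssim 2^{3J/2}$, the last because differentiating $\psi_{j,k}(x)=2^{j/2}\psi(2^jx-k)$ produces an extra factor $2^j$, only a fixed number of wavelets per level are nonzero at each $x$, and the geometric sum over levels is dominated by the finest level $j=J-1$.

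The upper bound is then routine: dropping $b^2\geq 0$ gives $\sqrt{ac-b^2}/a\leq\sqrt{c/a}$, and the eigenvalue bounds on $\boldsymbol{M}$ (with $\lambda_{\mathrm{max}}(\boldsymbol{M})/\lambda_{\mathrm{min}}(\boldsymbol{M})\asymp 1$) turn this into $\sqrt{c/a}\lesssim\|\boldsymbol{\dot{\psi}}_J(x)\|/\|\boldsymbol{\psi}_J(x)\|\lesssim 2^{3J/2}/2^{J/2}=2^J$ pointwise, so that $|\beta_J|\lesssim 2^J$.

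The lower bound is the crux, since the cross term $b$ cannot simply be discarded; I must show that $\boldsymbol{\dot{\psi}}_J$ has a substantial component $\boldsymbol{M}$-orthogonal to $\boldsymbol{\psi}_J$. My plan is to first remove the weighting: since $\boldsymbol{M}\succeq\lambda_{\mathrm{min}}(\boldsymbol{M})\boldsymbol{I}$, Loewner monotonicity of the $2\times 2$ determinant applied to $W^T\boldsymbol{M}W$ with $W=[\boldsymbol{\psi}_J,\boldsymbol{\dot{\psi}}_J]$ yields $ac-b^2\geq\lambda_{\mathrm{min}}(\boldsymbol{M})^2(\|\boldsymbol{\psi}_J\|^2\|\boldsymbol{\dot{\psi}}_J\|^2-\langle\boldsymbol{\psi}_J,\boldsymbol{\dot{\psi}}_J\rangle^2)$, while $a\leq\lambda_{\mathrm{max}}(\boldsymbol{M})\|\boldsymbol{\psi}_J\|^2$. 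As $\lambda_{\mathrm{max}}(\boldsymbol{M})/\lambda_{\mathrm{min}}(\boldsymbol{M})\asymp 1$, this reduces the claim to showing that the \emph{unweighted} normalized curve $\widetilde{\beta}_J(x)=\boldsymbol{\psi}_J(x)/\|\boldsymbol{\psi}_J(x)\|$ has Euclidean arc length $\gtrsim 2^J$. For that I would bound the arc length below by the contribution of the finest-level coordinates: at each $x$ at most a fixed number $C$ of the $\psi_{J-1,k}$ are nonzero, so $\|\widetilde{\beta}_J'(x)\|\geq C^{-1/2}\sum_k|(\psi_{J-1,k}/\|\boldsymbol{\psi}_J\|)'(x)|$, and integrating turns the right side into $C^{-1/2}$ times the sum of the total variations of the finest-level normalized coordinates. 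Each interior finest-level coordinate rises from $0$ at the edge of its support to a value of constant order at the peak of $\psi$ (using $\|\boldsymbol{\psi}_J(x)\|\asymp 2^{J/2}$ and $\max|\psi_{J-1,k}|\asymp 2^{J/2}$), hence has total variation bounded below by a universal constant; summing over the $\asymp 2^{J-1}$ interior wavelets at the finest level gives $|\beta_J|\gtrsim 2^J$. The main obstacle is exactly this step — neutralizing the cross term $b$ — and the device that resolves it is the passage to the unweighted curve via Loewner monotonicity followed by the coordinatewise total-variation estimate; the boundary-corrected CDV wavelets enter only through the harmless remark that all but $O(1)$ of the finest-level basis functions are exact translates of $\psi$.
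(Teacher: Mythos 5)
Your proposal is correct, but the lower bound proceeds by a genuinely different route than the paper's. The paper never reduces the integrand of \eqref{eq:betaW} to the scalar speed $\sqrt{ac-b^2}/a$; instead it sandwiches the numerator quadratic form $\boldsymbol{\psi}_J(x)^T\boldsymbol{M}\boldsymbol{R}^T\boldsymbol{M}\boldsymbol{R}\boldsymbol{M}\boldsymbol{\psi}_J(x)$ between constant multiples of $2^{5J}$ via Lemma \ref{lem:quad} and Corollary \ref{cor:quad}, whose lower bound requires constructing an explicit test vector $\boldsymbol{U}(x)$ orthogonal to $\boldsymbol{\psi}_J(x)$ --- built from the swapped pair $(\psi_{j,i_x+1}(x),-\psi_{j,i_x}(x))$ at every level --- and hinges on the nonvanishing of the Wronskian-type constant $C_2=\psi(2^jx-i_x-1)\psi'(2^jx-i_x)-\psi(2^jx-i_x)\psi'(2^jx-i_x-1)$, justified there only by appeal to $\boldsymbol{R}\neq\boldsymbol{0}$; the denominator is then bounded separately through $l_{2,J}$ and $\inf_{x}\|\boldsymbol{\psi}_J(x)\|$ from Lemma \ref{lem:w2}. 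You instead compute the integrand exactly as the spherical speed, strip the weight $\boldsymbol{M}$ by Loewner monotonicity of the $2\times2$ determinant ($\det(W^T\boldsymbol{M}W)\geq\lambda_{\mathrm{min}}(\boldsymbol{M})^2\det(W^TW)$, legitimate since $A\succeq B\succeq0$ implies $\det A\geq\det B$), exploiting that \eqref{eq:WWO} makes $\boldsymbol{M}$ well-conditioned with $\lambda_{\mathrm{max}}(\boldsymbol{M})/\lambda_{\mathrm{min}}(\boldsymbol{M})=O(1)$, and then handle the residual geometry by the total-variation count over the $\asymp2^{J-1}$ interior finest-level wavelets, each normalized coordinate rising from $0$ at the edge of its support to constant order at its peak thanks to the uniform two-sided bound $\|\boldsymbol{\psi}_J(x)\|\asymp2^{J/2}$ of Lemma \ref{lem:w2}. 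What each approach buys: yours sidesteps the paper's delicate $C_2\neq0$ step entirely (the total-variation argument is insensitive to points where $\boldsymbol{\dot{\psi}}_J(x)$ is parallel to $\boldsymbol{\psi}_J(x)$, where the paper's pointwise bound degenerates) and is more self-contained; the paper's Lemma \ref{lem:quad} and Corollary \ref{cor:quad}, by contrast, are stated for a general symmetric $\boldsymbol{\Sigma}$ precisely because they are reused in Lemmas \ref{lem:curve} and \ref{lem:curvew} with $\boldsymbol{\Sigma}=\boldsymbol{M\Omega}^{-1}\boldsymbol{M}$, so the paper extracts more mileage from its single quadratic-form estimate, whereas your determinant trick is tied to the condition number of the weight being $O(1)$ --- true here, but not portable to badly conditioned weights. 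Your upper bound ($\sqrt{ac-b^2}/a\leq\sqrt{c/a}\lesssim\|\boldsymbol{\dot{\psi}}_J(x)\|/\|\boldsymbol{\psi}_J(x)\|\lesssim2^J$ pointwise) agrees in substance with the paper's, resting on the same derivative estimate $\|\boldsymbol{\dot{\psi}}_J(x)\|^2\lesssim\sum_{j}2^{3j}\lesssim2^{3J}$.
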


\begin{lemma}\label{lem:curvew}
For any $J\in\mathcal{J}$, there exist constants $\xi_1,\xi_2,\xi_3>0$ such that
\begin{align*}
|\beta_{0,J}|\leq\frac{\left[1-\xi_1n^{-1}+\xi_2n^{-2}\right]^{1/2}}{\left[1-\xi_3n^{-1}\right]^{3/2}}|\beta_J|.
\end{align*}
Consequently, $|\beta_{0,J}|\leq|\beta_J|$ as $n\to\infty$.
\end{lemma}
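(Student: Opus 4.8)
The plan is to compare the two arc-length integrands, which differ only through the shrinkage matrix entering \eqref{eq:betaW}: $\beta_J$ is built from $\boldsymbol{M}=(\boldsymbol{\Psi}^T\boldsymbol{\Psi}+\boldsymbol{\Omega}^{-1})^{-1}$ while $\beta_{0,J}$ is built from $\boldsymbol{M}_0=(\boldsymbol{\Psi}^T\boldsymbol{\Psi}+\boldsymbol{\Omega}^{-1})^{-1}\boldsymbol{\Psi}^T\boldsymbol{\Psi}(\boldsymbol{\Psi}^T\boldsymbol{\Psi}+\boldsymbol{\Omega}^{-1})^{-1}$. First I would record the algebraic identity $\boldsymbol{M}_0=\boldsymbol{M}-\boldsymbol{M}\boldsymbol{\Omega}^{-1}\boldsymbol{M}=\boldsymbol{M}^{1/2}(\boldsymbol{I}-\boldsymbol{E})\boldsymbol{M}^{1/2}$ with $\boldsymbol{E}:=\boldsymbol{M}^{1/2}\boldsymbol{\Omega}^{-1}\boldsymbol{M}^{1/2}\succeq\boldsymbol{0}$, which follows by writing $\boldsymbol{\Psi}^T\boldsymbol{\Psi}=\boldsymbol{M}^{-1}-\boldsymbol{\Omega}^{-1}$. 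Combining the prior bound \eqref{eq:prior} with the wavelet Gram bound (where $\lambda_{\mathrm{min}}(\boldsymbol{\Psi}^T\boldsymbol{\Psi})$ and $\lambda_{\mathrm{max}}(\boldsymbol{\Psi}^T\boldsymbol{\Psi})$ are both of order $n$, i.e.\ $\lambda_{\boldsymbol{\Psi},J}=n$), the eigenvalues of $\boldsymbol{M}$ are of order $n^{-1}$, so that all eigenvalues of $\boldsymbol{E}$ lie in $[e_1/n,e_2/n]$ for constants $0<e_1\le e_2$. In particular $\boldsymbol{I}-\boldsymbol{E}$ is a uniformly small perturbation of the identity and $\boldsymbol{M}_0\preceq\boldsymbol{M}$.

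Next I would set $\boldsymbol{c}(x)=\boldsymbol{M}^{1/2}\boldsymbol{\psi}_J(x)$, $\boldsymbol{\dot{c}}(x)=\boldsymbol{M}^{1/2}\boldsymbol{\dot{\psi}}_J(x)$, and $\boldsymbol{F}=(\boldsymbol{I}-\boldsymbol{E})^{1/2}$. A direct computation reduces the integrand of \eqref{eq:betaW} for $\beta_J$ to the spherical speed $\sqrt{N}/\|\boldsymbol{c}\|^2$, where $N=\|\boldsymbol{c}\|^2\|\boldsymbol{\dot{c}}\|^2-(\boldsymbol{c}^T\boldsymbol{\dot{c}})^2$ is the Gram determinant of $(\boldsymbol{c},\boldsymbol{\dot{c}})$, while the $\beta_{0,J}$ integrand becomes $\sqrt{N_0}/\|\boldsymbol{F}\boldsymbol{c}\|^2$ with $N_0$ the Gram determinant of $(\boldsymbol{F}\boldsymbol{c},\boldsymbol{F}\boldsymbol{\dot{c}})$. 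Hence the pointwise ratio of integrands is $(N_0/N)^{1/2}\,\|\boldsymbol{c}\|^2/\|\boldsymbol{F}\boldsymbol{c}\|^2$. For the denominator I would use the Rayleigh estimate $\|\boldsymbol{F}\boldsymbol{c}\|^2=\|\boldsymbol{c}\|^2-\boldsymbol{c}^T\boldsymbol{E}\boldsymbol{c}\ge(1-\lambda_{\mathrm{max}}(\boldsymbol{E}))\|\boldsymbol{c}\|^2\ge(1-e_2/n)\|\boldsymbol{c}\|^2$, which after accounting for the $3/2$-power in \eqref{eq:betaW} supplies the factor $(1-\xi_3 n^{-1})^{-3/2}$ with $\xi_3=e_2$. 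For the numerator I would invoke the elementary area-contraction bound, namely that the factor by which $\boldsymbol{F}$ scales two-dimensional volumes is at most the product of its two largest singular values, all of which are $\le1$; this gives $N_0\le(1-\lambda_{\mathrm{min}}(\boldsymbol{E}))^2 N\le(1-e_1/n)^2 N$, hence the factor $[1-\xi_1 n^{-1}+\xi_2 n^{-2}]^{1/2}$ with $\xi_1=2e_1$ and $\xi_2=e_1^2$. Integrating the pointwise ratio over $[0,1]$ yields the displayed inequality, exactly as in the B-splines Lemma \ref{lem:curve} but with $J/n$ replaced by $1/n$ since here $\lambda_{\boldsymbol{\Psi},J}=n$.

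The hard part is the final clause $|\beta_{0,J}|\le|\beta_J|$. The product factor just obtained equals $1+[(3/2)e_2-e_1]n^{-1}+O(n^{-2})$, which exceeds $1$ whenever $\boldsymbol{E}$ is anisotropic ($e_1<e_2$): the decoupled spectral bounds are too lossy to conclude the inequality, since the numerator benefits from the smallest eigenvalue of $\boldsymbol{E}$ while the denominator is penalised by the largest. The sharp statement rests on a cancellation that these bounds miss. Expanding the exact squared-integrand ratio to first order in $\boldsymbol{E}$ and decomposing the unit velocity into its components along and orthogonal to $\boldsymbol{c}$, one finds that the leading discrepancy $\text{integrand}_0^2-\text{integrand}^2$ is proportional to $(1-\rho^2)(\epsilon-\omega)$, where $\rho$ is the cosine between $\boldsymbol{c}$ and $\boldsymbol{\dot{c}}$, $\epsilon=\widehat{\boldsymbol{c}}^T\boldsymbol{E}\widehat{\boldsymbol{c}}$ is the Rayleigh quotient of $\boldsymbol{E}$ in the position direction, and $\omega$ is its Rayleigh quotient in the orthogonal direction of motion. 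This term vanishes identically when $\boldsymbol{E}$ is scalar, which is precisely the case in which $\boldsymbol{M}_0$ and $\boldsymbol{M}$ differ by a constant and the two curves coincide. I would therefore exploit that for CDV wavelets $\boldsymbol{\Psi}^T\boldsymbol{\Psi}$ is nearly isotropic (it is of order $n\boldsymbol{I}$ with $\|F_n-F\|_\infty=O(1/n)$), so $\boldsymbol{E}=n^{-1}\boldsymbol{\Omega}^{-1}+O(n^{-2})$ is a vanishing, nearly-scalar perturbation whose residual anisotropy is $O(n^{-1})$ with magnitude controlled by the spectral spread of $\boldsymbol{\Omega}^{-1}$; after multiplication by the diverging length $|\beta_J|\asymp2^J$ from Lemma \ref{lem:cgammaw}, this is dominated and gives $|\beta_{0,J}|\le|\beta_J|$ for all large $n$. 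Controlling the sign of $\int(\text{speed})^2(\epsilon-\omega)\,dx$ — rather than the routine algebra of the first two steps — is what I expect to be the genuine obstacle.
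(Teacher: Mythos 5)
Your reduction of the arc-length integrand in \eqref{eq:betaW} to the Gram-determinant form $\sqrt{N}/\|\boldsymbol{c}\|^2$ with $\boldsymbol{c}=\boldsymbol{M}^{1/2}\boldsymbol{\psi}_J(x)$, and of the $\boldsymbol{M}_0$-integrand to $\sqrt{N_0}/\|\boldsymbol{F}\boldsymbol{c}\|^2$ with $\boldsymbol{F}=(\boldsymbol{I}-\boldsymbol{E})^{1/2}$, is correct: the quantities involved depend only on inner products through $\boldsymbol{M}_0=\boldsymbol{G}^T\boldsymbol{G}$ with $\boldsymbol{G}=\boldsymbol{F}\boldsymbol{M}^{1/2}$, so the orthogonal ambiguity in the matrix square root is harmless. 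Your two estimates, the Rayleigh bound $\|\boldsymbol{F}\boldsymbol{c}\|^2\geq(1-\lambda_{\mathrm{max}}(\boldsymbol{E}))\|\boldsymbol{c}\|^2$ and the exterior-power bound $\sqrt{N_0}\leq\sigma_1(\boldsymbol{F})\sigma_2(\boldsymbol{F})\sqrt{N}\leq(1-\lambda_{\mathrm{min}}(\boldsymbol{E}))\sqrt{N}$, combined with $\lambda(\boldsymbol{E})\asymp n^{-1}$ from \eqref{eq:prior} and \eqref{eq:WWO}, do yield the displayed inequality uniformly in $x$. This matches the skeleton of the paper's proof (numerator bounded up, denominator bounded down, both uniformly, then integrate), but your numerator bound is genuinely cleaner: the paper instead expands the two flanking $\boldsymbol{M}_0$'s into $\boldsymbol{M}-\boldsymbol{M}\boldsymbol{\Omega}^{-1}\boldsymbol{M}$, bounds the resulting cross ratio from below and the quadratic ratio from above, and for the cross term needs the trace inequality of Lemma \ref{lem:tr} together with the two-sided quadratic-form bounds of Corollary \ref{cor:quad}, whose wavelet case requires constructing the orthogonal vectors $\boldsymbol{U}(x)$. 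Your area-contraction argument bypasses all of that machinery.

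Where you part ways with the paper is the final clause. The paper reads $|\beta_{0,J}|\leq|\beta_J|$ directly off the displayed factor, and your observation that with untracked constants this requires $\xi_1\geq3\xi_3$ at first order is a fair criticism of that step; neither your bounds nor the paper's verify such a relation. But your proposed repair does not work and is not what the paper does. The near-isotropy claim $\boldsymbol{E}=n^{-1}\boldsymbol{\Omega}^{-1}+O(n^{-2})$ would require $\boldsymbol{\Psi}^T\boldsymbol{\Psi}=n\boldsymbol{I}+O(1)$, whereas the model only guarantees eigenvalues between $C_1n$ and $C_2n$ with possibly different constants; even granting it, the spectral spread of $\boldsymbol{\Omega}^{-1}$ is of order one under \eqref{eq:prior}, so the anisotropy of $\boldsymbol{E}$ is of the same order $n^{-1}$ as $\boldsymbol{E}$ itself, and the sign of your $\int(\text{speed})^2(\epsilon-\omega)\,dx$ term remains uncontrolled; and the closing step, that multiplication by the diverging length $|\beta_J|\asymp2^J$ "dominates" the discrepancy, is a non sequitur, since multiplying a relative error of indeterminate sign by the total length does not produce an inequality. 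As you yourself concede, the obstacle is left open, so your proposal does not prove the final clause as stated. Note, however, that the exact inequality is more than is needed downstream: the proof of Theorem \ref{th:credible} only uses $|\beta_{0,j}|\pi^{-1}e^{-w_{\gamma,j}^2/2}\leq\gamma$ up to $o(1)$, and the bound $|\beta_{0,J}|\leq(1+O(n^{-1}))|\beta_J|$ that you did establish already gives coverage at least $1-\gamma(1+O(n^{-1}))=1-\gamma+o(1)$, which is the paper's conclusion; chasing the sharp sign of the anisotropy term is unnecessary for the lemma's purpose.
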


Let us now describe the set $\mathcal{F}$ used in the first condition. Define $\delta_r$ to be a positive constant that depends on $r$ the regularity of the CDV wavelets. Let $\{V_j:j\in\mathbb{Z}\}$ generate a multiresolution analysis of $L_2([0,1])$ associated with these wavelets, such that $V_{j-1}\subset V_j$, $\bigcap_{j\in\mathbb{Z}}V_j=\{0\}$ and $\bigcup_{j\in\mathbb{Z}}V_j$ is dense in $L_2([0,1])$. For a fixed $J_0\in\mathbb{N}$, we take $\mathcal{F}$ to be
\begin{align}
\mathcal{F}^{\alpha}=\bigcap_{j\geq J_0}\left\{f:\inf_{p\in V_j}\|p-f\|_\infty\geq\delta_r\|f\|_{\mathcal{B}^{\alpha}_{\infty,\infty}}2^{-j\alpha}\right\}.
\end{align}
Equipped with this set, we can use the same reasoning as in the proof of Lemma \ref{lem:lower} to deduce the following lemma, which serves to fulfil the first condition of Theorem \ref{th:credible} for the CDV wavelet bases.
\begin{lemma}\label{lem:lowerw}
Let $\kappa\in\mathbb{N}$ be some large enough constant depending on $\delta_r,\alpha$ and $\tau$. Then for any $R>0$ and $j_{\mathrm{min}}\geq J_0$, there are constants $\mu>0$ such that
\begin{align*}
\sup_{0<\alpha\leq r}\sup_{f_0\in\mathcal{F}^{\alpha}}P_0(\widehat{j}_n<j_n^{*}-\kappa)\lesssim e^{-\mu j_{\mathrm{min}}},
\end{align*}
and consequently in view of Lemma 4.4, we have as $n\rightarrow\infty$,
\begin{align}
\inf_{0<\alpha\leq r}\inf_{f_0\in\mathcal{B}^{\alpha}_{\infty,\infty}(R)\bigcap\mathcal{F}^{\alpha}}P_0(j_n^{*}-\kappa\leq\widehat{j}_n\leq j_n^{*})\rightarrow1.
\end{align}
\end{lemma}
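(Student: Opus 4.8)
The plan is to mirror the proof of Lemma \ref{lem:lower}, adapting it to the wavelet indexing in which the working dimension is $2^J$, so that the undersmoothing threshold $j_n^*/\kappa$ of the B-spline case is replaced by $j_n^*-\kappa$. Throughout I fix $0<\alpha\le r$ and $f_0\in\mathcal{F}^\alpha$, and recall that $2^{j_n^*}$ balances the bias $\|f_0\|_{\mathcal{B}^\alpha_{\infty,\infty}}2^{-j\alpha}$ against the posterior standard deviation $\sqrt{2^j j/n}$, so that $\|f_0\|_{\mathcal{B}^\alpha_{\infty,\infty}}2^{-j_n^*\alpha}\asymp\sqrt{2^{j_n^*}j_n^*/n}$. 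First I would reduce the bad event to a single inequality: on $\{\widehat{j}_n<j_n^*-\kappa\}$ the stopping criterion is met at some level $j=\widehat{j}_n<j_n^*-\kappa$, so by minimality it holds for every $i>j$ in $\mathcal{J}$, and I simply instantiate it at $i=j_n^*$ to obtain
\[
\|\mathrm{E}_j(f|\boldsymbol{Y})-\mathrm{E}_{j_n^*}(f|\boldsymbol{Y})\|_\infty\le\tau\widehat{\sigma}_{j_n^*}\sqrt{j_n^*2^{j_n^*}/n}.
\]
The whole argument then consists in showing this inequality can hold only on an event of probability $\lesssim e^{-\mu j_{\mathrm{min}}}$.

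Next I would produce a deterministic lower bound on the left-hand side. Write each posterior mean as its frequentist mean plus a centred fluctuation, $\mathrm{E}_i(f|\boldsymbol{Y})=\mathrm{E}_0\mathrm{E}_i(f|\boldsymbol{Y})+Z_i$, where $Z_i(\cdot)$ is a mean-zero Gaussian process under $P_0$. The deterministic part equals the wavelet projection $K_i(f_0)$ up to a prior-shrinkage term of smaller order than the projection bias (the same bias bookkeeping already used for Theorem \ref{th:rate}). Using $f_0\in\mathcal{F}^\alpha$ and $j\ge j_{\mathrm{min}}\ge J_0$ gives the self-similar lower bound $\|K_j(f_0)-f_0\|_\infty\ge\inf_{p\in V_j}\|f_0-p\|_\infty\ge\delta_r\|f_0\|_{\mathcal{B}^\alpha_{\infty,\infty}}2^{-j\alpha}$, while \eqref{eq:approxw} gives $\|K_{j_n^*}(f_0)-f_0\|_\infty\le C_r\|f_0\|_{\mathcal{B}^\alpha_{\infty,\infty}}2^{-j_n^*\alpha}$. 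A reverse triangle inequality, together with $2^{-j\alpha}\ge 2^{\kappa\alpha}2^{-j_n^*\alpha}$ for $j<j_n^*-\kappa$, yields
\[
\|K_j(f_0)-K_{j_n^*}(f_0)\|_\infty\ge\big(\delta_r-C_r2^{-\kappa\alpha}\big)\|f_0\|_{\mathcal{B}^\alpha_{\infty,\infty}}2^{-j\alpha}\ge\tfrac{\delta_r}{2}\,2^{\kappa\alpha}\|f_0\|_{\mathcal{B}^\alpha_{\infty,\infty}}2^{-j_n^*\alpha}
\]
once $\kappa$ is large enough that $C_r2^{-\kappa\alpha}\le\delta_r/2$, and by the balance this is $\gtrsim 2^{\kappa\alpha}\sqrt{j_n^*2^{j_n^*}/n}$.

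Then I would force a large stochastic event and bound its probability. Inserting the previous bound and using $\widehat{\sigma}_{j_n^*}\le\sigma_0+o(1)$ from Proposition \ref{prop:sigma}, the deterministic signal $c\,2^{\kappa\alpha}\sqrt{j_n^*2^{j_n^*}/n}$ beats the right-hand side $\tau\widehat{\sigma}_{j_n^*}\sqrt{j_n^*2^{j_n^*}/n}$ as soon as $c\,2^{\kappa\alpha}>2\tau\sigma_0$; hence on $\{\widehat{j}_n<j_n^*-\kappa\}$ one of the fluctuations is large, $\|Z_j\|_\infty+\|Z_{j_n^*}\|_\infty\gtrsim 2^{\kappa\alpha}\sqrt{j_n^*2^{j_n^*}/n}$. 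Each $Z_i$ has pointwise variance of order $2^i/n$ and $\mathrm{E}\|Z_i\|_\infty\asymp\sqrt{i2^i/n}$, so the Borell--TIS inequality gives $P_0(\|Z_i\|_\infty>L\sqrt{i2^i/n})\lesssim e^{-c(L-C)^2 i}$. Because $\log(\dim)=\log 2^i\asymp i$, the squared standardized threshold is of order the level $i$ itself, which is exactly what turns the polynomial tail of the B-spline case into an exponential one here. Applying this at a threshold equal to a large multiple of the level-$j_n^*$ scale, summing over the finitely many $j\in[j_{\mathrm{min}},j_n^*-\kappa]$, and adding the super-exponentially small $\widehat{\sigma}$-consistency failure $e^{-Q2^{j_n^*}}$, the dominant contribution is $\lesssim j_n^* e^{-c' j_{\mathrm{min}}}\lesssim e^{-\mu j_{\mathrm{min}}}$, uniformly over $\mathcal{F}^\alpha$ and, after taking $\kappa$ large enough for the whole range, over $0<\alpha\le r$.

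Finally, the displayed two-sided conclusion follows by intersecting with the complementary overshoot bound: Proposition \ref{lem:upper} gives $P_0(\widehat{j}_n>j_n^*)\to0$, so together with $P_0(\widehat{j}_n<j_n^*-\kappa)\to0$ we obtain $\inf P_0(j_n^*-\kappa\le\widehat{j}_n\le j_n^*)\to1$. I expect the main obstacle to be the deterministic step: controlling the prior-shrinkage contribution to $\mathrm{E}_0\mathrm{E}_i(f|\boldsymbol{Y})$ and verifying it is genuinely negligible against the self-similar projection lower bound, and relatedly confirming that a single $\kappa$ (or a restriction of the $\alpha$-range) makes the signal dominate the noise threshold uniformly in $\alpha$.
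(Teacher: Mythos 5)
Your proposal is correct and, in its overall architecture, is exactly the paper's argument: the paper proves this lemma by invoking ``the same reasoning as in the proof of Lemma \ref{lem:lower}'', i.e.\ instantiate the stopping rule at $i=j_n^{*}$ on the event $\{\widehat{j}_n=j\}$ with $j<j_n^{*}-\kappa$, lower-bound the deterministic separation via the self-similarity of $\mathcal{F}^{\alpha}$ with $\kappa$ large, control the centred fluctuations by Gaussian concentration (Lemma \ref{lem:concentrate}) at deviation level $x\asymp\mu j$ --- your observation that $\log(\mathrm{dim})\asymp j$ is precisely what upgrades the polynomial tail $j_{\mathrm{min}}^{-(\mu-1)}$ of the B-spline case to $e^{-\mu j_{\mathrm{min}}}$ --- absorb $\widehat{\sigma}_{j_n^{*}}$ via Proposition \ref{prop:sigma} (with failure probability $e^{-Q2^{j_n^{*}}}$ after the $J\mapsto 2^J$ translation), and conclude the two-sided statement by combining with Proposition \ref{lem:upper}. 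The one place you deviate is the deterministic step, and it is exactly the obstacle you flag: the paper never compares $\mathrm{E}_0\mathrm{E}_j(f|\boldsymbol{Y})$ with the projection $K_j(f_0)$, so it never needs the prior-shrinkage contribution to be negligible at the lower index. Instead it uses a membership trick: $\mathrm{E}_0\widetilde{f}_j=\boldsymbol{\psi}_j(\cdot)^T(\boldsymbol{\Psi}^T\boldsymbol{\Psi}+\boldsymbol{\Omega}^{-1})^{-1}(\boldsymbol{\Psi}^T\boldsymbol{F}_0+\boldsymbol{\Omega}^{-1}\boldsymbol{\eta})$ is itself an element of $V_j$ (in the B-spline case, a spline of order $q$ with knots in $\mathcal{T}_n$), so whatever the shrinkage does,
\begin{align*}
\|\mathrm{E}_0\widetilde{f}_j-f_0\|_\infty\geq\inf_{p\in V_j}\|p-f_0\|_\infty\geq\delta_r\|f_0\|_{\mathcal{B}^{\alpha}_{\infty,\infty}}2^{-j\alpha},
\end{align*}
and shrinkage enters only at the upper index, where Lemma \ref{lem:pbias} gives $\|\mathrm{E}_0\widetilde{f}_{j_n^{*}}-f_0\|_\infty\lesssim 2^{j_n^{*}/2}/n+\|f_0\|_{\mathcal{B}^{\alpha}_{\infty,\infty}}2^{-j_n^{*}\alpha}\lesssim\sqrt{j_n^{*}2^{j_n^{*}}/n}$. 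Your route through $K_j(f_0)$ does close as well --- from the proof of Lemma \ref{lem:pbias}, $\|\mathrm{E}_0\widetilde{f}_j-K_j(f_0)\|_\infty\lesssim 2^{j/2}/n+o(1)\cdot\|f_0\|_{\mathcal{B}^{\alpha}_{\infty,\infty}}2^{-j\alpha}$, and $2^{j/2}/n=o(\sqrt{j_n^{*}2^{j_n^{*}}/n})$ on the relevant range --- but the membership argument makes that bookkeeping unnecessary, which is why the paper's version of the step is one line. As for your uniformity worry about $\kappa$: none is needed, since the lemma (like its B-spline counterpart) explicitly allows $\kappa$ to depend on $\alpha$ (as well as $\delta_r$ and $\tau$), so only the per-$\alpha$ requirement that $\delta_r2^{\kappa\alpha}$ dominate the $C_r$- and $\sigma_0\tau$-constants must hold; your alternative of restricting the $\alpha$-range is not what the paper does.
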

Now in view of \eqref{eq:approxw}, the self-referencing version is
\begin{align*}
\inf_{p\in\mathcal{V}_{j+1}}\|p-f_0\|_\infty\geq\delta_r\|f_0\|_{\mathcal{B}^{\alpha}_{\infty,\infty}}2^{-(j+1)\alpha}\geq\frac{\delta_r2^{-\alpha}}{C_r}\|K_j(f_0)-f_0\|_\infty\geq G\inf_{p\in\mathcal{V}_j}\|p-f_0\|_\infty,
\end{align*}
for $G:=\delta_r2^{-\alpha}/C_r$. As before, the uncertainty principle applies here in the wavelet case. That is, if we used a posterior based on wavelet random series at resolution $j$ and $f_0$ so happened is in $\mathcal{V}_j$ in the multiresolution analysis, then we are ``correct" in estimating the truth, and our posterior should contract to this truth at the optimal rate. However, $\inf_{p\in\mathcal{V}_j}\|p-f_0\|=0$ and we will not be able to quantify uncertainty honestly for this truth.

As a result, we can invoke Theorem \ref{th:credible} and conclude that for any $R>0$,
\begin{align}
\inf_{0<\alpha\leq r}\inf_{f_0\in\mathcal{B}^{\alpha}_{\infty,\infty}(R)\bigcap\mathcal{F}^{\alpha}}P_0(f_0\in\mathcal{W}_n)\geq1-\gamma+o(1).
\end{align}
By Lemmas \ref{lem:cgammaw}, \ref{lem:lowerw} and \ref{lem:w2}, we have the following CDV wavelet version of Corollary \ref{cor:radius} on the size of the radius function
\begin{align}
\inf_{0<\alpha\leq r}\inf_{f_0\in\mathcal{B}^{\alpha}_{\infty,\infty}(R)\bigcap\mathcal{F}^{\alpha}}P_0\left[r(x)\asymp(\log{n}/n)^{\alpha/(2\alpha+1)},\forall x\in[0,1]\right]\rightarrow1.
\end{align}

\section{Simulations}\label{sec:sim}
The right hand side of the Bayes Lepski stopping rule \eqref{eq:optimalj} is a measure of posterior variation in sup-norm, and it serves to check the increase in posterior mean bias on the left hand side. The expression presented however is asymptotic and the use of (maximal) eigenvalues at each iteration makes it not suitable for fast computations. Moreover, to use this rule and obtain sufficient coverage for our bands, we need to induce some form of undersmoothing such that we will on average choose $J$ that is larger than necessary. The difficulty part is to figure out a way to do this without knowing the true smoothness of $f_0$.

To achieve this, we need to find a good finite sample proxy on the right hand side, such that its asymptotic formula coincides with the one in \eqref{eq:optimalj}. There are several different choices for such proxies with the same large sample property, and we choose the one which tends to give smaller values on average. For our simulations, we found that $V_i:=\widehat{\sigma}_i\sqrt{\inf_{x\in[0,1]}\boldsymbol{a}_i(x)^T(\boldsymbol{A}^T\boldsymbol{A}+\boldsymbol{\Omega}^{-1})^{-1}\boldsymbol{a}_i(x)}\sqrt{\log{j}}$ works. Here we set $\tau=1$ for the sake of algorithmic simplicity, since a large $\tau$ is there for purely technical reason (to ensure exponent of certain powers of $n$ is negative and so they approach $0$ as $n\to\infty$). Based on this line of reasoning, we then substitute $V_i$ as the right hand side of \eqref{eq:optimalj}, and this give the following practical version in pseudocode for credible bands computation using the Bayes Lepski procedure.\vspace{10pt}

\begin{algorithm}[H]
Initialize $j_{\mathrm{max}}=n/\log^2{n}$ and $j_{\mathrm{min}}=(n/\log{n})^{1/(2\upsilon+1)}$\;
Set $j=j_{\mathrm{max}}-1$\;
\While{$j\geq j_{\mathrm{min}}$}{
 rule $\leftarrow0$\;
 \For{$i$: $j<i\leq j_{\mathrm{max}}$}{
  rule $\leftarrow$ rule $+\mathbbm{1}\left\{\|\mathrm{E}_j(f|\boldsymbol{Y})-\mathrm{E}_i(f|\boldsymbol{Y})\|_\infty>V_i\right\}$;
 }
 \eIf{$\mathrm{rule }>0$}{
 $\widehat{j}_n\leftarrow j+1$\;
 break\;
 }
 {
 $j\leftarrow j-1$\;
 }
}
\caption{The Bayes Lepski algorithm (undersmoothing version)}
\end{algorithm}\vspace{10pt}

We store values of $\mathrm{E}_i(f|\boldsymbol{Y})$ and $V_i$ at each iteration and they are used again for comparison at a lower $j$ if needed. The computation of these two quantities is made easier by the bandedness of $\boldsymbol{A}^T\boldsymbol{A}$ and the compact support property of the bases utilized.

The true function is taken to be $f_0(x)=2x-x^3+\exp\{-50(x-0.5)^2\}$ for $x\in[0,1]$, and we observed $Y_i=f_0(X_i)+\varepsilon_i$ such that $X_i=i/n,i=1,\dotsc,n$ and $\sigma_0^2=0.1$. We will use cubic B-splines ($q=4$) with uniformly distributed knots. For the Gaussian prior on the coefficients, we set $\boldsymbol{\eta}=\boldsymbol{0}$ and $\boldsymbol{\Omega}=10\boldsymbol{I}$. We estimate $\sigma_0^2$ using empirical Bayes with formula given in \eqref{eq:sigma}, and we compute $J=\widehat{j}_n$ using the Bayes Lepski algorithm given above. Once this is done, we construct the B-splines volume of tube credible band as in \eqref{eq:bsplineband}, and we calculate the quantile $w_{\gamma,\widehat{j}_n}$ by computing the arc length \eqref{eq:betaB} and solving the equation relating them. We set the credibility level $1-\gamma$ as $0.95$ and see whether the resulting credible band has at least that amount of coverage probability. To highlight the benefits of the Bayes Lepski and volume of tube construction, we compare it against two other methods: the frequentist version where least squares is used instead of the posterior mean, and the $L_\infty$-ball around the posterior mean with (non-inflated) fixed radius that was considered in \citep{yoo2016}. All computations were carried in the statistical software \texttt{R} and the code can be found in the first author's web page (type William Weimin Yoo in search engines).

Table \ref{tab:coverage} shows coverage probabilities computed for all 3 methods, where these probabilities are calculated empirically by repeating the experiment $1000$ times and recording whether $f_0$ lies entirely within the credible bands for each Monte Carlo iterate.

\begin{table}[htbp]
  \centering
  \caption{Coverage probabilities for all 3 methods. Nominal level is $0.95$.}
    \begin{tabular}{ccccccc}
    \toprule
    $n$     & \textbf{50} & \textbf{100} & \textbf{300} & \textbf{500} & \textbf{1000} & \textbf{2000} \\
    \midrule
    Bayes Lepski & 0.908 & 0.937 & 0.943 & 0.958 & 0.955 & 0.957 \\
    Frequentist  & 0.87  & 0.924 & 0.945 & 0.947 & 0.956 & 0.958 \\
    Fixed radius  & 0.919 & 0.938 & 0.953 & 0.955 & 0.941 & 0.946 \\
    \bottomrule
    \end{tabular}%
  \label{tab:coverage}%
\end{table}%

All methods have at least $0.95$ coverage for large samples and the results are comparable in this case. For very small samples, the Bayes Lepski (and also the Bayes fixed radius) has slightly higher coverage than the frequentist Lepski procedure. At first glance, it seems that the Bayes Lepski and the fixed radius method have comparable performance. But before making any hasty conclusions, let us take a look at the $1000$ Monte Carlo mean radius for each method visualized as box-plots in Figure \ref{fig:radius}.

\begin{figure}[h!]
\centering
\begin{subfigure}[b]{0.32\textwidth}
\includegraphics[width=\textwidth]{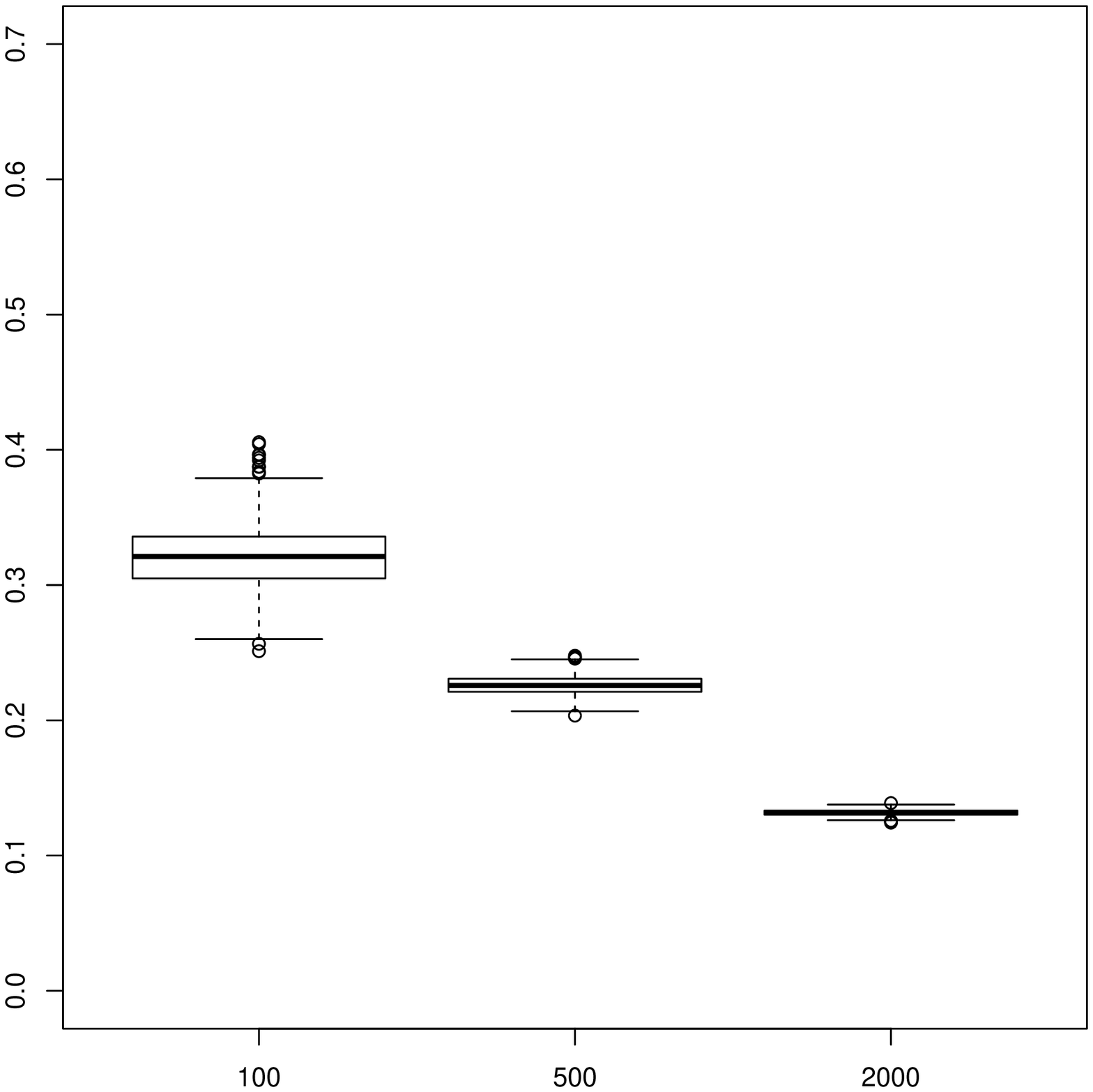}
\caption{Bayes Lepski}
\end{subfigure}
\begin{subfigure}[b]{0.32\textwidth}
\includegraphics[width=\textwidth]{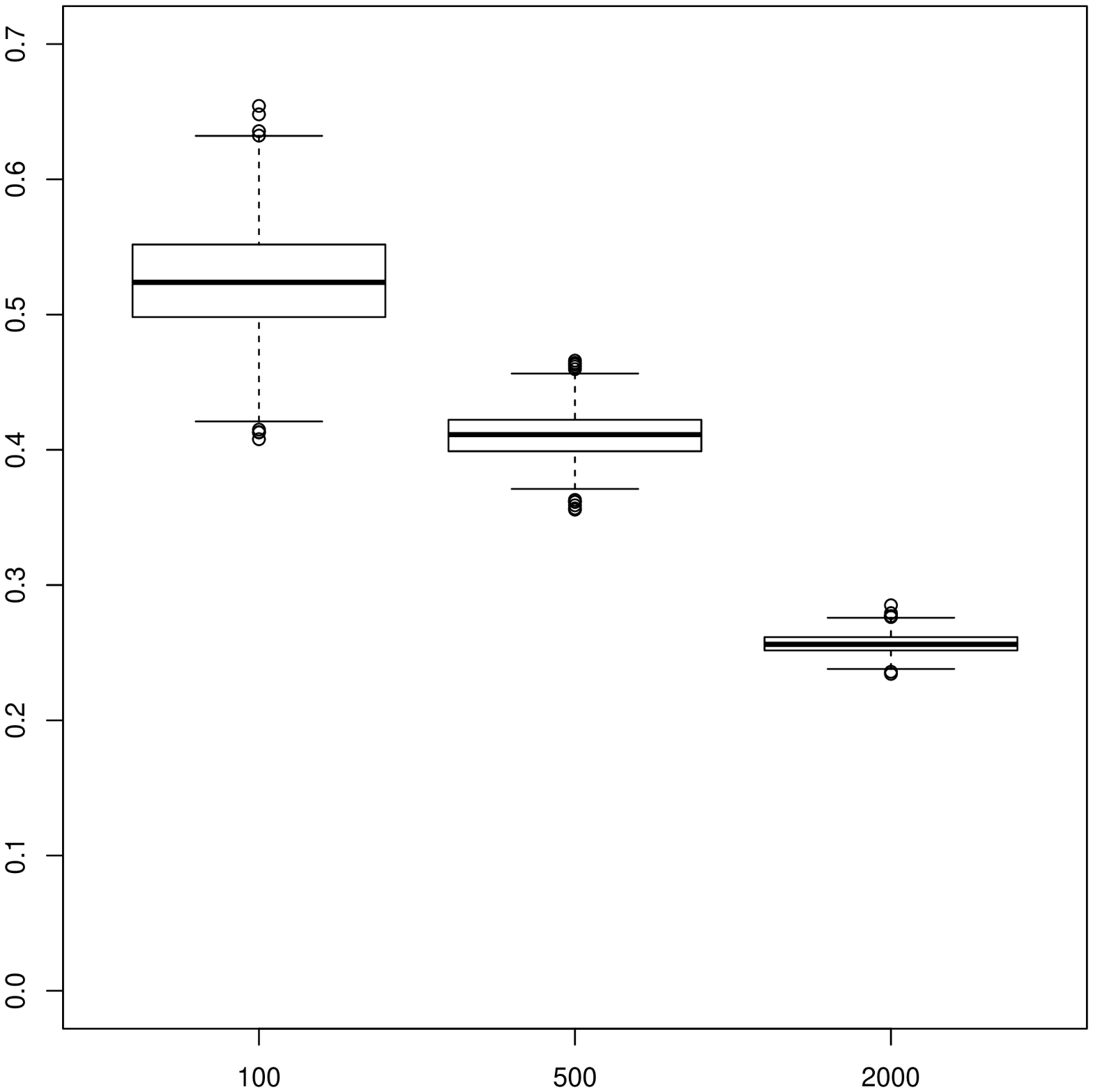}
\caption{Fixed $L_\infty$-ball}
\end{subfigure}
\begin{subfigure}[b]{0.32\textwidth}
\includegraphics[width=\textwidth]{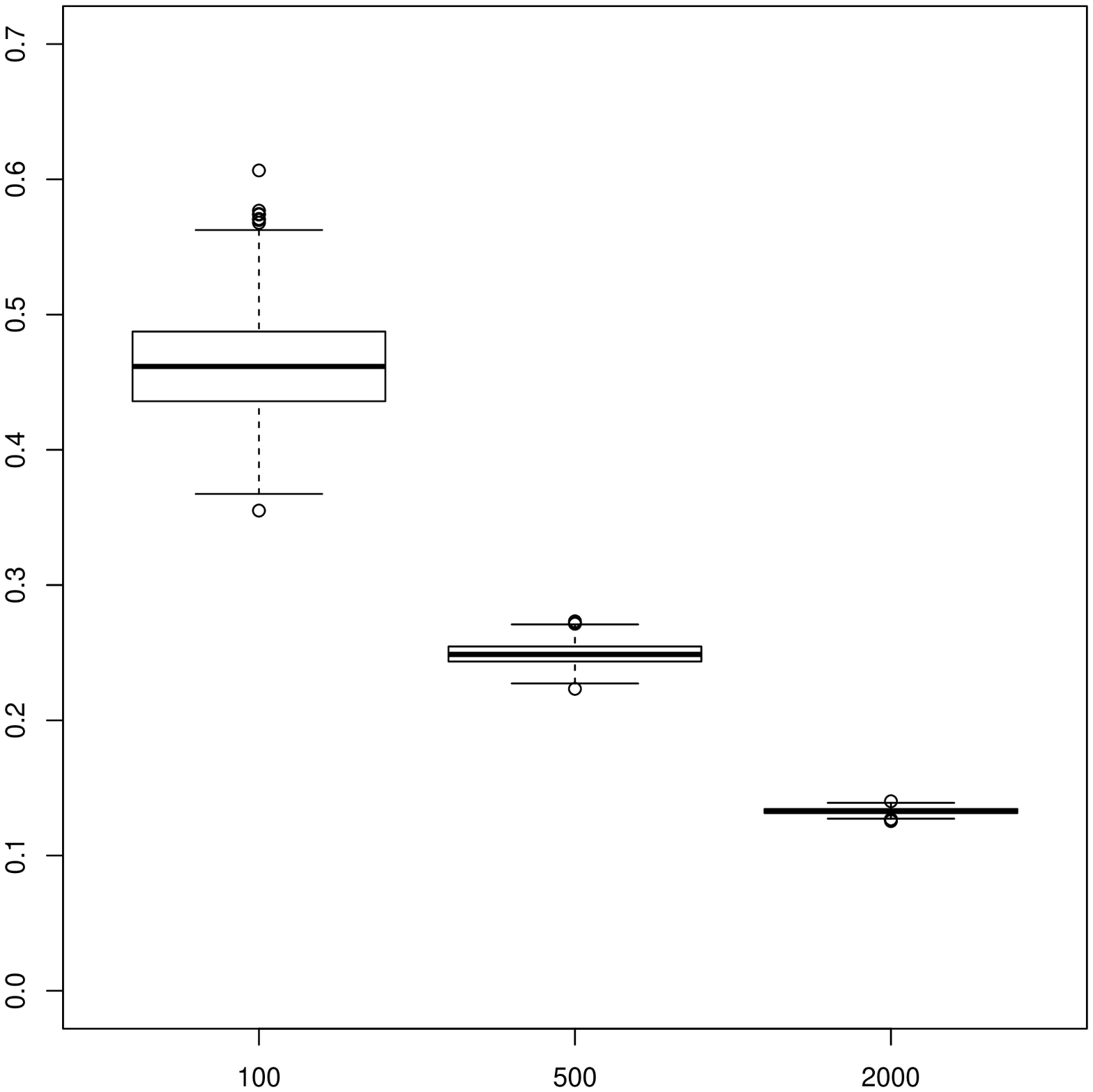}
\caption{Freq. Lepski}
\end{subfigure}
\caption{1000 Monte Carlo mean radius for $n=100,500,2000$}
\label{fig:radius}
\end{figure}

We see that the Bayes Lepski with volume of tube method has narrower bands on average when compared to the fixed $L_\infty$-ball (roughly half of its size). The frequentist volume of tube method has slightly larger bands for small sample sizes i.e., $n=100$ but is otherwise comparable to the Bayesian counterpart for larger sample sizes considered. For all methods, the radius decreases with $n$ as expected. The table and figure together show that the slight deterioration in coverage and the sudden increase in radius of the frequentist Lepski at $n\leq100$ is due to random fluctuations associated with small sample sizes. It appears that the presence of shrinkage through $\boldsymbol{\Omega}=10\boldsymbol{I}$ stabilizes this erratic fluctuations and resulting in a more predictable behavior in terms of the rate the radius decreases to $0$. Among the three methods, the Bayes fixed radius takes the longest time to compute the bands, this is due to the fact that it must further estimate the quantile empirically by drawing samples from the posterior, while the volume of tube method provides an explicit expression to compute the quantile through arc length calculations.

Simulation results in conjunction with our theoretical investigations concur that the Bayes Lepski volume of tube methods is a very promising Bayesian uncertainty quantification procedure. In particular, the Bayes Lepski rule by itself is flexible enough to be deployed in other statistical applications where tuning parameters need to be determined, for example the number of components in mixture models or spike-and-slab prior weights in high-dimensional linear regression. We envision that this rule together with the volume of tube method can be applied to other modeling situations, and this includes binary or Poisson regression and other generalized linear models. We hope to pursuit these research directions in the future.

\section{Proofs}\label{sec:proof}
Let us record here the common eigenvalue bound assumption and its corollary for ease of reference since they are be used frequently in the proofs. Recall that:
\begin{align}
\lambda_{\boldsymbol{A},J}&\lesssim\lambda_{\mathrm{min}}(\boldsymbol{A}^T\boldsymbol{A})\leq\lambda_{\mathrm{max}}(\boldsymbol{A}^T\boldsymbol{A})\lesssim\lambda_{\boldsymbol{A},J}.\label{eq:AA}
\end{align}
Observe that for symmetric and positive definite $\boldsymbol{T}$, $\|\boldsymbol{T}\|_{(2,2)}=\lambda_{\mathrm{max}}(\boldsymbol{T})$ and $\lambda_{\mathrm{max}}(\boldsymbol{T}^{-1})=\lambda_{\mathrm{min}}^{-1}(\boldsymbol{T})$ and vice versa. Now for $J\in\mathcal{J}$ in the candidate set, $\lambda_{\boldsymbol{A},J}$ will dominate any constants for large enough $n$ by assumption, and these facts together with the prior assumption in \eqref{eq:prior} imply that as $n\rightarrow\infty$,
\begin{align}\label{eq:AAO}
\lambda_{\boldsymbol{A},J}^{-1}\lesssim\lambda_{\mathrm{min}}\left\{(\boldsymbol{A}^T\boldsymbol{A}+\boldsymbol{\Omega}^{-1})^{-1}\right\}\leq
\lambda_{\mathrm{max}}\left\{(\boldsymbol{A}^T\boldsymbol{A}+\boldsymbol{\Omega}^{-1})^{-1}\right\}\lesssim\lambda_{\boldsymbol{A},J}^{-1}.
\end{align}
For notational simplicity, let us write $\widetilde{f}_J(x)=\mathrm{E}_J[f(x)|\boldsymbol{Y}]$ and $\widetilde{V}_J(x)=\mathrm{Var}_J[f(x)|\boldsymbol{Y}]$. Also, denote $\boldsymbol{\dot{a}}_J(x)=(a_1^{'}(x),\dotsc,a_J^{'}(x))^T$. For $J\in\mathcal{J}$, define $\mathcal{U}_n=\{\sigma:|\sigma-\sigma_0|\leq \xi\delta_{n,J}\}$ to be a shrinking neighborhood around $\sigma_0$ with $\delta_{n,J}=o(1)$ and the constant $\xi>0$ given in Proposition \ref{prop:sigma}.

\begin{proof}[Proof of Proposition \ref{prop:sigma}]
Let us define $U=n^{-1}(\boldsymbol{A\Omega A}^T+\boldsymbol{I}_n)^{-1}$ and $\boldsymbol{\varepsilon}=\boldsymbol{Y}-\boldsymbol{F}_0\sim\mathrm{N}(0,\sigma_0^2\boldsymbol{I}_n)$ where $\boldsymbol{F}_0=(f_0(X_1),\dotsc,f_0(X_n))^T$. For two square matrices of equal size, we say $\boldsymbol{A}<\boldsymbol{B}$ if $\boldsymbol{B}-\boldsymbol{A}$ is positive definite. Now by adding and subtracting $\boldsymbol{F}_0$ and $\mathrm{E}_0\left(\boldsymbol{\varepsilon}^T\boldsymbol{U\varepsilon}\right)$, we have $|\widehat{\sigma}_J^2-\sigma_0^2|$ is
\begin{align*}
|\boldsymbol{\varepsilon}^T\boldsymbol{U\varepsilon}-\mathrm{E}_0\left(\boldsymbol{\varepsilon}^T\boldsymbol{U\varepsilon}\right)+2(\boldsymbol{F}_0-\boldsymbol{A\eta})^T\boldsymbol{U\varepsilon}
+(\boldsymbol{F}_0-\boldsymbol{A\eta})^T\boldsymbol{U}(\boldsymbol{F}_0-\boldsymbol{A\eta})+\mathrm{E}_0\left(\boldsymbol{\varepsilon}^T\boldsymbol{U\varepsilon}\right)-\sigma_0^2|.
\end{align*}
Using the triangle inequality and the fact that $\mathrm{E}_0\left(\boldsymbol{\varepsilon}^T\boldsymbol{U\varepsilon}\right)=\mathrm{E}_0\mathrm{tr}\left(\boldsymbol{\varepsilon}^T\boldsymbol{U\varepsilon}\right)
=\mathrm{tr}\boldsymbol{U}\mathrm{E}_0\left(\boldsymbol{\varepsilon\varepsilon}^T\right)=\sigma_0^2\mathrm{tr}(\boldsymbol{U})$, the above is bounded by
\begin{align*}
|\boldsymbol{\varepsilon}^T\boldsymbol{U\varepsilon}-\mathrm{E}_0\left(\boldsymbol{\varepsilon}^T\boldsymbol{U\varepsilon}\right)|+2|(\boldsymbol{F}_0-\boldsymbol{A\eta})^T\boldsymbol{U\varepsilon}|+B_J,
\end{align*}
where $B_J:=|\sigma_0^2[\mathrm{tr}(\boldsymbol{U})-1]+(\boldsymbol{F}_0-\boldsymbol{A\eta})^T\boldsymbol{U}(\boldsymbol{F}_0-\boldsymbol{A\eta})|$. Now using the fact that for two random variables $X,Y$, $P(X+Y>a)\leq P(X>a/2)+P(Y>a/2)$ for any $a$, we can write
\begin{align}\label{eq:borell}
P_0(|\widehat{\sigma}_J^2-\sigma_0^2|>M\delta_{n,J})&\leq P_0\left(|\boldsymbol{\varepsilon}^T\boldsymbol{U\varepsilon}-\mathrm{E}_0\left(\boldsymbol{\varepsilon}^T\boldsymbol{U\varepsilon}\right)|>\frac{M\delta_{n,J}-B_J}{2}\right)\nonumber\\
&\quad+P_0\left(|(\boldsymbol{F}_0-\boldsymbol{A\eta})^T\boldsymbol{U\varepsilon}|>\frac{M\delta_{n,J}-B_J}{4}\right).
\end{align}
Let us first work on the second term and we start by bounding $B_J$. Note that since $\boldsymbol{U}<n^{-1}\boldsymbol{I}_n$, we have $\mathrm{tr}(\boldsymbol{U})<1$. Then in view of the inequality $(\boldsymbol{x}+\boldsymbol{y})^T\boldsymbol{T}(\boldsymbol{x}+\boldsymbol{y})\leq2\boldsymbol{x}^T\boldsymbol{Tx}+2\boldsymbol{y}^T\boldsymbol{Ty}$ for any positive definite $\boldsymbol{T}$, we have
\begin{align}\label{eq:BJ}
B_J\leq\sigma_0^2[1-\mathrm{tr}(\boldsymbol{U})]+2(\boldsymbol{F}_0-\boldsymbol{A\theta}_0)^T\boldsymbol{U}(\boldsymbol{F}_0-\boldsymbol{A\theta}_0)+2(\boldsymbol{\theta}_0-\boldsymbol{\eta})^T\boldsymbol{A}^T\boldsymbol{UA}
(\boldsymbol{\theta}_0-\boldsymbol{\eta}),
\end{align}
with $\boldsymbol{\theta}_0$ taken from \eqref{eq:approxG}. To proceed, we need the following result called the binomial inverse theorem (Theorem 18.2.8 of \citep{binomial}), and it says that for matrices $\boldsymbol{B},\boldsymbol{C},\boldsymbol{D},\boldsymbol{E}$ of conformable dimensions, we will have
\begin{align*}
(\boldsymbol{B}+\boldsymbol{CDE})^{-1}=\boldsymbol{B}^{-1}-\boldsymbol{B}^{-1}\boldsymbol{C}(\boldsymbol{D}^{-1}+\boldsymbol{EB}^{-1}
\boldsymbol{C})^{-1}\boldsymbol{EB}^{-1}.
\end{align*}
Hence by applying the above twice to $n\boldsymbol{U}$, we get
\begin{align}\label{eq:inverse}
n\boldsymbol{U}=\boldsymbol{I}_n-\boldsymbol{A}(\boldsymbol{A}^T\boldsymbol{A}+\boldsymbol{\Omega}^{-1})^{-1}\boldsymbol{A}^T
=\boldsymbol{I}_n-\boldsymbol{P}_{\boldsymbol{A}}+\boldsymbol{V},
\end{align}
where $\boldsymbol{P}_{\boldsymbol{A}}=\boldsymbol{A}(\boldsymbol{A}^T\boldsymbol{A})^{-1}\boldsymbol{A}^T$ is the orthogonal projection matrix and $\boldsymbol{V}=\boldsymbol{A}(\boldsymbol{A}^T\boldsymbol{A})^{-1}[\boldsymbol{\Omega}+(\boldsymbol{A}^T\boldsymbol{A})^{-1}]^{-1}
(\boldsymbol{A}^T\boldsymbol{A})^{-1}\boldsymbol{A}^T$.

For the first term in \eqref{eq:BJ}, we use \eqref{eq:inverse} to write
\begin{align*}
1-\mathrm{tr}(\boldsymbol{U})=n^{-1}\mathrm{tr}(\boldsymbol{I}_n-n\boldsymbol{U})=n^{-1}\mathrm{tr}(\boldsymbol{P}_{\boldsymbol{A}}-\boldsymbol{V})
\leq n^{-1}\mathrm{rank}(\boldsymbol{A})\leq J/n
\end{align*}
because $\boldsymbol{V}$ is positive definite. Since $\boldsymbol{U}<n^{-1}\boldsymbol{I}_n$, we have $\lambda_{\mathrm{max}}(\boldsymbol{U})<n^{-1}$, then using the inequality $\boldsymbol{x}^T\boldsymbol{Tx}\leq\lambda_{\mathrm{max}}(\boldsymbol{T})\|\boldsymbol{x}\|^2$ for any square matrix $\boldsymbol{T}$, we can bound the second term of \eqref{eq:BJ} by
\begin{align*}
\lambda_{\mathrm{max}}(\boldsymbol{U})\|\boldsymbol{F}_0-\boldsymbol{A\theta}_0\|^2\leq\|\boldsymbol{F}_0-\boldsymbol{A\theta}_0\|_\infty^2\leq C_0^2\|f_0\|_{\mathcal{G}}^2h(J)^2,
\end{align*}
in view of \eqref{eq:approxG} in the definition of $\mathcal{G}$. Next, observe that $(\boldsymbol{I}_n-\boldsymbol{P}_{\boldsymbol{A}})\boldsymbol{A}=\boldsymbol{0}$ and  $\boldsymbol{A}^T\boldsymbol{VA}=[\boldsymbol{\Omega}+(\boldsymbol{A}^T\boldsymbol{A})^{-1}]^{-1}<\boldsymbol{\Omega}^{-1}$. These facts then enable us to write the last term in \eqref{eq:BJ} as
\begin{align*}
n^{-1}(\boldsymbol{\theta}_0-\boldsymbol{\eta})^T\boldsymbol{A}^T\boldsymbol{VA}(\boldsymbol{\theta}_0-\boldsymbol{\eta})\leq
n^{-1}\lambda_{\mathrm{max}}(\boldsymbol{\Omega}^{-1})\|\boldsymbol{\theta}_0-\boldsymbol{\eta}\|^2
\leq(J/n)\lambda_{\mathrm{min}}^{-1}(\boldsymbol{\Omega})\|\boldsymbol{\theta}_0-\boldsymbol{\eta}\|^2_{\infty}\lesssim J/n,
\end{align*}
since $\|\boldsymbol{\eta}\|_\infty<\infty$ and $\lambda_{\mathrm{min}}^{-1}(\boldsymbol{\Omega})\leq c_1^{-1}$ by our prior assumption in \eqref{eq:prior}, and $\|\boldsymbol{\theta}_0\|_{\infty}<\infty$ is from the definition of $\mathcal{G}$. By considering these bounds together, we conclude that $B_J\leq Q_1[J/n+\|f_0\|_{\mathcal{G}}^2h(J)^2]$ for some constant $Q_1>0$.

Returning to the original task of controlling the second term in \eqref{eq:borell}, which we will do through the tail bound of a standard normal, we first note $(\boldsymbol{F}_0-\boldsymbol{A\eta})^T\boldsymbol{U\varepsilon}\sim\mathrm{N}[0,\sigma_0^2(\boldsymbol{F}_0-\boldsymbol{A\eta})^T
\boldsymbol{U}^2(\boldsymbol{F}_0-\boldsymbol{A\eta})]$. This variance can in turn be decomposed into two quadratic forms as in the last two terms of $B_J$ in \eqref{eq:BJ} above but with $\boldsymbol{U}^2$ in the middle, and the first is
\begin{align*}
\|\boldsymbol{U}(\boldsymbol{F}_0-\boldsymbol{A\theta}_0)\|^2\leq n\lambda_{\mathrm{max}}^2(\boldsymbol{U})\|\boldsymbol{F}_0-\boldsymbol{A\theta}_0\|_\infty^2\leq n^{-1}C_0^2\|f_0\|_{\mathcal{G}}^2h(J)^2,
\end{align*}
while the second is $\|\boldsymbol{AU}(\boldsymbol{\theta}_0-\boldsymbol{\eta})\|^2$. To deal with this term, recognize that $\boldsymbol{I}_n-\boldsymbol{P}_{\boldsymbol{A}}$ is idempotent and projects into the null space of $\boldsymbol{A}$, it then holds that $n^2\boldsymbol{A}^T\boldsymbol{U}^2\boldsymbol{A}=\boldsymbol{A}^T(\boldsymbol{I}_n-\boldsymbol{P}_{\boldsymbol{A}}+\boldsymbol{V})^2\boldsymbol{A}=\boldsymbol{A}^T\boldsymbol{V}^2\boldsymbol{A}<[\boldsymbol{\Omega}
+(\boldsymbol{A}^T\boldsymbol{A})^{-1}]^{-1}<\boldsymbol{\Omega}^{-1}$. Thus,
\begin{align*}
\|\boldsymbol{AU}(\boldsymbol{\theta}_0-\boldsymbol{\eta})\|^2\leq\lambda_{\mathrm{max}}(\boldsymbol{A}^T\boldsymbol{UA})\|\boldsymbol{\theta}_0-\boldsymbol{\eta}\|^2\lesssim J/n^2.
\end{align*}
As a conclusion, our analysis yield $\mathrm{Var}_0[(\boldsymbol{F}_0-\boldsymbol{A\eta})^T\boldsymbol{U\varepsilon}]\leq Q_2n^{-1}[J/n+\|f_0\|_{\mathcal{G}}^2h(J)^2]$ for some constant $Q_2>0$.

Let us take $\delta_{n,J}=\sqrt{J/n}+\|f_0\|_{\mathcal{G}}h(J)$ and note that $\delta_{n,J}\gg B_J$ for all $J\in\mathcal{J}$. Then using the tail bound $P(|Z|>z)\leq2e^{-z^2/(2\sigma^2)}$ for $Z\sim\mathrm{N}(0,\sigma^2)$ and $z\geq0$, we deduce that
\begin{align*}
P_0\left(|(\boldsymbol{F}_0-\boldsymbol{A\eta})^T\boldsymbol{U\varepsilon}|>\frac{M\delta_{n,J}-B_J}{4}\right)&\leq2\exp\left\{-\frac{(M\delta_{n,J}/2)^2}{8Q_2n^{-1}\left[J/n+\|f_0\|_{\mathcal{G}}^2h(J)^2\right]}\right\},
\end{align*}
and this is further bounded above by $2e^{-Q_3n}$ for some constant $Q_3>0$ when $n$ is large enough. It now remains to derive an equivalent exponential bound for the first term in \eqref{eq:borell} and this is accomplished via the Hanson-Wright inequality for quadratic forms (Theorem 1.1 of \citep{hw}), i.e., for $t\geq0$,
\begin{align}\label{eq:hw}
P_0\left(|\boldsymbol{\varepsilon}^T\boldsymbol{U\varepsilon}-\mathrm{E}_0\left(\boldsymbol{\varepsilon}^T\boldsymbol{U\varepsilon}\right)|>t\right)\leq2\exp\left[-Q_4\min\left(
\frac{t^2}{K^4\|\boldsymbol{U}\|^2_{\mathrm{HS}}},\frac{t}{K^2\|\boldsymbol{U}\|_{(2,2)}}\right)\right],
\end{align}
where $Q_4>0$ is some constant, $K$ is an upper bound for the Orlicz norm with Orlicz function $\psi_2(x)=e^{x^2}-1$ of $\varepsilon_i$, which we can take $K=\sqrt{8/3}\sigma_0$ by direct calculations, and $\|\boldsymbol{U}\|_{\mathrm{HS}}$ is the Hilbert-Schmidt or Frobenius norm of $\boldsymbol{U}$. We know that $\|\boldsymbol{U}\|_{(2,2)}<n^{-1}$, and $\|\boldsymbol{U}\|_{\mathrm{HS}}^2=\mathrm{tr}(\boldsymbol{U}^2)=n^{-2}\mathrm{tr}(\boldsymbol{I}_n-\boldsymbol{P}_{\boldsymbol{A}}+\boldsymbol{V}^2)=n^{-2}[n-J+\mathrm{tr}(\boldsymbol{V}^2)]$ since $\boldsymbol{I}_n-\boldsymbol{P}_{\boldsymbol{A}}$ is idempotent, of rank $n-J$, and projects into the null space of $\boldsymbol{A}$. Then by using the cyclic permutation of the trace operator and Lemma \ref{lem:tr},
\begin{align*}
\mathrm{tr}(\boldsymbol{V}^2)&\leq\lambda_{\mathrm{max}}\left\{(\boldsymbol{A}^T\boldsymbol{A})^{-1}\right\}\mathrm{tr}\left\{[\boldsymbol{\Omega}+(\boldsymbol{A}^T\boldsymbol{A})^{-1}]^{-1}(\boldsymbol{A}^T\boldsymbol{A})^{-1}[\boldsymbol{\Omega}+(\boldsymbol{A}^T\boldsymbol{A})^{-1}]^{-1}\right\}\\
&\leq\lambda_{\mathrm{min}}^{-1}(\boldsymbol{A}^T\boldsymbol{A})\mathrm{tr}(\boldsymbol{\Omega}^{-1})\lesssim J/\lambda_{\boldsymbol{A},J},
\end{align*}
where we deduce the second inequality by adding and subtracting $\boldsymbol{\Omega}$ to $(\boldsymbol{A}^T\boldsymbol{A})^{-1}$, and had used \eqref{eq:AA} with \eqref{eq:prior} for the last inequality. As a result, $\|\boldsymbol{U}\|_{\mathrm{HS}}^2\lesssim n^{-1}$. By collecting the intermediate results so far and substitute $t=(M\delta_{n,J}-B_J)/2$ into \eqref{eq:hw}, we can conclude that the first term in \eqref{eq:borell} is bounded by $2e^{-Q_5J}$ for some constant $Q_5>0$. Together with the normal tail bound above,
\begin{align*}
P_0(|\widehat{\sigma}_J^2-\sigma_0^2|>M\delta_{n,J})\leq2e^{-Q_5J}+2^{-Q_3n}\lesssim e^{-QJ},
\end{align*}
for some constant $Q>0$ as $n\rightarrow\infty$. The statement for $\widehat{\sigma}_J$ is then implied by
\begin{align*}
\left|\sqrt{\widehat{\sigma}_J^2}-\sqrt{\sigma_0^2}\right|=\frac{\left|\widehat{\sigma}_J^2-\sigma_0^2\right|}{\sqrt{\widehat{\sigma}_J^2}+\sqrt{\sigma_0^2}}\leq\frac{\left|\widehat{\sigma}_J^2-\sigma_0^2\right|}{\sigma_0}.
\end{align*}
The claim that $\delta_{n,J}\rightarrow0$ can be seen by $\delta_{n,J}\leq\sqrt{j_{\mathrm{max}}/n}+Rh(j_{\mathrm{min}})=o[(\log{n})^{-1/2}]+o(1)$ by definition of $j_{\mathrm{max}}$ and the assumption $h(j_{\mathrm{min}})=o(1)$.
\end{proof}

\begin{proof}[Proof of Proposition \ref{lem:upper}]
Let $j\in\mathcal{J}$ such that $j>j_n^{*}$. Then if $\widehat{j}_n=j$, this implies that for the previous element $j-1$, the corresponding sup-norm difference in posterior means must be greater than the $\tau\widehat{\sigma}_il_{1,i}\sqrt{\log{(i)}/\lambda_{\boldsymbol{A},i}}$ threshold. It follows that using a union bound, we will have
\begin{align}\label{eq:probj}
P_0(\widehat{j}_n=j)\leq\sum_{i\in\mathcal{J}:i\geq j}\left[P_0\left(\|\widetilde{f}_{j-1}-\widetilde{f}_i
\|_\infty>\tau\widehat{\sigma}_i l_{1,i}\sqrt{\frac{\log{i}}{\lambda_{\boldsymbol{A},i}}},\widehat{\sigma}_i\in\mathcal{U}_n\right)+P_0(\widehat{\sigma}_i\notin\mathcal{U}_n)\right].
\end{align}
The second term is $O(e^{-Qj})$ for some constant $Q>0$ by virtue of Proposition \ref{prop:sigma}. Now by the triangle inequality,
\begin{align*}
\|\widetilde{f}_{j-1}-\widetilde{f}_i\|_\infty&\leq\|\widetilde{f}_{j-1}-\mathrm{E}_0\widetilde{f}_{j-1}\|_\infty
+\|\widetilde{f}_i-\mathrm{E}_0\widetilde{f}_i\|_\infty+\|\mathrm{E}_0\widetilde{f}_{j-1}-\mathrm{E}_0\widetilde{f}_i\|_\infty.
\end{align*}
Observe that $l_{1,i}$ is increasing in $i$ by definition, $\lambda_{\boldsymbol{A},i}$ and $h(i)$ are decreasing in $i$ by assumption. Then by another application of the triangle inequality and using Lemma \ref{lem:pbias} with $i>j-1\geq j_n^{*}$, the last term on the right hand side is bounded above by
\begin{align*}
\|\mathrm{E}_0\widetilde{f}_{j-1}-f_0\|_\infty+\|\mathrm{E}_0\widetilde{f}_i-f_0\|_\infty&\lesssim\frac{l_{1,j-1}}{\lambda_{\boldsymbol{A},j-1}}
+\frac{l_{1,i}}{\lambda_{\boldsymbol{A},i}}+\|f_0\|_{\mathcal{G}}[h(j-1)+h(i)]\\
&\lesssim\|f_0\|_{\mathcal{G}}h(j_n^{*})+\frac{l_{1,i}}{\lambda_{\boldsymbol{A},i}}\leq\xi l_{1,i}\sqrt{\frac{\log{i}}{\lambda_{\boldsymbol{A},i}}}
\end{align*}
when $n$ is large enough. The last inequality follows from the definition of $j_n^{*}$ given in \eqref{eq:j0} and $\xi>0$ is some universal constant. Suppose $n$ is large enough so that $\widehat{\sigma}_i>\sigma_0/2$ over $\widehat{\sigma}_i\in\mathcal{U}_n$. If we choose $\tau$ large enough so that $\tau>(2/\sigma_0)\xi$, then the first term on the right hand side of \eqref{eq:probj} is bounded above by
\begin{align}\label{eq:2prob}
&\sum_{i\in\mathcal{J}:i\geq j}P_0\left(\|\widetilde{f}_{j-1}-\mathrm{E}_0\widetilde{f}_{j-1}\|_\infty>\frac{\tau(\sigma_0/2)-\xi}{2}l_{1,i}\sqrt{\frac{\log{i}}{\lambda_{\boldsymbol{A},i}}}\right)\nonumber\\
&\qquad+\sum_{i\in\mathcal{J}:i\geq j}P_0\left(\|\widetilde{f}_i-\mathrm{E}_0\widetilde{f}_i\|_\infty>\frac{\tau(\sigma_0/2)-\xi}{2}l_{1,i}\sqrt{\frac{\log{i}}{\lambda_{\boldsymbol{A},i}}}\right).
\end{align}
Apply Lemma \ref{lem:concentrate} by taking $x=\mu\log{(i)}$ for both the cases $J=j-1$ and $J=i$ above, such that $i>j-1$ and for any $2<\mu\leq[\tau(\sigma_0/2)-\xi-2G_1]^2/(8G_2)$ when $\tau$ is large enough and $G_1,G_2>0$ some constants, it follows that \eqref{eq:2prob} is bounded above by $2\sum_{i\in\mathcal{J}:i\geq j}e^{-\mu\log{i}}$. Suppose $n$ is large enough so that $j_n^{*}\geq2$, then using the fact that $\sum_{k=i}^{I}k^{-\beta}\leq\int_{i-1}^{I}x^{-\beta}dx$ for $\beta>0$, we deduce that
\begin{align*}
P_0(\widehat{j}_n>j_n^{*})=\sum_{j>j_n^{*}}^{j_{\mathrm{max}}}P_0(\widehat{j}_n=j)\lesssim\sum_{j>j_n^{*}}^{j_{\mathrm{max}}}
\sum_{i=j}^{j_{\mathrm{max}}}e^{-\mu\log{i}}+\sum_{j>j_n^{*}}^{j_{\mathrm{max}}}e^{-Qj}\lesssim\frac{1}{(j_n^{*})^{\mu-2}}.\qquad\qedhere
\end{align*}
\end{proof}
\noindent In what follows, let $\boldsymbol{M}=(\boldsymbol{A}^T\boldsymbol{A}+\boldsymbol{\Omega}^{-1})^{-1}$ and $\boldsymbol{M}_0=(\boldsymbol{A}^T\boldsymbol{A}+\boldsymbol{\Omega}^{-1})^{-1}\boldsymbol{A}^T\boldsymbol{A}(\boldsymbol{A}^T\boldsymbol{A}+\boldsymbol{\Omega}^{-1})^{-1}$.

\begin{proof}[Proof of Theorem \ref{th:rate}]
Let $\epsilon_n=l_{1,j_n^{*}}\sqrt{\log{(j_n^{*})}/\lambda_{\boldsymbol{A},j_n^{*}}}$ and $\xi$ some constant to be determined below, then by the law of total probability,
\begin{align}\label{eq:posterior}
\mathrm{E}_0\Pi(\|f-f_0\|_\infty>\xi\epsilon_n|\boldsymbol{Y})\leq\mathrm{E}_0\Pi(\|f-f_0\|_\infty>\xi\epsilon_n|\boldsymbol{Y})\mathbbm{1}_{\{\widehat{\sigma}_{\widehat{j}_n}\in\mathcal{U}_n,\hspace{5pt}\widehat{j}_n\leq j_n^{*}\}}\nonumber\\
+P_0\left(\widehat{\sigma}_{\widehat{j}_n}\notin\mathcal{U}_n,\widehat{j}_n\leq j_n^{*}\right)+P_0(\widehat{j}_n>j_n^{*}).
\end{align}
To bound the first term, we use the triangle inequality to write
\begin{align}\label{eq:inequality}
\|f-f_0\|_\infty&\leq\|f-\widetilde{f}_{\widehat{j}_n}\|_\infty+\|\widetilde{f}_{\widehat{j}_n}-\widetilde{f}_{j_n^{*}}\|_\infty
+\|\widetilde{f}_{j_n^{*}}-\mathrm{E}_0\widetilde{f}_{j_n^{*}}\|_\infty+\|\mathrm{E}_0\widetilde{f}_{j_n^{*}}-f_0\|_\infty.
\end{align}
By the definition of $\widehat{j}_n$ in \eqref{eq:optimalj} and intersecting with the event $\{\widehat{\sigma}_{\widehat{j}_n}\in\mathcal{U}_n,\hspace{5pt}\widehat{j}_n\leq j_n^{*}\}$, the second term is bounded above by $\tau(\sigma_0+o(1))\epsilon_n\leq C_1\tau\epsilon_n$ for some constant $C_1>0$ in $P_0$-probability. Applying Lemma \ref{lem:concentrate} for $k=j_n^{*}$ and $x=\log{j_n^{*}}$, the third term is with at least $1-(j_n^{*})^{-1}$ $P_0$-probability, bounded from above by $(G_1+\sqrt{2G_2})\epsilon_n$ for some constants $G_1,G_2>0$. By Lemma \ref{lem:pbias} and the definition of $j_n^{*}$ in \eqref{eq:j0}, it follows that the last term is of the order $l_{1,j_n^{*}}/\lambda_{\boldsymbol{A},j_n^{*}}+\|f_0\|_{\mathcal{G}}h(j_n^{*})\leq C_2\epsilon_n$ for some constant $C_2>0$ and when $n$ is large enough.

Note that $U:=f-\widetilde{f}_{\widehat{j}_n}$ is a centered Gaussian process under the conditional posterior with covariance kernel $\widehat{\sigma}_{\widehat{j}_n}^2\boldsymbol{a}_{\widehat{j}_n}(x)^T\boldsymbol{M}\boldsymbol{a}_{\widehat{j}_n}(y)$ for $x,y\in[0,1]$. In view of \eqref{eq:inequality} and the bounds established in the previous paragraph, the first term in \eqref{eq:posterior} is bounded above by $\mathrm{E}_0\Pi\left[\|U\|_\infty>(\xi-C_1\tau-G_1-\sqrt{2G_2}-C_2)\epsilon_n|\boldsymbol{Y}\right]\mathbbm{1}_{\{\widehat{\sigma}_{\widehat{j}_n}\in\mathcal{U}_n,\hspace{5pt}\widehat{j}_n\leq j_n^{*}\}}$. Now by Lemma \ref{lem:varinf} and intersecting the event $\{\widehat{\sigma}_{\widehat{j}_n}\in\mathcal{U}_n,\hspace{5pt}\widehat{j}_n\leq j_n^{*}\}$, we have that $\mathrm{E}(\|U\|_\infty|\boldsymbol{Y})\lesssim(\sigma_0+o(1))l_{1,\widehat{j}_n}[\log{(\widehat{j}_n)}/\lambda_{\boldsymbol{A},\widehat{j}_n}]^{1/2}\leq C_3\epsilon_n$ in $P_0$-probability for some constant $C_3>0$, since $l_{1,J}$ is increasing in $J$ by definition and $\lambda_{\boldsymbol{A},J}$ is decreasing in $J$ by assumption.

Define $\nu^2=\sup_{x\in[0,1]}\mathrm{Var}[U(x)|\boldsymbol{Y}]$. Take $\xi>C_1\tau+G_1+\sqrt{2G_2}+C_2+C_3=:\xi_0$. Then by the Borell's inequality (see Proposition A.2.1 of \citep{empirical}), the first term in \eqref{eq:posterior} is further bounded above by
\begin{align*}
&\mathrm{E}_0\Pi\left[\|U\|_\infty-\mathrm{E}(\|U\|_\infty|\boldsymbol{Y})>(\xi-\xi_0)\epsilon_n\middle|\boldsymbol{Y}\right]\mathbbm{1}_{\{\widehat{\sigma}_{\widehat{j}_n}\in\mathcal{U}_n,\hspace{5pt}\widehat{j}_n\leq j_n^{*}\}}\\
&\qquad\leq2\int_{\{\widehat{\sigma}_{\widehat{j}_n}\in\mathcal{U}_n,\hspace{5pt}\widehat{j}_n\leq j_n^{*}\}}\exp\left\{-(\xi-\xi_0)^2\epsilon_n^2/(2\nu^2)\right\}dP_0.
\end{align*}
Using the inequality $\boldsymbol{y}^T\boldsymbol{Ty}\leq\lambda_{\mathrm{max}}(\boldsymbol{T})\|\boldsymbol{y}\|^2$ for any square matrix $\boldsymbol{T}$ and \eqref{eq:AAO}, we know that under the event $\{\widehat{\sigma}_{\widehat{j}_n}\in\mathcal{U}_n,\hspace{5pt}\widehat{j}_n\leq j_n^{*}\}$,
\begin{align}\label{eq:var}
\nu^2&\leq(\sigma_0^2+o(1))\lambda_{\mathrm{max}}(\boldsymbol{M})l_{2,\widehat{j}_n}^2
\lesssim l_{2,j_n^{*}}^2/\lambda_{\boldsymbol{A},j_n^{*}},
\end{align}
since $l_{2,J}$ is increasing in $J$ by definition while $\lambda_{\boldsymbol{A},J}$ is decreasing in $J$ by assumption. Therefore, $\epsilon_n^2/\nu^2\gtrsim\log{j_n^{*}}$. As a result, the first term on the right hand side of \eqref{eq:posterior} approaches $0$ at the rate some power of $1/j_n^{*}$ when $\xi>\xi_0$ as $n\rightarrow\infty$.

For the second term in \eqref{eq:posterior}, we use a union bound and Proposition \ref{prop:sigma} to write
\begin{align*}
P_0(\widehat{\sigma}_{\widehat{j}_n}\notin\mathcal{U}_n,\hspace{5pt}\widehat{j}_n\leq j_n^{*})\leq\sum_{j\leq j_n^{*}}P_0(\widehat{\sigma}_j\notin\mathcal{U}_n,\hspace{5pt}\widehat{j}_n=j)\lesssim\sum_{j=j_{\mathrm{min}}}^{j_n^{*}}e^{-Qj}\lesssim e^{-Qj_{\mathrm{min}}}
\end{align*}
for some constant $Q>0$, and this tends to $0$ as $n\rightarrow\infty$ by the definition of $j_{\mathrm{min}}$. It now remains to show that the last term in \eqref{eq:posterior} goes to $0$ at some power of $1/j_n^{*}$, and this is ensured by Proposition \ref{lem:upper}.
\end{proof}

\begin{proof}[Proof of Theorem \ref{th:credible}]
We work instead with the complement
\begin{align}\label{eq:main0}
P_0(f_0\notin\mathcal{C}_{\widehat{j}_n})=P_0\left(\sup_{0\leq x\leq 1}\frac{|f_0(x)-\widetilde{f}_{\widehat{j}_n}(x)|}{\sqrt{\widetilde{V}_{\widehat{j}_n}(x)}}>w_{\gamma,\widehat{j}_n}\right),
\end{align}
and show that it is bounded above by $\gamma+o(1)$ for any $f_0\in\mathcal{G}\bigcap\mathcal{F}$. Now by the triangle inequality, we have $|f_0(x)-\widetilde{f}_{\widehat{j}_n}(x)|\leq|f_0(x)-\mathrm{E}_0\widetilde{f}_{\widehat{j}_n}(x)|+|\mathrm{E}_0\widetilde{f}_{\widehat{j}_n}(x)-\widetilde{f}_{\widehat{j}_n}(x)|$.
Then using the facts that $\sup(f+g)\leq\sup f+\sup g$ and $\sup fg=\sup f\sup g$, it follows that the right hand side of \eqref{eq:main0} is bounded above by
\begin{align*}
P_0\left[\sup_x\sqrt{\frac{\mathrm{Var}_0\widetilde{f}_{\widehat{j}_n}(x)}{\widetilde{V}_{\widehat{j}_n}(x)}}
\sup_x\frac{|\widetilde{f}_{\widehat{j}_n}(x)-\mathrm{E}_0\widetilde{f}_{\widehat{j}_n}(x)|}{\sqrt{\mathrm{Var}_0\widetilde{f}_{\widehat{j}_n}(x)}}>
w_{\gamma,\widehat{j}_n}-\sup_x\frac{|f_0(x)-\mathrm{E}_0\widetilde{f}_{\widehat{j}_n}(x)|}{\sqrt{\widetilde{V}_{\widehat{j}_n}(x)}}\right]
\end{align*}
Now notice that $\boldsymbol{M}_0$ can be reexpressed as $\boldsymbol{M}-\boldsymbol{M\Omega}^{-1}\boldsymbol{M}$ and consequently, $\boldsymbol{y}^T\boldsymbol{M}_0\boldsymbol{y}\leq\boldsymbol{y}^T\boldsymbol{My}$ for any vector $\boldsymbol{y}$, i.e., $\boldsymbol{M}-\boldsymbol{M}_0$ is nonnegative definite. Take $\boldsymbol{y}=\boldsymbol{a}_{\widehat{j}_n}(x)$ and the definition of the variances yield
\begin{align*}
\sup_{0\leq x\leq 1}\sqrt{\frac{\mathrm{Var}_0\widetilde{f}_{\widehat{j}_n}(x)}{\widetilde{V}_{\widehat{j}_n}(x)}}\leq\frac{\sigma_0}{\widehat{\sigma}_{\widehat{j}_n}}.
\end{align*}
As a result, the right hand side of \eqref{eq:main0} is bounded from above by
\begin{align}\label{eq:prob}
P_0\left(\sup_x\frac{|\widetilde{f}_{\widehat{j}_n}(x)-\mathrm{E}_0\widetilde{f}_{\widehat{j}_n}(x)|}{\sqrt{\mathrm{Var}_0\widetilde{f}_{\widehat{j}_n}(x)}}>
w_{\gamma,\widehat{j}_n}\frac{\widehat{\sigma}_{\widehat{j}_n}}{\sigma_0}\left\{1-w_{\gamma,\widehat{j}_n}^{-1}\sup_x\frac{|f_0(x)-
\mathrm{E}_0\widetilde{f}_{\widehat{j}_n}(x)|}{\sqrt{\widetilde{V}_{\widehat{j}_n}(x)}}\right\}\right).
\end{align}
By applying the inequality $\boldsymbol{y}^T\boldsymbol{Ty}\geq\lambda_{\mathrm{min}}(\boldsymbol{T})\|\boldsymbol{y}\|^2$ for any square matrix $\boldsymbol{T}$, we obtain
\begin{align}\label{eq:varlower}
\widetilde{V}_{\widehat{j}_n}(x)&\geq\widehat{\sigma}_{\widehat{j}_n}^2\lambda_{\mathrm{min}}(\boldsymbol{M})\|\boldsymbol{a}_{\widehat{j}_n}(x)\|^2
\gtrsim\widehat{\sigma}_{\widehat{j}_n}^2\inf_{x\in[0,1]}\|\boldsymbol{a}_{\widehat{j}_n}(x)\|^2/\lambda_{\boldsymbol{A},\widehat{j}_n}
\end{align}
where we have used the lower bound in \eqref{eq:AAO}. In view of Lemma \ref{lem:pbias}, the bias $|f_0(x)-\mathrm{E}_0\widetilde{f}_{\widehat{j}_n}(x)|\lesssim l_{1,\widehat{j}_n}/\lambda_{\boldsymbol{A},\widehat{j}_n}+Rh(\widehat{j}_n)$ for any $f_0\in\mathcal{G}$ and uniformly over $x\in[0,1]$. From the definition of $w_{\gamma,\widehat{j}_n}$ given in \eqref{eq:gamma} and the fact that $0\leq\Phi(x)\leq1$ for any $x$,
\begin{align}\label{eq:betalow}
\gamma=|\beta_{\widehat{j}_n}|\pi^{-1}e^{-w_{\gamma,\widehat{j}_n}^2/2}+2[1-\Phi(w_{\gamma,\widehat{j}_n})]\geq |\beta_{\widehat{j}_n}|\pi^{-1}e^{-w_{\gamma,\widehat{j}_n}^2/2},
\end{align}
and rearranging gives $w_{\gamma,\widehat{j}_n}\geq\sqrt{2\log{(\gamma^{-1}\pi^{-1}|\beta_{\widehat{j}_n}|)}}$. Therefore by the second assumption $|\beta_{\widehat{j}_n}|\gtrsim\widehat{j}_n$, we will have $w_{\gamma,\widehat{j}_n}\gtrsim[\log{(\widehat{j}_n)}]^{1/2}$. Collecting all these results,
\begin{align*}
1-w_{\gamma,\widehat{j}_n}^{-1}\sup_x\frac{|f_0(x)-\mathrm{E}_0\widetilde{f}_{\widehat{j}_n}(x)|}{\sqrt{\widetilde{V}_{\widehat{j}_n}(x)}}>1
-C\frac{[\log{(\widehat{j}_n)}]^{-1/2}}{\widehat{\sigma}_{\widehat{j}_n}\inf_{x\in[0,1]}\|\boldsymbol{a}_{\widehat{j}_n}(x)\|}\left(\frac{l_{1,\widehat{j}_n}}{\sqrt{\lambda_{\boldsymbol{A},\widehat{j}_n}}}
+\sqrt{\lambda_{\boldsymbol{A},\widehat{j}_n}}h(\widehat{j}_n)\right),
\end{align*}
where $C>0$ is some constant. Let us define $\mathcal{E}$ to be the event in \eqref{eq:prob} but with the right hand side replaced by the lower bound above. We know that under the event $\mathcal{I}:=\{j_n^{*}/\kappa\leq\widehat{j}_n\leq j_n^{*}\}$ with the constant $\kappa\geq1$ arising from the first assumption, the right hand side above is $1+o_{P_0}(1)$ by the third assumption. Then by the law of total probability, we can further bound the right hand side of \eqref{eq:prob} from above by
\begin{align}\label{eq:total}
P_0\left(\mathcal{E}\cap\mathcal{I}\cap\{\widehat{\sigma}_{\widehat{j}_n}\in\mathcal{U}_n\}\right)+P_0(\widehat{\sigma}_{\widehat{j}_n}\notin\mathcal{U}_n,\hspace{5pt}\mathcal{I})+P_0(\mathcal{I}^c).
\end{align}
Keeping in mind the aforementioned established bounds and using \eqref{eq:tube} with $\boldsymbol{\mu}=\mathrm{E}_0\mathrm{E}_j(\boldsymbol{\theta}|\boldsymbol{Y})$ and $\boldsymbol{\Sigma}=\mathrm{Var}_0\mathrm{E}_j(\boldsymbol{\theta}|\boldsymbol{Y})$, we can bound the first term by
\begin{align*}
&\sum_{j=j_n^{*}/\kappa}^{j_n^{*}}P_0\left[\sup_{0\leq x\leq 1}\frac{|f_0(x)-\mathrm{E}_0\widetilde{f}_{\widehat{j}_n}(x)|}{\sqrt{\mathrm{Var}_0\widetilde{f}_{\widehat{j}_n}(x)}}>w_{\gamma,\widehat{j}_n}\middle|\widehat{j}_n=j\right]P_0\left(\widehat{j}_n=j\right)\\
&\quad\leq\max_{j_n^{*}/\kappa\leq j\leq j_n^{*}}\left(\frac{|\beta_{0,j}|}{\pi}e^{-w_{\gamma,j}^2}+2[1-\Phi(w_{\gamma,j})]\right)\sum_{j=j_n^{*}/\kappa}^{j_n^{*}}P_0(\widehat{j}_n=j)\leq\gamma
\end{align*}
as $n\rightarrow\infty$, since $|\beta_{0,j}|\leq|\beta_j|$ for $j\in[j_n^{*}/\kappa,j_n^{*}]\subset\mathcal{J}$ by the second assumption, the definition of $w_{\gamma,j}$ in \eqref{eq:gamma}, and the first assumption for the last sum.

To complete the proof, we bound the second term in \eqref{eq:total} by
\begin{align*}
\sum_{j=j_n^{*}/\kappa}^{j_n^{*}}P_0\left(\widehat{\sigma}_j\notin\mathcal{U}_n\right)\lesssim\sum_{j=j_n^{*}/\kappa}^{j_n^{*}}e^{-Qj}\lesssim e^{-Qj_n^{*}/\kappa},
\end{align*}
in view of Proposition \ref{prop:sigma} for some constant $Q>0$. The last term of \eqref{eq:total} is $o(1)$ follows from the first assumption.
\end{proof}

\begin{proof}[Proof of Corollary \ref{cor:radius}]
Let us define the event $\mathcal{I}:=\{j_n^{*}/\kappa\leq\widehat{j}_n\leq j_n^{*}\}$. For the upper bound, we have by the tail probability estimate of a standard normal $1-\Phi(x)\leq e^{-x^2/2}$ that
\begin{align*}
\gamma=|\beta_{\widehat{j}_n}|\pi^{-1}e^{-w_{\gamma,\widehat{j}_n}^2/2}+2[1-\Phi(w_{\gamma,\widehat{j}_n})]\leq e^{-w_{\gamma,\widehat{j}_n}^2/2}(|\beta_{\widehat{j}_n}|\pi^{-1}+2).
\end{align*}
By rearranging we see that $w_{\gamma,\widehat{j}_n}\leq\sqrt{2\log{(|\beta_{\widehat{j}_n}|\gamma^{-1}\pi^{-1}+2\gamma^{-1})}}$. Since $|\beta_{\widehat{j}_n}|\lesssim\widehat{j}_n$ by assumption, it follows that $w_{\gamma,\widehat{j}_n}\lesssim(\log{\widehat{j}_n})^{1/2}$. Using the same bound as in \eqref{eq:var}, it holds that $\widetilde{V}_{\widehat{j}_n}(x)\lesssim\widehat{\sigma}_{\widehat{j}_n}^2\|\boldsymbol{a}_{\widehat{j}_n}(x)\|^2/\lambda_{\boldsymbol{A},\widehat{j}_n}$. Therefore under the event $\mathcal{I}\bigcap\{\widehat{\sigma}_{\widehat{j}_n}\in\mathcal{U}_n\}$, we will have $r(x)\lesssim\|\boldsymbol{a}_{j_n^{*}}(x)\|\sqrt{\log{(j_n^{*})}/\lambda_{\boldsymbol{A},j_n^{*}}}$ for any $x\in[0,1]$, since $\|\boldsymbol{a}_J(x)\|$ is increasing in $J$ by definition and $\lambda_{\boldsymbol{A},J}$ is decreasing in $J$ by assumption.

Now $w_{\gamma,\widehat{j}_n}\gtrsim(\log{\widehat{j}_n})^{1/2}$ was established in the proof of Theorem \ref{th:credible} because of \eqref{eq:betalow}. Using the same argument in \eqref{eq:varlower}, we will have $\widetilde{V}_{\widehat{j}_n}(x)\gtrsim\widehat{\sigma}_{\widehat{j}_n}^2\|\boldsymbol{a}_{\widehat{j}_n}(x)\|^2/\lambda_{\boldsymbol{A},\widehat{j}_n}$. Therefore under the event $\mathcal{I}\bigcap\{\widehat{\sigma}_{\widehat{j}_n}\in\mathcal{U}_n\}$, we deduce $r(x)\gtrsim\|\boldsymbol{a}_{j_n^{*}/\kappa}(x)\|\sqrt{\log{(j_n^{*})}/\lambda_{\boldsymbol{A},j_n^{*}/\kappa}}$ for $n$ large enough. The rest of the proof can be completed by the law of total probability as in \eqref{eq:total} in the proof of Theorem \ref{th:credible}, with $\mathcal{E}$ the complement of the event to be proven, i.e., $r(x)$ does not lie in between the upper and lower bounds stated in the corollary for some $x\in[0,1]$.
\end{proof}

It is now convenient to note down the specialization of \eqref{eq:AAO} to the B-splines and wavelet cases for any $J\in\mathcal{J}$:
\begin{align}
J/n&\lesssim\lambda_{\mathrm{min}}\left\{(\boldsymbol{B}^T\boldsymbol{B}+\boldsymbol{\Omega}^{-1})^{-1}\right\}\leq\lambda_{\mathrm{max}}\left\{(\boldsymbol{B}^T\boldsymbol{B}+\boldsymbol{\Omega}^{-1})^{-1}\right\}\lesssim J/n,\label{eq:BBO}\\
n^{-1}&\lesssim\lambda_{\mathrm{min}}\left\{\left(\boldsymbol{\Psi}^T\boldsymbol{\Psi}+\boldsymbol{\Omega}^{-1}\right)^{-1}\right\}\leq\lambda_{\mathrm{max}}\left\{\left(\boldsymbol{\Psi}^T\boldsymbol{\Psi}+\boldsymbol{\Omega}^{-1}\right)^{-1}\right\}\lesssim n^{-1}.\label{eq:WWO}
\end{align}

\begin{proof}[Proof of Lemmas \ref{lem:cgamma} and \ref{lem:cgammaw}]
For the reader's convenience, let us reproduce the arc length formula for general basis functions \eqref{eq:beta}:
\begin{align*}
|\beta_J|=\int_0^1\frac{[\boldsymbol{a}_J(x)^T\boldsymbol{M}\boldsymbol{R}^T\boldsymbol{M}\boldsymbol{RM}\boldsymbol{a}_J(x)]^{1/2}}{[\boldsymbol{a}_J(x)^T\boldsymbol{M}\boldsymbol{a}_J(x)]^{3/2}}dx,
\end{align*}
where $\boldsymbol{R}:=\boldsymbol{\dot{a}}_J(x)\boldsymbol{a}_J(x)^T-\boldsymbol{a}_J(x)\boldsymbol{\dot{a}}_J(x)^T$. We start with the lower bound of $|\beta_J|$, which requires a lower bound for the numerator and an upper bound for the denominator. By \eqref{eq:AAO}, we see that $\boldsymbol{a}_J(x)^T\boldsymbol{Ma}_J(x)\leq\lambda_{\mathrm{max}}(\boldsymbol{M})l_{2,J}^2$ for any $x$. Therefore,
\begin{align*}
|\beta_J|&\geq\lambda^{-3/2}_{\mathrm{max}}(\boldsymbol{M})l_{2,J}^{-3}\int_0^1[\boldsymbol{a}_J(x)^T\boldsymbol{M}\boldsymbol{R}^T\boldsymbol{M}\boldsymbol{RM}\boldsymbol{a}_J(x)]^{1/2}dx.
\end{align*}
For B-splines, we know that $\lambda_{\mathrm{max}}(\boldsymbol{M})\lesssim J/n$ by \eqref{eq:BBO} and $l_{2,J}\leq1$ by Lemma \ref{lem:b2}. Then in view of \eqref{eq:quad1} of Lemma \ref{cor:quad} with $\boldsymbol{\Sigma}=\boldsymbol{M}$ in the integrand, we will obtain
\begin{align*}
|\beta_J|\gtrsim(n/J)^{3/2}\lambda_{\mathrm{min}}(\boldsymbol{M})^{1/2}\lambda_{\mathrm{min}}(\boldsymbol{M})J\gtrsim J,
\end{align*}
where we have appealed again to \eqref{eq:BBO} to lower bound the minimum eigenvalue. The wavelet case is similar, but now $\lambda_{\mathrm{max}}(\boldsymbol{M})\lesssim n^{-1}$ by \eqref{eq:WWO} and $l_{2,J}\lesssim2^{J/2}$ by Lemma \ref{lem:w2}. By invoking \eqref{eq:quad2} of Lemma \ref{cor:quad} with $\boldsymbol{\Sigma}=\boldsymbol{M}$, we see that
\begin{align*}
|\beta_J|\gtrsim n^{3/2}2^{-3J/2}\lambda_{\mathrm{min}}(\boldsymbol{M})^{1/2}\lambda_{\mathrm{min}}(\boldsymbol{M})2^{5J/2}\gtrsim2^J.
\end{align*}

For the upper bound of $|\beta_J|$, we need a lower bound for the denominator and \eqref{eq:AAO} gives $\boldsymbol{a}_J(x)^T\boldsymbol{Ma}_J(x)\geq\lambda_{\mathrm{min}}(\boldsymbol{M})\inf_{x\in[0,1]}\|\boldsymbol{a}_J(x)\|^2$. Thus,
\begin{align*}
|\beta_J|&\leq\lambda_{\mathrm{min}}^{-3/2}(\boldsymbol{M})\left(\inf_{x\in[0,1]}\|\boldsymbol{a}_J(x)\|\right)^{-3}\int_0^1[\boldsymbol{a}_J(x)^T\boldsymbol{M}\boldsymbol{R}^T\boldsymbol{M}\boldsymbol{RM}\boldsymbol{a}_J(x)]^{1/2}dx.
\end{align*}
For B-splines, we have $\lambda_{\mathrm{min}}(\boldsymbol{M})\gtrsim J/n$ by \eqref{eq:BBO} and $\inf_{x\in[0,1]}\|\boldsymbol{a}_J(x)\|\geq q^{-1}$ by Lemma \ref{lem:b2}. Subsequently by \eqref{eq:quad1} of Lemma \ref{cor:quad},
\begin{align*}
|\beta_J|\lesssim(n/J)^{3/2}\lambda_{\mathrm{max}}(\boldsymbol{M})^{1/2}\lambda_{\mathrm{max}}(\boldsymbol{M})J\lesssim J,
\end{align*}
again by invoking \eqref{eq:BBO}. For wavelets however, we have $\lambda_{\mathrm{min}}(\boldsymbol{M})\gtrsim n^{-1}$ by \eqref{eq:WWO} and $\inf_{x\in[0,1]}\|\boldsymbol{a}_J(x)\|\gtrsim 2^{J/2}$ by Lemma \ref{lem:w2}. Consequently by \eqref{eq:quad2} of Lemma \ref{cor:quad},
\begin{align*}
|\beta_J|\lesssim n^{3/2}2^{-3J/2}\lambda_{\mathrm{max}}(\boldsymbol{M})^{1/2}\lambda_{\mathrm{max}}(\boldsymbol{M})2^{5J/2}\lesssim2^J,
\end{align*}
by appealing once more to \eqref{eq:WWO}.
\end{proof}

\begin{proof}[Proof of Lemmas \ref{lem:curve} and \ref{lem:curvew}]
The expression $|\beta_{0,J}|$ has the same functional form as $|\beta_J|$ in \eqref{eq:beta} except that $\boldsymbol{M}$ is replaced by $\boldsymbol{M}_0$, giving (for ease of reference)
\begin{align*}
|\beta_{0,J}|=\int_0^1\frac{[\boldsymbol{a}_J(x)^T\boldsymbol{M}_0\boldsymbol{R}^T\boldsymbol{M}_0\boldsymbol{RM}_0\boldsymbol{a}_J(x)]^{1/2}}{[\boldsymbol{a}_J(x)^T\boldsymbol{M}_0\boldsymbol{a}_J(x)]^{3/2}}dx,
\end{align*}
where $\boldsymbol{R}:=\boldsymbol{\dot{a}}_J(x)\boldsymbol{a}_J(x)^T-\boldsymbol{a}_J(x)\boldsymbol{\dot{a}}_J(x)^T$. We then need to upper bound the numerator and lower bound the denominator. Before we begin, note that $\boldsymbol{M}_0=\boldsymbol{M}-\boldsymbol{M\Omega}^{-1}\boldsymbol{M}$ and since $\boldsymbol{\Omega}$ is positive definite, we have $\boldsymbol{M}_0\leq\boldsymbol{M}$.

By our assumption on the prior matrix, we have $\lambda_{\mathrm{max}}(\boldsymbol{\Omega}^{-1})\leq c_1^{-1}$. Then
\begin{align*}
\boldsymbol{a}_J(x)^T\boldsymbol{M\Omega}^{-1}\boldsymbol{Ma}_J(x)\leq\lambda_{\mathrm{max}}(\boldsymbol{\Omega}^{-1})\lambda_{\mathrm{max}}(\boldsymbol{M})^2\|\boldsymbol{a}_J(x)\|^2\lesssim\lambda_{\mathrm{max}}(\boldsymbol{M})^2l_{2,J}^2;
\end{align*}
while $\boldsymbol{a}_J(x)^T\boldsymbol{Ma}_J(x)\geq\lambda_{\mathrm{min}}(\boldsymbol{M})\inf_{x\in[0,1]}\|\boldsymbol{a}_J(x)\|^2$. Putting these two together and using the relation between $\boldsymbol{M}_0$ and $\boldsymbol{M}$, we conclude that
\begin{align*}
\boldsymbol{a}_J(x)^T\boldsymbol{M}_0\boldsymbol{a}_J(x)&=\boldsymbol{a}_J(x)^T\boldsymbol{Ma}_J(x)\left(1-\frac{\boldsymbol{a}_J(x)^T\boldsymbol{M\Omega}^{-1}\boldsymbol{Ma}_J(x)}{\boldsymbol{a}_J(x)^T\boldsymbol{Ma}_J(x)}\right)\\
&\geq\boldsymbol{a}_J(x)^T\boldsymbol{Ma}_J(x)\left[1-\xi_3\frac{\lambda_{\mathrm{max}}(\boldsymbol{M})^2l_{2,J}^2}{\lambda_{\mathrm{min}}(\boldsymbol{M})\inf_{x\in[0,1]}\|\boldsymbol{a}_J(x)\|^2}\right]
\end{align*}
for some constant $\xi_3>0$. For B-splines, use \eqref{eq:BBO} and Lemma \ref{lem:b2} to deduce that the expression inside the square brackets reduces to $1-\xi_3(J/n)$. For the wavelets, use instead \eqref{eq:WWO} and Lemma \ref{lem:w2} to see that it is now $1-\xi_3n^{-1}$.

We now deal with the numerator. Now use $\boldsymbol{M}_0\leq\boldsymbol{M}$ on the middle $\boldsymbol{M}_0$ and substituting $\boldsymbol{M}_0=\boldsymbol{M}-\boldsymbol{M\Omega}^{-1}\boldsymbol{M}$ on the other two $\boldsymbol{M}_0$'s flanking the quadratic form, we know that $\boldsymbol{a}_J(x)^T\boldsymbol{M}_0\boldsymbol{R}^T\boldsymbol{M}_0\boldsymbol{R}\boldsymbol{M}_0\boldsymbol{a}_J(x)$ will be bounded above by $\boldsymbol{a}_J(x)^T\boldsymbol{MR}^T\boldsymbol{MRMa}_J(x)$ times
\begin{align}\label{eq:2ratio}
1-\frac{2\boldsymbol{a}_J(x)^T\boldsymbol{MR}^T\boldsymbol{MRM\Omega}^{-1}\boldsymbol{Ma}_J(x)}
{\boldsymbol{a}_J(x)^T\boldsymbol{MR}^T\boldsymbol{MRMa}_J(x)}+\frac{\boldsymbol{a}_J(x)^T\boldsymbol{M\Omega}^{-1}\boldsymbol{MR}^T\boldsymbol{MRM\Omega}^{-1}\boldsymbol{Ma}_J(x)}
{\boldsymbol{a}_J(x)^T\boldsymbol{MR}^T\boldsymbol{MRMa}_J(x)}.
\end{align}
By Corollary \ref{cor:quad} with $\boldsymbol{\Sigma}=\boldsymbol{M}$, we have for $\mu(J)=J^2$ (B-splines) or $\mu(J)=2^{5J}$ (CDV wavelets) that
\begin{align}\label{eq:quadA}
\mu(J)\lambda_{\mathrm{min}}(\boldsymbol{M})^3\lesssim\boldsymbol{a}_J(x)^T\boldsymbol{MR}^T\boldsymbol{MRMa}_J(x)\lesssim\mu(J)\lambda_{\mathrm{max}}(\boldsymbol{M})^3.
\end{align}
By another application of Lemma \ref{lem:quad} with $\boldsymbol{\Sigma}=\boldsymbol{M\Omega}^{-1}\boldsymbol{M}$, we can further deduce that $\boldsymbol{a}_J(x)^T\boldsymbol{M\Omega}^{-1}\boldsymbol{MR}^T\boldsymbol{MRM\Omega}^{-1}\boldsymbol{Ma}_J(x)\lesssim\mu(J)
\lambda_{\mathrm{max}}(\boldsymbol{M})\lambda_{\mathrm{max}}(\boldsymbol{M\Omega}^{-1}\boldsymbol{M})^2$. Note that since $\boldsymbol{M\Omega}^{-1}\boldsymbol{M}$ is positive definite, we have by the sub-multiplicative property of the $\|\cdot\|_{(2,2)}$-norm that $\lambda_{\mathrm{max}}(\boldsymbol{M\Omega}^{-1}\boldsymbol{M})=\|\boldsymbol{M\Omega}^{-1}\boldsymbol{M}\|_{(2,2)}\leq\|\boldsymbol{M}\|_{(2,2)}^2
\|\boldsymbol{\Omega}^{-1}\|_{(2,2)}=\lambda_{\mathrm{max}}(\boldsymbol{M})^2\lambda_{\mathrm{max}}(\boldsymbol{\Omega}^{-1})$. In conjunction with \eqref{eq:quadA}, the second ratio in \eqref{eq:2ratio} is then bounded up to some constant multiple by $\lambda_{\mathrm{max}}(\boldsymbol{M})^5\lambda_{\mathrm{max}}(\boldsymbol{\Omega}^{-1})^2\lambda^{-3}_{\mathrm{min}}(\boldsymbol{M})$. For B-splines, this is further bounded by $\xi_2(J/n)^2$ for some constant $\xi_2>0$ in view of \eqref{eq:BBO}; while this will be $\xi_2n^{-2}$ for the wavelet case in view of \eqref{eq:WWO}.

It now remains to bound the first ratio in \eqref{eq:2ratio}. The numerator of this ratio consists of a quadratic form that is asymmetric in the sense that the $\boldsymbol{\Omega}^{-1}$ on the right flank is matched by $\boldsymbol{M}$ and not $\boldsymbol{\Omega}^{-1}$, and hence inequality such as Corollary \ref{cor:quad} is not directly applicable. To work around this, we use Lemma \ref{lem:tr} by exploiting the cyclic permutation of the trace operator to rearrange the order of these matrices, and expelling the asymmetric matrices out so that the reduced quadratic form is symmetric. Here goes our argument. The numerator (a scalar quantity) is equal to $\mathrm{tr}[\boldsymbol{a}_J(x)^T\boldsymbol{MR}^T\boldsymbol{MRM\Omega}^{-1}\boldsymbol{Ma}_J(x)]$ and by rearrangement and twice application of Lemma \ref{lem:tr} is
\begin{align*}
\mathrm{tr}\left[\boldsymbol{\Omega}^{-1}\boldsymbol{Ma}_J(x)\boldsymbol{a}_J(x)^T\boldsymbol{MR}^T\boldsymbol{MRM}\right]
&\geq\lambda_{\mathrm{min}}(\boldsymbol{\Omega}^{-1})\mathrm{tr}\left[\boldsymbol{Ma}_J(x)\boldsymbol{a}_J(x)^T\boldsymbol{MR}^T\boldsymbol{MRM}\right]\\
&\geq\lambda_{\mathrm{min}}(\boldsymbol{\Omega}^{-1})\lambda_{\mathrm{min}}(\boldsymbol{M})\boldsymbol{a}_J(x)^T\boldsymbol{MR}^T\boldsymbol{MRM}\boldsymbol{a}_J(x)\\
&\gtrsim\lambda_{\mathrm{min}}(\boldsymbol{\Omega}^{-1})\lambda_{\mathrm{min}}(\boldsymbol{M})^4\mu(J),
\end{align*}
where the last inequality follows from the lower bound of \eqref{eq:quadA} above. Then in view of the upper bound in \eqref{eq:quadA}, we see that the first ratio of \eqref{eq:2ratio} is greater up to some constant than $\lambda_{\mathrm{min}}(\boldsymbol{\Omega}^{-1})\lambda_{\mathrm{min}}(\boldsymbol{M})^4\lambda^{-3}_{\mathrm{max}}(\boldsymbol{M})$. For B-splines, this is greater than $\xi_1(J/n)$ as a result of \eqref{eq:BBO}; and for the wavelets, it will be $\xi_1n^{-1}$ by \eqref{eq:WWO}. We will obtain our result by combining the upper and lower bounds established for the two ratios, and also noting that the integral integrates to $1$ since our bounds are uniform in $x\in[0,1]$.
\end{proof}

\begin{proof}[Proof of Lemma \ref{lem:lower}]
Let $j\in\mathcal{J}=[j_{\mathrm{min}},j_{\mathrm{max}}]\cap\mathbb{N}$ such that $j<j_n^{*}/\kappa$ for some constant $\kappa\in\mathbb{N}$ to be chosen below. By the definition of $\widehat{j}_n$ in \eqref{eq:optimalj}, it follows that
\begin{align}\label{eq:probjsmall}
P_0(\widehat{j}_n=j)\leq P_0\left(\|\widetilde{f}_j-\widetilde{f}_{j_n^{*}}\|_\infty\leq\tau\widehat{\sigma}_{j_n^{*}}\sqrt{\frac{j_n^{*}\log{j_n^{*}}}{n}},\hspace{5pt}\widehat{\sigma}_{j_n^{*}}\in\mathcal{U}_n\right)+P_0\left(\widehat{\sigma}_{j_n^{*}}\notin\mathcal{U}_n\right).
\end{align}
The second term is $O(e^{-Qj_n^{*}})$ for some constant $Q>0$ in view of Proposition \ref{prop:sigma}. For the first term, we apply the reverse triangle inequality twice to obtain
\begin{align*}
\|\widetilde{f}_j-\widetilde{f}_{j_n^{*}}\|_\infty&\geq\|\mathrm{E}_0\widetilde{f}_j-f_0\|_\infty-\|\mathrm{E}_0\widetilde{f}_{j_n^{*}}-f_0\|_\infty
-\|\widetilde{f}_j-\mathrm{E}_0\widetilde{f}_j-\widetilde{f}_{j_n^{*}}+\mathrm{E}_0\widetilde{f}_{j_n^{*}}\|_\infty.
\end{align*}
Observe that $\mathrm{E}_0\widetilde{f}_j(x)=\sum_{k=1}^j\mathrm{E}_0\mathrm{E}(\theta_k|\boldsymbol{Y})B_{k,q}(x)$ is a polynomial spline of order $q$ with quasi-uniform knots in $\mathcal{T}_n:=\{0=t_0<t_1<\cdots<t_N<t_{N+1}=1\}$. Let us denote $\Delta_{\mathcal{T}_n}:=\max_{1\leq i\leq N+1}(t_i-t_{i-1})$ to be the corresponding maximum knot increment. Since $f_0\in\mathcal{F}^{\alpha}$ (in view of \eqref{eq:similar}), the first term above is bounded below by
\begin{align*}
\inf_{p\in\mathcal{P}_q(\mathcal{T}_n)}\|p-f_0\|_{\infty}&\geq\delta_q\|f_0\|_{\mathcal{H}^{\alpha}([0,1])}\Delta_{\mathcal{T}_n}^{\alpha}\gtrsim\delta_q\|f_0\|_{\mathcal{H}^{\alpha}([0,1])}j^{-\alpha}\\
&\gtrsim\delta_q(\kappa/2)^{\alpha}\|f_0\|_{\mathcal{H}^{\alpha}([0,1])}(j_n^{*}/2)^{-\alpha}\geq C_1\delta_q\kappa^{\alpha}\sqrt{j_n^{*}\log{(j_n^{*})}/n},
\end{align*}
for some constant $C_1>0$. Here, the second inequality follows from the fact $\Delta_{\mathcal{T}_n}\asymp N^{-1}\asymp j^{-1}$ since the spacings between knots are of the same order and they sum to one. The third and last inequalities follow from the assumption $j<j_n^{*}/\kappa$ and \eqref{eq:j0}, where $\|f_0\|_{\mathcal{H}^{\alpha}([0,1])}i^{-\alpha}>\sqrt{i\log{(i)}/n}$ if $i<j_n^{*}$. By Lemma \ref{lem:pbias} restricted to B-splines bases and in view of the definition of $j_n^{*}$ given in \eqref{eq:j0}, it holds that $\|\mathrm{E}_0\widetilde{f}_{j_n^{*}}-f_0\|_\infty\lesssim j_n^{*}/n+\|f_0\|_{\mathcal{H}^{\alpha}([0,1])}(j_n^{*})^{-\alpha}\leq C_2\sqrt{j_n^{*}\log{(j_n^{*})}/n}$ for some constant $C_2>0$. Suppose $n$ is large enough so that $\widehat{\sigma}_{j_n^{*}}\leq2\sigma_0$ for $\widehat{\sigma}_{j_n^{*}}\in\mathcal{U}_n$. Then by combining the bounds established, the first term on the right hand side of \eqref{eq:probjsmall} is bounded above by
\begin{align*}
&P_0\left[\|\widetilde{f}_j-\mathrm{E}_0\widetilde{f}_j-\widetilde{f}_{j_n^{*}}+\mathrm{E}_0\widetilde{f}_{j_n^{*}}\|_\infty>(C_1\delta_q\kappa^{\alpha}-C_2-2\sigma_0\tau)\sqrt{\frac{j_n^{*}\log{j_n^{*}}}{n}}\right]\\
&\quad\leq P_0\left[\|\widetilde{f}_j-\mathrm{E}_0\widetilde{f}_j\|_\infty>\frac{\sqrt{\kappa}(C_1\delta_q\kappa^{\alpha}-C_2-2\sigma_0\tau)}{2}\sqrt{\frac{j\log{j}}{n}}\right]\\
&\qquad+P_0\left[\|\widetilde{f}_{j_n^{*}}-\mathrm{E}_0\widetilde{f}_{j_n^{*}}\|_\infty>\frac{\sqrt{\kappa}(C_1\delta_q\kappa^{\alpha}-C_2-2\sigma_0\tau)}{2}\sqrt{\frac{j\log{j}}{n}}\right],
\end{align*}
in view of the triangle inequality and since $j<j_n^{*}/\kappa$. We then take the constant $\kappa\in\mathbb{N}$, which depends only on $\delta_q,\alpha$ and $\tau$, to be large enough so that $C_1\delta_q\kappa^{\alpha}-C_2-2\sigma_0\tau>0$. In addition, we take $n$ large enough so that $j_{\mathrm{min}}\geq2$. Applying Lemma \ref{lem:concentrate} by letting $x=\mu\log{j}$ for both terms $J=j$ and $J=j_n^{*}$ such that $1<\mu\leq[\sqrt{\kappa}(C_1\delta_q\kappa^{\alpha}-C_2-2\sigma_0\tau)-2G_1]^2/(8G_2)$ with $G_1,G_2>0$ some constants, we can then conclude that
\begin{align*}
P_0(\widehat{j}_n<j_n^{*}/\kappa)=\sum_{j=j_{\mathrm{min}}}^{\lfloor j_n^{*}/\kappa\rfloor}P_0(\widehat{j}_n=j)\lesssim\sum_{j=j_{\mathrm{min}}}^{j_{\mathrm{max}}}e^{-\mu\log{j}}+\sum_{j=j_{\mathrm{min}}}^{\lfloor j_n^{*}/\kappa\rfloor}e^{-Qj_n^{*}}\lesssim\frac{1}{j_{\mathrm{min}}^{\mu-1}}+j_n^{*}e^{-Qj_n^{*}},
\end{align*}
where we have used the relation $\sum_{k=i}^\infty k^{-\beta}\leq\int_{i-1}^\infty x^{-\beta}dx$ for $\beta>0$ in the last inequality. Therefore, the right hand side tends to $0$ since $j_{\mathrm{min}}=(n/\log{n})^{1/(2q+1)}\rightarrow\infty$ and $j_n^{*}e^{-Qj_n^{*}}\to0$ as $n\rightarrow\infty$.
\end{proof}

\section{Appendix}\label{sec:appendix}
\begin{lemma}\label{lem:pbias}
For any $x\in[0,1]$, $J\in\mathcal{J}$, and uniformly over $f_0\in\mathcal{G}$,
\begin{align*}
|\mathrm{E}_0\mathrm{E}_J[f(x)|\boldsymbol{Y}]-f_0(x)|\lesssim l_{1,J}/\lambda_{\boldsymbol{A},J}+\|f_0\|_{\mathcal{G}}h(J).
\end{align*}
\end{lemma}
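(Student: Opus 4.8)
The plan is to route through the expectation of the posterior mean \eqref{eq:pmean} and compare it with the near-interpolant $\boldsymbol{a}_J(\cdot)^T\boldsymbol{\theta}_0$ furnished by \eqref{eq:approxG}. Writing $\boldsymbol{M}=(\boldsymbol{A}^T\boldsymbol{A}+\boldsymbol{\Omega}^{-1})^{-1}$ and using $\boldsymbol{Y}=\boldsymbol{F}_0+\boldsymbol{\varepsilon}$ with $\boldsymbol{F}_0=(f_0(X_1),\dotsc,f_0(X_n))^T$ and $\mathrm{E}_0\boldsymbol{\varepsilon}=\boldsymbol{0}$, one has $\mathrm{E}_0\mathrm{E}_J[f(x)|\boldsymbol{Y}]=\boldsymbol{a}_J(x)^T\boldsymbol{M}(\boldsymbol{A}^T\boldsymbol{F}_0+\boldsymbol{\Omega}^{-1}\boldsymbol{\eta})$. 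I would then insert $\boldsymbol{\theta}_0$ through the identity $\boldsymbol{M}(\boldsymbol{A}^T\boldsymbol{F}_0+\boldsymbol{\Omega}^{-1}\boldsymbol{\eta})-\boldsymbol{\theta}_0=\boldsymbol{M}\boldsymbol{A}^T(\boldsymbol{F}_0-\boldsymbol{A}\boldsymbol{\theta}_0)+\boldsymbol{M}\boldsymbol{\Omega}^{-1}(\boldsymbol{\eta}-\boldsymbol{\theta}_0)$, which is immediate from $\boldsymbol{M}^{-1}=\boldsymbol{A}^T\boldsymbol{A}+\boldsymbol{\Omega}^{-1}$. This gives $\mathrm{E}_0\mathrm{E}_J[f(x)|\boldsymbol{Y}]-f_0(x)=T_{1a}(x)+T_{1b}(x)+T_2(x)$, where $T_2(x)=\boldsymbol{a}_J(x)^T\boldsymbol{\theta}_0-f_0(x)$, $T_{1b}(x)=\boldsymbol{a}_J(x)^T\boldsymbol{M}\boldsymbol{\Omega}^{-1}(\boldsymbol{\eta}-\boldsymbol{\theta}_0)$, and $T_{1a}(x)=\boldsymbol{a}_J(x)^T\boldsymbol{M}\boldsymbol{A}^T\boldsymbol{e}$ with $\boldsymbol{e}=\boldsymbol{F}_0-\boldsymbol{A}\boldsymbol{\theta}_0$, whose entries satisfy $|e_i|=|f_0(X_i)-\boldsymbol{a}_J(X_i)^T\boldsymbol{\theta}_0|\le C_0\|f_0\|_{\mathcal{G}}h(J)$ by \eqref{eq:approxG}. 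The term $T_2$ is then at once bounded by $C_0\|f_0\|_{\mathcal{G}}h(J)$, which is the approximation piece of the claim.

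For the shrinkage term $T_{1b}$ I would use the H\"older split $|T_{1b}(x)|\le\|\boldsymbol{a}_J(x)\|_1\,\|\boldsymbol{M}\boldsymbol{\Omega}^{-1}(\boldsymbol{\eta}-\boldsymbol{\theta}_0)\|_\infty\le l_{1,J}\,\|\boldsymbol{M}\|_{(\infty,\infty)}\,\|\boldsymbol{\Omega}^{-1}\|_{(\infty,\infty)}\,\|\boldsymbol{\eta}-\boldsymbol{\theta}_0\|_\infty$. Here $\|\boldsymbol{\eta}-\boldsymbol{\theta}_0\|_\infty\le\|\boldsymbol{\eta}\|_\infty+\|\boldsymbol{\theta}_0\|_\infty$ is a finite constant by the standing hypotheses; $\|\boldsymbol{\Omega}^{-1}\|_{(\infty,\infty)}$ is a constant because $\boldsymbol{\Omega}^{-1}$ is $r$-banded with entries controlled by $\lambda_{\mathrm{max}}(\boldsymbol{\Omega}^{-1})\le c_1^{-1}$ from \eqref{eq:prior}; and the decisive estimate is $\|\boldsymbol{M}\|_{(\infty,\infty)}\lesssim\lambda_{\boldsymbol{A},J}^{-1}$. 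This is exactly where the bandedness is used: $\boldsymbol{M}^{-1}=\boldsymbol{A}^T\boldsymbol{A}+\boldsymbol{\Omega}^{-1}$ is banded and positive definite with all eigenvalues $\asymp\lambda_{\boldsymbol{A},J}$ by \eqref{eq:AAO}, so its inverse has geometrically decaying entries, with decay rate governed only by the bounded condition number, whence each row sum is $\lesssim\lambda_{\mathrm{max}}(\boldsymbol{M})\lesssim\lambda_{\boldsymbol{A},J}^{-1}$. Since $\boldsymbol{M}$ is symmetric, $\|\boldsymbol{M}\|_{(1,1)}=\|\boldsymbol{M}\|_{(\infty,\infty)}$, and we obtain $|T_{1b}(x)|\lesssim l_{1,J}/\lambda_{\boldsymbol{A},J}$ uniformly in $x$, the first term of the claim.

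The delicate piece is $T_{1a}$, and it is where the argument must be sharp: bounding it crudely by $\|\boldsymbol{a}_J(x)\|_1\|\boldsymbol{M}\boldsymbol{A}^T\boldsymbol{e}\|_\infty$, or by Cauchy--Schwarz through $\|\boldsymbol{e}\|\le\sqrt{n}\,\|\boldsymbol{e}\|_\infty$, both inject a spurious factor ($l_{1,J}$ and $\sqrt{J}$ respectively) that would ruin the rate. The remedy is to keep $\boldsymbol{a}_J(x)^T\boldsymbol{M}\boldsymbol{A}^T$ together as a single vector of equivalent-kernel weights, writing $T_{1a}(x)=\sum_{i=1}^n g_J(x,X_i)\,e_i$ with $g_J(x,y):=\boldsymbol{a}_J(x)^T\boldsymbol{M}\boldsymbol{a}_J(y)$, so that $|T_{1a}(x)|\le\big(\sup_{x\in[0,1]}\sum_{i=1}^n|g_J(x,X_i)|\big)\,C_0\|f_0\|_{\mathcal{G}}h(J)$, and the whole matter reduces to the uniform bound $\sup_{x\in[0,1]}\sum_{i=1}^n|g_J(x,X_i)|\lesssim1$. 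I expect this to be the main obstacle. I would prove it by chaining $\sum_i|g_J(x,X_i)|\le\|\boldsymbol{M}\boldsymbol{a}_J(x)\|_1\,\|\boldsymbol{A}\|_{(1,1)}$, where $\|\boldsymbol{M}\boldsymbol{a}_J(x)\|_1\le\|\boldsymbol{M}\|_{(1,1)}\|\boldsymbol{a}_J(x)\|_1\lesssim l_{1,J}/\lambda_{\boldsymbol{A},J}$ by the decay estimate above, while the column-sum $\|\boldsymbol{A}\|_{(1,1)}=\max_k\sum_i|a_k(X_i)|\lesssim\lambda_{\boldsymbol{A},J}/l_{1,J}$ for the compactly supported systems and quasi-uniform designs in play (reducing to $\sum_iB_{k,q}(X_i)\lesssim n/J$ for B-splines). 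The product is $\lesssim1$, giving $|T_{1a}(x)|\lesssim\|f_0\|_{\mathcal{G}}h(J)$. Collecting $T_{1a},T_{1b},T_2$, taking the supremum over $x\in[0,1]$, and noting all constants are uniform over $f_0\in\mathcal{G}$ then yields the stated bound.
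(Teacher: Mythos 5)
Your decomposition into $T_{1a}$, $T_{1b}$, $T_2$ is exactly the paper's (the paper writes the first two jointly as $|\boldsymbol{a}_J(x)^T\boldsymbol{M}[\boldsymbol{A}^T\boldsymbol{F}_0+\boldsymbol{\Omega}^{-1}\boldsymbol{\eta}-\boldsymbol{M}^{-1}\boldsymbol{\theta}_0]|$), and your treatment of $T_{1b}$ and $T_2$ matches its proof step for step; your geometric-decay justification of $\|\boldsymbol{M}\|_{(\infty,\infty)}\lesssim\lambda_{\boldsymbol{A},J}^{-1}$ is precisely the content of the paper's Lemma \ref{lem:BBinfty} with $g(x)=x^{-1}$. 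The genuine gap is in the step you yourself flagged as decisive. Your closing estimate $\|\boldsymbol{A}\|_{(1,1)}=\max_{1\leq k\leq J}\sum_{i=1}^n|a_k(X_i)|\lesssim\lambda_{\boldsymbol{A},J}/l_{1,J}$ is not among the standing assumptions of the general framework (only \eqref{eq:prior}, \eqref{eq:ceb}, \eqref{eq:Al1l2} and bandedness are available; Lemma \ref{lem:pbias} is stated at that level of generality, with no design-density hypothesis), and it is false for one of the paper's two applications: for the CDV wavelet system a coarse-scale column already gives $\sum_{i=1}^n|\varphi_{N,k}(X_i)|\asymp n$, whereas $\lambda_{\boldsymbol{\Psi},J}/l_{1,J}\asymp n\,2^{-J/2}$. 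With the true value $\|\boldsymbol{\Psi}\|_{(1,1)}\asymp n$, your chained product is $\|\boldsymbol{M}\boldsymbol{\psi}_J(x)\|_1\|\boldsymbol{\Psi}\|_{(1,1)}\asymp(2^{J/2}/n)\cdot n=2^{J/2}$, so the kernel sum is only bounded by $2^{J/2}$ and you get $|T_{1a}(x)|\lesssim 2^{J/2}\|f_0\|_{\mathcal{G}}h(J)$ — off by a factor $l_{1,J}$, which breaks the lemma in the multiscale case along $\mathcal{J}$. (The $O(1)$ kernel-sum bound you want is plausibly true for wavelets, but proving it requires exploiting cancellation across scales, e.g.\ a kernel-majorant bound of the type $|\sum_{j,k}\psi_{j,k}(x)\psi_{j,k}(y)|\lesssim 2^J(1+2^J|x-y|)^{-m}$; it cannot be extracted from a norm chain that pairs the total $\ell_1$-mass of $\boldsymbol{M}\boldsymbol{a}_J(x)$ with the single worst column sum.)

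The paper closes this step differently, and more cheaply: it does not need the weight sum to be $O(1)$. It bounds $\|\boldsymbol{a}_J(x)^T\boldsymbol{M}\boldsymbol{A}^T\|_1\leq l_{1,J}\|\boldsymbol{M}\|_{(1,1)}\|\boldsymbol{A}^T\|_{(1,1)}$ with $\|\boldsymbol{A}^T\|_{(1,1)}=\max_{1\leq i\leq n}\|\boldsymbol{a}_J(X_i)\|_1\leq l_{1,J}$ — a row-sum quantity depending on the basis alone, with no design density entering — and then simply tolerates the resulting middle term $l_{1,J}^2\lambda_{\boldsymbol{A},J}^{-1}\|f_0\|_{\mathcal{G}}h(J)$, which is $\leq\|f_0\|_{\mathcal{G}}h(J)$ precisely because of the standing assumption \eqref{eq:Al1l2}, $l_{1,J}^2\leq\lambda_{\boldsymbol{A},J}$. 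This is the device missing from your argument: the factor you regarded as spurious and tried to eliminate is in fact affordable, being exactly what \eqref{eq:Al1l2} was built to absorb. I will add that your instinct that this grouping is the crux of the lemma is well founded: even the paper's own one-line chain deserves scrutiny, since the standard submultiplicative estimate applied to $\|\boldsymbol{a}_J(x)^T\boldsymbol{M}\boldsymbol{A}^T\|_1=\|\boldsymbol{A}\boldsymbol{M}\boldsymbol{a}_J(x)\|_1$ produces the column-sum norm $\|\boldsymbol{A}\|_{(1,1)}$ rather than the row-sum norm $\|\boldsymbol{A}^T\|_{(1,1)}$ that the paper's final bound requires, so whichever repair you adopt, this inequality should be stated and justified explicitly rather than asserted.
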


\begin{proof}
By definition \eqref{eq:approxG}, we know that for any $f_0\in\mathcal{G}$, there exists a $\boldsymbol{\theta}_0$ such that
\begin{align*}
|\mathrm{E}_0\widetilde{f}_J(x)-f_0(x)|\leq|\mathrm{E}_0\widetilde{f}_J(x)-\boldsymbol{a}_J(x)^T\boldsymbol{\theta}_0|+C_0\|f_0\|_{\mathcal{G}}h(J).
\end{align*}
In view of \eqref{eq:pmean}, the first term is
$|\boldsymbol{a}_J(x)^T\boldsymbol{M}[\boldsymbol{A}^T\boldsymbol{F}_0+\boldsymbol{\Omega}^{-1}\boldsymbol{\eta}
-\boldsymbol{M}^{-1}\boldsymbol{\theta}_0]|$. By H\"{o}lder's inequality and the sub-multiplicative property of the $\|\cdot\|_{(1,1)}$ and $\|\cdot\|_{(\infty,\infty)}$-norms, this first term can be bounded above by
\begin{align*}
&\|\boldsymbol{a}_J(x)^T\boldsymbol{M}\boldsymbol{A}^T\|_1\|\boldsymbol{F}_0-\boldsymbol{A\theta}_0\|_\infty
+\|\boldsymbol{a}_J(x)\|_1\|\boldsymbol{M}\boldsymbol{\Omega}^{-1}(\boldsymbol{\eta}-\boldsymbol{\theta}_0)\|_\infty\\
&\quad\leq l_{1,J}\|\boldsymbol{M}\|_{(1,1)}\|\boldsymbol{A}^T\|_{(1,1)}\|f_0-\boldsymbol{a}_J(\cdot)^T\boldsymbol{\theta}_0\|_\infty+l_{1,J}\|\boldsymbol{M}\|_{(\infty,\infty)}\|\boldsymbol{\Omega}^{-1}\|_{(\infty,\infty)}(\|\boldsymbol{\eta}\|_\infty+\|\boldsymbol{\theta}_0\|_\infty).
\end{align*}
By our prior assumption, $\|\boldsymbol{\eta}\|_\infty=O(1)$. By definition of $\mathcal{G}$, $\|\boldsymbol{\theta}_0\|_\infty=O(1)$ and $\|f_0-\boldsymbol{a}_J(\cdot)^T\boldsymbol{\theta}_0\|_\infty\leq C_0\|f_0\|_{\mathcal{G}}h(J)$. Using Lemma \ref{lem:BBinfty} with $g(x)=x^{-1}$, we have $\|\boldsymbol{\Omega}^{-1}\|_{(\infty,\infty)}=O(1)$ and $\|\boldsymbol{M}\|_{(\infty,\infty)}\lesssim\lambda_{\boldsymbol{A},J}^{-1}$ by exploiting the bandedness of both $\boldsymbol{A}^T\boldsymbol{A}$ and $\boldsymbol{\Omega}^{-1}$. Now since $\boldsymbol{M}$ is symmetric, its (absolute value of) row and column sums are equal. Thus, $\|\boldsymbol{M}\|_{(1,1)}$ (max of absolute value of column sums) is equal to $\|\boldsymbol{M}\|_{(\infty,\infty)}$ (max of absolute value of row sums) and this is $O(\lambda_{\boldsymbol{A},J}^{-1})$ as established previously. Furthermore, $\|\boldsymbol{A}^T\|_{(1,1)}=\max_{1\leq i\leq n}\sum_{j=1}^J|a_j(X_i)|\leq l_{1,J}$. Collecting these results together yield
\begin{align*}
|\mathrm{E}_0\widetilde{f}_J(x)-f_0(x)|\lesssim l_{1,J}/\lambda_{\boldsymbol{A},J}+l_{1,J}^2\lambda_{\boldsymbol{A},J}^{-1}\|f_0\|_{\mathcal{G}}h(J)+\|f_0\|_{\mathcal{G}}h(J)\lesssim l_{1,J}/\lambda_{\boldsymbol{A},J}+\|f_0\|_{\mathcal{G}}h(J),
\end{align*}
since $l_{1,J}\leq\sqrt{\lambda_{\boldsymbol{A},J}}$ by the assumption in \eqref{eq:Al1l2}.
\end{proof}

\begin{lemma}\label{lem:varinf}
For any $2\leq J\leq n$,
\begin{align*}
\mathrm{E}[\|f-\mathrm{E}_J(f|\boldsymbol{Y})\|_\infty|\boldsymbol{Y}]\lesssim\widehat{\sigma}_J l_{1,J}\sqrt{\log{(J)}/\lambda_{\boldsymbol{A},J}}.
\end{align*}
\end{lemma}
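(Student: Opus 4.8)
The quantity $f - \mathrm{E}_J(f|\boldsymbol{Y})$ is, under the conditional posterior, a centered Gaussian process indexed by $x \in [0,1]$: it equals $\boldsymbol{a}_J(x)^T[\boldsymbol{\theta} - \mathrm{E}_J(\boldsymbol{\theta}|\boldsymbol{Y})]$, with covariance kernel $\widehat{\sigma}_J^2\, \boldsymbol{a}_J(x)^T \boldsymbol{M} \boldsymbol{a}_J(y)$ where $\boldsymbol{M} = (\boldsymbol{A}^T\boldsymbol{A} + \boldsymbol{\Omega}^{-1})^{-1}$. The natural route is to bound the expected supremum of this Gaussian process by a metric-entropy / Dudley-type argument, which is the standard tool for passing from pointwise variance control to a $\log$-penalized sup-norm bound. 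I would first record the pointwise variance bound: by $\boldsymbol{y}^T\boldsymbol{T}\boldsymbol{y} \le \lambda_{\mathrm{max}}(\boldsymbol{T})\|\boldsymbol{y}\|^2$ together with \eqref{eq:AAO}, the variance at each $x$ satisfies $\mathrm{Var}[f(x)-\mathrm{E}_J(f(x)|\boldsymbol{Y})\,|\,\boldsymbol{Y}] \lesssim \widehat{\sigma}_J^2\, l_{2,J}^2/\lambda_{\boldsymbol{A},J}$, giving the correct scale factor $\widehat{\sigma}_J\, l_{2,J}/\sqrt{\lambda_{\boldsymbol{A},J}}$ for the standard deviation.

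The main work is to control the covering numbers of $[0,1]$ under the intrinsic (canonical) pseudometric $\rho(x,y)^2 = \mathrm{E}[|U(x)-U(y)|^2 \,|\,\boldsymbol{Y}]$ of the process $U = f - \mathrm{E}_J(f|\boldsymbol{Y})$. The plan is to show that this process has a Lipschitz-type modulus, namely $\rho(x,y) \lesssim \widehat{\sigma}_J\, L_J\, |x-y|$ for a suitable constant $L_J$ built from the derivative basis vector $\boldsymbol{\dot{a}}_J$ and $\lambda_{\mathrm{max}}(\boldsymbol{M})$. Since the bases are obtained by shifting and scaling a compactly supported template, the derivative bound scales with the dimension, so $\rho$ grows at most polynomially in $J$ across $[0,1]$; consequently the covering number $N(\varepsilon, [0,1], \rho)$ is bounded by $\mathrm{poly}(J)/\varepsilon$, which contributes a factor $\sqrt{\log(\mathrm{poly}(J)/\varepsilon)}$ inside Dudley's entropy integral. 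Carrying out the Dudley bound $\mathrm{E}[\|U\|_\infty \,|\,\boldsymbol{Y}] \lesssim \int_0^{D} \sqrt{\log N(\varepsilon,[0,1],\rho)}\, d\varepsilon$, where $D \lesssim \widehat{\sigma}_J\, l_{2,J}/\sqrt{\lambda_{\boldsymbol{A},J}}$ is the $\rho$-diameter, the polynomial dependence on $J$ inside the logarithm yields a $\sqrt{\log J}$ factor, and the diameter supplies the scale, producing $\mathrm{E}[\|U\|_\infty|\boldsymbol{Y}] \lesssim \widehat{\sigma}_J\, l_{2,J}\sqrt{\log(J)/\lambda_{\boldsymbol{A},J}}$. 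Finally, since $l_{2,J} \le l_{1,J}$ by definition, this is bounded by $\widehat{\sigma}_J\, l_{1,J}\sqrt{\log(J)/\lambda_{\boldsymbol{A},J}}$, which is the claimed estimate.

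The step I expect to be the main obstacle is establishing the Lipschitz modulus $\rho(x,y) \lesssim \widehat{\sigma}_J L_J |x-y|$ with $L_J$ controlled well enough that $L_J \cdot (\text{diameter})^{-1}$ enters the entropy only polynomially in $J$. This requires bounding $[\boldsymbol{a}_J(x) - \boldsymbol{a}_J(y)]^T \boldsymbol{M} [\boldsymbol{a}_J(x)-\boldsymbol{a}_J(y)]$ via the mean value theorem using $\sup_x \|\boldsymbol{\dot{a}}_J(x)\|$ and $\lambda_{\mathrm{max}}(\boldsymbol{M})$, and then verifying that the resulting covering-number bound has at worst polynomial growth in $J$ so that only a single $\sqrt{\log J}$ survives; the derivative norm $\sup_x\|\boldsymbol{\dot{a}}_J(x)\|$ grows with $J$ (reflecting the scaling of the basis), but it enters only logarithmically after the entropy integral, which is precisely why the final rate carries $\sqrt{\log J}$ rather than a power of $J$. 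Care is needed to track that the scale factor $D$ and the log factor combine correctly and that all constants are uniform over $f_0$ and over $2 \le J \le n$.
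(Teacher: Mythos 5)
Your chaining argument is essentially correct and would deliver the lemma, but it takes a genuinely different route from the paper's. The paper avoids metric entropy altogether: it writes $f(x)-\mathrm{E}_J[f(x)|\boldsymbol{Y}]$ in distribution as $\widehat{\sigma}_J\boldsymbol{a}_J(x)^T(\boldsymbol{A}^T\boldsymbol{A}+\boldsymbol{\Omega}^{-1})^{-1/2}\boldsymbol{Z}$ with $\boldsymbol{Z}\sim\mathrm{N}_J(\boldsymbol{0},\boldsymbol{I}_J)$, applies H\"{o}lder's inequality to obtain $\mathrm{E}[\|f-\widetilde{f}_J\|_\infty|\boldsymbol{Y}]\leq\widehat{\sigma}_J\,l_{1,J}\,\|(\boldsymbol{A}^T\boldsymbol{A}+\boldsymbol{\Omega}^{-1})^{-1/2}\|_{(\infty,\infty)}\,\mathrm{E}\|\boldsymbol{Z}\|_\infty$, bounds the matrix factor by $\lambda_{\boldsymbol{A},J}^{-1/2}$ via Lemma \ref{lem:BBinfty} (this is where the standing bandedness assumption on $\boldsymbol{A}^T\boldsymbol{A}$ and $\boldsymbol{\Omega}^{-1}$ does the work, through polynomial approximation of the analytic map $x\mapsto x^{-1/2}$), and gets $\mathrm{E}\|\boldsymbol{Z}\|_\infty\lesssim\sqrt{\log J}$ as the maximum of $J$ standard Gaussians; so the $\sqrt{\log J}$ comes from a finite maximum, not an entropy integral, and no smoothness of the basis is used. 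Your route buys the slightly sharper scale $l_{2,J}$ (which you then relax to $l_{1,J}$) and dispenses with bandedness, but it imports a hypothesis that is not among the paper's standing assumptions at the level of this abstract lemma: differentiability of the bases with $\sup_x\|\boldsymbol{\dot{a}}_J(x)\|$ growing at most polynomially in the dimension, so that the Lipschitz constant of the canonical metric survives only inside the logarithm. That hypothesis does hold for the two systems the paper targets ($\sup_x\|\boldsymbol{\dot{b}}_{J,q}(x)\|\lesssim J$ and $\sup_x\|\boldsymbol{\dot{\psi}}_J(x)\|\lesssim2^{3J/2}$, as computed in the proof of Corollary \ref{cor:quad}), and is arguably implicit wherever the volume-of-tube machinery requires a $C^1$ curve, but strictly speaking you prove a variant of the lemma under different side conditions rather than the statement under the paper's assumptions. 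Two executional points: derive the modulus via the integral form $\boldsymbol{a}_J(x)-\boldsymbol{a}_J(y)=\int_y^x\boldsymbol{\dot{a}}_J(t)\,dt$ rather than a coordinatewise mean value theorem, giving $\rho(x,y)\leq\widehat{\sigma}_J\lambda_{\mathrm{max}}(\boldsymbol{M})^{1/2}\sup_z\|\boldsymbol{\dot{a}}_J(z)\|\,|x-y|$ cleanly; and since Dudley's bound controls $\mathrm{E}[\sup_xU(x)|\boldsymbol{Y}]$, add the one-point term $\mathrm{E}[|U(x_0)||\boldsymbol{Y}]\lesssim\widehat{\sigma}_Jl_{2,J}\lambda_{\boldsymbol{A},J}^{-1/2}$, which is dominated by the claimed rate for $J\geq2$.
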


\begin{proof}
For any $x\in[0,1]$, $\left[f(x)-\widetilde{f}_J(x)\middle|\boldsymbol{Y}\right]$ is equal in distribution as $\widehat{\sigma}_J\boldsymbol{a}_J(x)^T(\boldsymbol{A}^T\boldsymbol{A}+\boldsymbol{\Omega}^{-1})^{-1/2}\boldsymbol{Z}$ for $\boldsymbol{Z}\sim\mathrm{N}_J(\boldsymbol{0},\boldsymbol{I}_J)$. Then by H\"{o}lder's inequality,
\begin{align*}
\mathrm{E}[\|f-\widetilde{f}_J\|_\infty|\boldsymbol{Y}]\leq\widehat{\sigma}_J l_{1,J}\|(\boldsymbol{A}^T\boldsymbol{A}+\boldsymbol{\Omega}^{-1})^{-1/2}\|_{(\infty,\infty)}\mathrm{E}\|\boldsymbol{Z}\|_\infty.
\end{align*}
Utilizing Lemma \ref{lem:BBinfty} with $g(x)=x^{-1/2}$ and the bandedness of both $\boldsymbol{A}^T\boldsymbol{A}$ and $\boldsymbol{\Omega}^{-1}$, we deduce that $\|(\boldsymbol{A}^T\boldsymbol{A}+\boldsymbol{\Omega}^{-1})^{-1/2}\|_{(\infty,\infty)}\lesssim\lambda_{\boldsymbol{A},J}^{-1/2}$. Then by Lemma 2.3.4 of \citep{nickl2016}, it follows that $\mathrm{E}\|\boldsymbol{Z}\|_\infty\lesssim\sqrt{\log{J}}$ for $J\geq2$, and the upper bound is proved by multiplying all the bounds established.
\end{proof}

\begin{lemma}\label{lem:concentrate}
For any $2\leq J\leq n$ and $x\geq0$, there exist constants $G_1,G_2>0$ such that
\begin{align}\label{eq:concentrate}
P_0\left(\|\mathrm{E}_J(f|\boldsymbol{Y})-\mathrm{E}_0\mathrm{E}_J(f|\boldsymbol{Y})\|_\infty\geq G_1l_{1,J}\sqrt{\frac{\log{J}}{\lambda_{\boldsymbol{A},J}}}+l_{2,J}\sqrt{2G_2\frac{x}{\lambda_{\boldsymbol{A},J}}}\right)\leq e^{-x}.
\end{align}
\end{lemma}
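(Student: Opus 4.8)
The plan is to recognize the centered quantity as a mean-zero Gaussian process in $x$ whose law does not involve $f_0$, and then to control its supremum in two pieces: an expected-supremum term and a fluctuation term handled by Borell's inequality. Writing $\boldsymbol{Y}=\boldsymbol{F}_0+\boldsymbol{\varepsilon}$ with $\boldsymbol{\varepsilon}\sim\mathrm{N}(\boldsymbol{0},\sigma_0^2\boldsymbol{I}_n)$ and $\boldsymbol{F}_0=(f_0(X_1),\dotsc,f_0(X_n))^T$, the closed form \eqref{eq:pmean} shows that upon subtracting $\mathrm{E}_0$ the $f_0$- and $\boldsymbol{\eta}$-dependent parts cancel, leaving
\[
Z(x):=\mathrm{E}_J[f(x)|\boldsymbol{Y}]-\mathrm{E}_0\mathrm{E}_J[f(x)|\boldsymbol{Y}]=\boldsymbol{a}_J(x)^T\boldsymbol{M}\boldsymbol{A}^T\boldsymbol{\varepsilon},
\]
a centered Gaussian process indexed by $x\in[0,1]$, almost surely continuous (hence separable) because $\boldsymbol{a}_J$ is. Since $Z$ is entirely free of $f_0$, the resulting bound is automatically uniform over $\mathcal{G}$, as the applications require.

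For the first, $\log J$ term I would bound $\mathrm{E}_0\|Z\|_\infty$ exactly as in Lemma \ref{lem:varinf}. By H\"older's inequality $\|Z\|_\infty\leq l_{1,J}\|\boldsymbol{M}\boldsymbol{A}^T\boldsymbol{\varepsilon}\|_\infty$, and $\boldsymbol{M}\boldsymbol{A}^T\boldsymbol{\varepsilon}$ is a $J$-dimensional Gaussian vector with covariance $\sigma_0^2\boldsymbol{M}\boldsymbol{A}^T\boldsymbol{A}\boldsymbol{M}$. The key algebraic observation is the identity $\boldsymbol{M}\boldsymbol{A}^T\boldsymbol{A}\boldsymbol{M}=\boldsymbol{M}_0$, coming from $\boldsymbol{A}^T\boldsymbol{A}=\boldsymbol{M}^{-1}-\boldsymbol{\Omega}^{-1}$, together with $\boldsymbol{M}_0=\boldsymbol{M}-\boldsymbol{M}\boldsymbol{\Omega}^{-1}\boldsymbol{M}\leq\boldsymbol{M}$; hence each coordinate has variance at most $\sigma_0^2\lambda_{\mathrm{max}}(\boldsymbol{M})\lesssim\sigma_0^2/\lambda_{\boldsymbol{A},J}$ by \eqref{eq:AAO}. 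The standard maximal inequality for the maximum of $J\geq2$ (possibly dependent) Gaussians (Lemma 2.3.4 of \citep{nickl2016}, as used in Lemma \ref{lem:varinf}) then gives $\mathrm{E}_0\|\boldsymbol{M}\boldsymbol{A}^T\boldsymbol{\varepsilon}\|_\infty\lesssim\sigma_0\sqrt{\log{J}/\lambda_{\boldsymbol{A},J}}$, so that $\mathrm{E}_0\|Z\|_\infty\leq G_1 l_{1,J}\sqrt{\log{J}/\lambda_{\boldsymbol{A},J}}$ for a suitable $G_1>0$ absorbing $\sigma_0$ and the eigenvalue constant.

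For the fluctuation I would compute the weak variance $\nu^2:=\sup_{x\in[0,1]}\mathrm{Var}_0 Z(x)=\sigma_0^2\sup_{x\in[0,1]}\boldsymbol{a}_J(x)^T\boldsymbol{M}_0\boldsymbol{a}_J(x)$. Using $\boldsymbol{M}_0\leq\boldsymbol{M}$ and $\lambda_{\mathrm{max}}(\boldsymbol{M})\lesssim\lambda_{\boldsymbol{A},J}^{-1}$, this is at most $\sigma_0^2\lambda_{\mathrm{max}}(\boldsymbol{M})\sup_x\|\boldsymbol{a}_J(x)\|^2\leq G_2 l_{2,J}^2/\lambda_{\boldsymbol{A},J}$ for some $G_2>0$. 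Borell's inequality (Proposition A.2.1 of \citep{empirical}, exactly as invoked in the proof of Theorem \ref{th:rate}) applied to $\|Z\|_\infty$ gives, for every $t\geq0$,
\[
P_0\bigl(\|Z\|_\infty\geq\mathrm{E}_0\|Z\|_\infty+t\bigr)\leq e^{-t^2/(2\nu^2)}.
\]
Taking $t=l_{2,J}\sqrt{2G_2 x/\lambda_{\boldsymbol{A},J}}\geq\sqrt{2\nu^2 x}$ and inserting the bound on $\mathrm{E}_0\|Z\|_\infty$ from the preceding paragraph yields precisely \eqref{eq:concentrate}.

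The only genuinely delicate point, which I expect to be the main obstacle, is the appearance of two \emph{different} basis norms in the two terms. The $\ell_1$-norm $l_{1,J}$ governs the expected supremum, because it arises from a global H\"older bound applied to the whole process at once, whereas the $\ell_2$-norm $l_{2,J}$ governs the weak variance, which is a pointwise quantity. The two terms must therefore be produced by genuinely different devices (a maximal inequality for the mean versus Borell's concentration driven by the pointwise variance); collapsing both into a single $l_{1,J}$ bound would inflate the fluctuation term and destroy the sharpness needed for the later Lepski arguments in Propositions \ref{prop:sigma}--\ref{lem:upper} and Lemma \ref{lem:lower}.
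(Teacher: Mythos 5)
Your proof is correct, and its skeleton is the paper's: identify $Z(x)=\boldsymbol{a}_J(x)^T\boldsymbol{M}\boldsymbol{A}^T\boldsymbol{\varepsilon}$ as a centered Gaussian process, bound the weak variance $\sup_x\mathrm{Var}_0 Z(x)$ by $G_2l_{2,J}^2/\lambda_{\boldsymbol{A},J}$ via $\boldsymbol{M}_0\leq\boldsymbol{M}$ and \eqref{eq:AAO}, bound $\mathrm{E}_0\|Z\|_\infty$ by $G_1l_{1,J}\sqrt{\log(J)/\lambda_{\boldsymbol{A},J}}$ via H\"older's inequality plus the Gaussian maximal inequality, and conclude with Borell's inequality — this is exactly the paper's argument, including the same pairing of $l_{2,J}$ with the pointwise variance and $l_{1,J}$ with the expected supremum. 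The one genuine divergence is in the expected-supremum step. The paper factors $\mathrm{E}_0\|Z\|_\infty\leq l_{1,J}\|\boldsymbol{M}\|_{(\infty,\infty)}\mathrm{E}_0\|\boldsymbol{A}^T\boldsymbol{\varepsilon}\|_\infty$, which requires $\|\boldsymbol{M}\|_{(\infty,\infty)}\lesssim\lambda_{\boldsymbol{A},J}^{-1}$ from Lemma \ref{lem:BBinfty} — a nontrivial result on analytic functions of banded matrices that leans on the bandedness assumptions for $\boldsymbol{A}^T\boldsymbol{A}$ and $\boldsymbol{\Omega}^{-1}$ — and then controls $\mathrm{E}_0\|\boldsymbol{A}^T\boldsymbol{\varepsilon}\|_\infty\lesssim\sqrt{\lambda_{\boldsymbol{A},J}\log{J}}$ through the diagonal entries of $\boldsymbol{A}^T\boldsymbol{A}$. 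You instead keep $\boldsymbol{W}=\boldsymbol{M}\boldsymbol{A}^T\boldsymbol{\varepsilon}$ intact, observe its covariance is exactly $\sigma_0^2\boldsymbol{M}_0=\sigma_0^2\boldsymbol{M}\boldsymbol{A}^T\boldsymbol{A}\boldsymbol{M}\leq\sigma_0^2\boldsymbol{M}$, so every coordinate variance is at most $\sigma_0^2\lambda_{\mathrm{max}}(\boldsymbol{M})\lesssim\sigma_0^2\lambda_{\boldsymbol{A},J}^{-1}$, and apply the maximal inequality directly to $\|\boldsymbol{W}\|_\infty$. Your route is more elementary and more general: it uses only the spectral bounds \eqref{eq:prior} and \eqref{eq:AAO}, with no bandedness required, and the two bounds agree since $l_{1,J}\cdot\lambda_{\boldsymbol{A},J}^{-1}\cdot\sqrt{\lambda_{\boldsymbol{A},J}\log{J}}=l_{1,J}\sqrt{\log(J)/\lambda_{\boldsymbol{A},J}}$. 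What the paper's heavier route buys is consistency with the rest of its machinery — the same $(\infty,\infty)$-norm device is reused in Lemmas \ref{lem:pbias} and \ref{lem:varinf}, where (for the conditional posterior draw in Lemma \ref{lem:varinf}) a clean covariance identity like yours is not available and the banded-matrix bound genuinely earns its keep.
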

\begin{proof}
By our global assumption on the true model, $\widetilde{f}_J-\mathrm{E}_0\widetilde{f}_J$ is under $P_0$ a mean $0$ Gaussian process, such that its variance function is
\begin{align*}
\mathrm{Var}_0\widetilde{f}_J(x)&=\sigma_0^2\boldsymbol{a}_J(x)^T\boldsymbol{M}_0\boldsymbol{a}_J(x)
\leq\sigma_0^2\lambda_{\mathrm{max}}(\boldsymbol{M})\|\boldsymbol{a}_J(x)\|^2\leq G_2l_{2,J}^2/\lambda_{\boldsymbol{A},J}
\end{align*}
for any $x\in[0,1]$ since $\boldsymbol{M}_0\leq\boldsymbol{M}$, the fact $\boldsymbol{y}^T\boldsymbol{Ty}\leq\lambda_{\mathrm{max}}(\boldsymbol{T})\|\boldsymbol{y}\|^2$ for any square matrix $\boldsymbol{T}$, and the last inequality from \eqref{eq:AAO}. By the Borell's inequality (see Proposition A.2.1 from \citep{empirical} or Theorem 2.5.8 in \citep{nickl2016}), we have
\begin{align}\label{eq:gconcen}
P_0\left(\|\widetilde{f}_J-\mathrm{E}_0\widetilde{f}_J\|_\infty\geq\mathrm{E}_0\|\widetilde{f}_J-\mathrm{E}_0\widetilde{f}_J\|_\infty+\sqrt{2G_2l_{2,J}^2\lambda_{\boldsymbol{A},J}^{-1}x}\right)\leq e^{-x}.
\end{align}
Define $\boldsymbol{\eta}=\boldsymbol{A}^T\boldsymbol{\varepsilon}$. Then $\widetilde{f}_J(x)-\mathrm{E}_0\widetilde{f}_J(x)$ can be written as $\boldsymbol{a}_J(x)^T\boldsymbol{M}\boldsymbol{\eta}$ for any $x\in[0,1]$, and it follows from H\"{o}lder's inequality that
\begin{align*}
\mathrm{E}_0\|\widetilde{f}_J-\mathrm{E}_0\widetilde{f}_J\|_\infty
&\leq\sup_{x\in[0,1]}\|\boldsymbol{a}_J(x)\|_1\|\boldsymbol{M}\|_{(\infty,\infty)}\mathrm{E}_0\|\boldsymbol{\eta}\|_\infty
\lesssim l_{1,J}\lambda_{\boldsymbol{A},J}^{-1}\mathrm{E}_0\|\boldsymbol{\eta}\|_\infty,
\end{align*}
where we have applied Lemma \ref{lem:BBinfty} with $g(x)=x^{-1}$ to bound $\|\boldsymbol{M}\|_{(\infty,\infty)}\lesssim\lambda_{\boldsymbol{A},J}^{-1}$. Note that by our global assumption on the true model, $\boldsymbol{\eta}\sim\mathrm{N}(\boldsymbol{0},\sigma_0^2\boldsymbol{A}^T\boldsymbol{A})$ under $P_0$. Now by Lemma 2.3.4 of \citep{nickl2016}, we have for $Z_m\sim\mathrm{N}(0,1), m=1,\dotsc,J$, i.i.d.~that
\begin{align*}
\mathrm{E}_0\|\boldsymbol{\eta}\|_\infty\leq\max_{1\leq m\leq J}\sqrt{\sigma_0^2(\boldsymbol{A}^T\boldsymbol{A})_{m,m}}\mathrm{E}\left(\max_{1\leq m\leq J}|Z_m|\right)\lesssim\sqrt{\lambda_{\boldsymbol{A},J}\log{(J)}},
\end{align*}
and used \eqref{eq:AAO} to bound $(\boldsymbol{A}^T\boldsymbol{A})_{m,m}$. Thus, $\mathrm{E}_0\|\widetilde{f}_J-\mathrm{E}_0\widetilde{f}_J\|_\infty\leq G_1l_{1,J}\sqrt{\log{(J)}/\lambda_{\boldsymbol{A},J}}$ for some constant $G_1>0$. Substituting this bound back into \eqref{eq:gconcen} yields the desired result.
\end{proof}

\begin{lemma}\label{lem:quad}
For any $x\in[0,1]$ and any symmetric matrix $\boldsymbol{\Sigma}$ of conformable dimensions,
\begin{align*}
\boldsymbol{a}_J(x)^T\boldsymbol{\Sigma}^T\boldsymbol{R}^T\boldsymbol{M}\boldsymbol{R}\boldsymbol{\Sigma}\boldsymbol{a}_J(x)
\leq4\lambda_{\mathrm{max}}(\boldsymbol{M})\lambda_{\mathrm{max}}(\boldsymbol{\Sigma})^2\|\boldsymbol{a}_J(x)\|^4\|\boldsymbol{\dot{a}}_J(x)\|^2.
\end{align*}
For the lower bound, assume that for any $x\in[0,1]$ there is a $\boldsymbol{U}(x)$ such that $\|\boldsymbol{U}(x)\|^2\leq\xi$ for some $\xi>0$ that may depend on $J$, and $\langle\boldsymbol{U}(x),\boldsymbol{a}_J(x)\rangle=0$, then
\begin{align*}
\boldsymbol{a}_J(x)^T\boldsymbol{\Sigma}^T\boldsymbol{R}^T\boldsymbol{M}\boldsymbol{R}\boldsymbol{\Sigma}\boldsymbol{a}_J(x)\geq\lambda_{\mathrm{min}}(\boldsymbol{M})\lambda_{\mathrm{min}}(\boldsymbol{\Sigma})^2\|\boldsymbol{a}_J(x)\|^4
\langle\boldsymbol{U}(x),\boldsymbol{\dot{a}}_J(x)\rangle^2\xi^{-1}.
\end{align*}
\end{lemma}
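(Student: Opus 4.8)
The plan is to exploit the rank-two structure of $\boldsymbol{R}=\boldsymbol{\dot a}_J(x)\boldsymbol{a}_J(x)^T-\boldsymbol{a}_J(x)\boldsymbol{\dot a}_J(x)^T$ to reduce the whole expression to a single Euclidean norm. First I would observe that, since $\boldsymbol{\Sigma}$ is symmetric and $\boldsymbol{a}_J(x)^T\boldsymbol{\Sigma}^T\boldsymbol{R}^T=(\boldsymbol{R}\boldsymbol{\Sigma}\boldsymbol{a}_J(x))^T$, the quadratic form is nothing but $\|\boldsymbol{M}^{1/2}\boldsymbol{R}\boldsymbol{\Sigma}\boldsymbol{a}_J(x)\|^2$. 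Abbreviating $\alpha:=\boldsymbol{a}_J(x)^T\boldsymbol{\Sigma}\boldsymbol{a}_J(x)$ and $\beta:=\boldsymbol{\dot a}_J(x)^T\boldsymbol{\Sigma}\boldsymbol{a}_J(x)$, the rank-two form applied to $\boldsymbol{\Sigma}\boldsymbol{a}_J(x)$ yields the key identity
\begin{align*}
\boldsymbol{R}\boldsymbol{\Sigma}\boldsymbol{a}_J(x)=\boldsymbol{\dot a}_J(x)\langle\boldsymbol{a}_J(x),\boldsymbol{\Sigma}\boldsymbol{a}_J(x)\rangle-\boldsymbol{a}_J(x)\langle\boldsymbol{\dot a}_J(x),\boldsymbol{\Sigma}\boldsymbol{a}_J(x)\rangle=\alpha\boldsymbol{\dot a}_J(x)-\beta\boldsymbol{a}_J(x),
\end{align*}
so that the entire quantity equals $(\alpha\boldsymbol{\dot a}_J(x)-\beta\boldsymbol{a}_J(x))^T\boldsymbol{M}(\alpha\boldsymbol{\dot a}_J(x)-\beta\boldsymbol{a}_J(x))$. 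Everything afterwards is a matter of bounding this two-term vector.

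For the upper bound I would replace $\boldsymbol{M}$ by $\lambda_{\mathrm{max}}(\boldsymbol{M})\boldsymbol{I}$ and estimate $\|\alpha\boldsymbol{\dot a}_J(x)-\beta\boldsymbol{a}_J(x)\|^2\le 2\alpha^2\|\boldsymbol{\dot a}_J(x)\|^2+2\beta^2\|\boldsymbol{a}_J(x)\|^2$. Using $\alpha\le\lambda_{\mathrm{max}}(\boldsymbol{\Sigma})\|\boldsymbol{a}_J(x)\|^2$ and, by Cauchy--Schwarz, $|\beta|\le\lambda_{\mathrm{max}}(\boldsymbol{\Sigma})\|\boldsymbol{a}_J(x)\|\,\|\boldsymbol{\dot a}_J(x)\|$, both terms collapse to $2\lambda_{\mathrm{max}}(\boldsymbol{\Sigma})^2\|\boldsymbol{a}_J(x)\|^4\|\boldsymbol{\dot a}_J(x)\|^2$, which after multiplying by $\lambda_{\mathrm{max}}(\boldsymbol{M})$ produces exactly the stated factor of $4$.

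The lower bound is where the real work lies, and the obstacle is that $\boldsymbol{\dot a}_J(x)$ and $\boldsymbol{a}_J(x)$ may be nearly parallel, so that $\alpha\boldsymbol{\dot a}_J(x)-\beta\boldsymbol{a}_J(x)$ could suffer cancellation and a naive lower bound on its norm would fail. The device that resolves this is the auxiliary vector $\boldsymbol{U}(x)$ with $\langle\boldsymbol{U}(x),\boldsymbol{a}_J(x)\rangle=0$: projecting onto it annihilates the $\beta\boldsymbol{a}_J(x)$ term and leaves $\langle\boldsymbol{U}(x),\boldsymbol{R}\boldsymbol{\Sigma}\boldsymbol{a}_J(x)\rangle=\alpha\langle\boldsymbol{U}(x),\boldsymbol{\dot a}_J(x)\rangle$. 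Then Cauchy--Schwarz in the form $\|\boldsymbol{R}\boldsymbol{\Sigma}\boldsymbol{a}_J(x)\|\ge|\langle\boldsymbol{U}(x),\boldsymbol{R}\boldsymbol{\Sigma}\boldsymbol{a}_J(x)\rangle|/\|\boldsymbol{U}(x)\|$, together with $\|\boldsymbol{U}(x)\|^2\le\xi$ and $\alpha\ge\lambda_{\mathrm{min}}(\boldsymbol{\Sigma})\|\boldsymbol{a}_J(x)\|^2$, gives $\|\boldsymbol{R}\boldsymbol{\Sigma}\boldsymbol{a}_J(x)\|^2\ge\lambda_{\mathrm{min}}(\boldsymbol{\Sigma})^2\|\boldsymbol{a}_J(x)\|^4\langle\boldsymbol{U}(x),\boldsymbol{\dot a}_J(x)\rangle^2\xi^{-1}$. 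Bounding $\boldsymbol{M}$ below by $\lambda_{\mathrm{min}}(\boldsymbol{M})\boldsymbol{I}$ then completes the claimed inequality. The only subtlety to check is the sign of $\alpha$: since in every application $\boldsymbol{\Sigma}$ is positive definite, $\alpha>0$ and squaring causes no loss.
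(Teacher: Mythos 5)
Your proposal is correct and takes essentially the same route as the paper's own proof: sandwiching $\boldsymbol{M}$ between $\lambda_{\mathrm{min}}(\boldsymbol{M})\boldsymbol{I}$ and $\lambda_{\mathrm{max}}(\boldsymbol{M})\boldsymbol{I}$, the rank-two identity $\boldsymbol{R}\boldsymbol{\Sigma}\boldsymbol{a}_J(x)=\alpha\boldsymbol{\dot a}_J(x)-\beta\boldsymbol{a}_J(x)$, the bound $\|\boldsymbol{x}-\boldsymbol{y}\|^2\leq2\|\boldsymbol{x}\|^2+2\|\boldsymbol{y}\|^2$ with Cauchy--Schwarz for the upper bound, and projection onto $\boldsymbol{U}(x)$ via Cauchy--Schwarz (with orthogonality annihilating the $\boldsymbol{a}_J(x)$ term) for the lower bound. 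Your closing observation that squaring $\alpha\geq\lambda_{\mathrm{min}}(\boldsymbol{\Sigma})\|\boldsymbol{a}_J(x)\|^2$ requires $\lambda_{\mathrm{min}}(\boldsymbol{\Sigma})\geq0$ is a sensible precaution that the paper leaves implicit, since $\boldsymbol{\Sigma}$ is positive definite in every application.
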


\begin{proof}
Since the quadratic form in question is greater than $\lambda_{\mathrm{min}}(\boldsymbol{M})\boldsymbol{a}_J(x)^T\boldsymbol{\Sigma R}^T\boldsymbol{R\Sigma a}_J(x)$ and less than $\lambda_{\mathrm{max}}(\boldsymbol{M})\boldsymbol{a}_J(x)^T\boldsymbol{\Sigma R}^T\boldsymbol{R\Sigma a}_J(x)$, we can reduce the problem to finding upper and lower bounds for $\boldsymbol{a}_J(x)^T\boldsymbol{\Sigma R}^T\boldsymbol{R\Sigma a}_J(x)$.

For the upper bound, we begin by defining functions $g_1(x):=\boldsymbol{\dot{a}}_J(x)^T\boldsymbol{\Sigma}\boldsymbol{a}_J(x)$ and $g_2(x):=\boldsymbol{a}_J(x)^T\boldsymbol{\Sigma}\boldsymbol{a}_J(x)$. Using the fact that $\|\boldsymbol{x}-\boldsymbol{y}\|^2\leq2\|\boldsymbol{x}\|^2+2\|\boldsymbol{y}\|^2$ (Cauchy-Schwartz inequality and geometric-arithmetic mean inequality $\sqrt{ab}\leq(a+b)/2$), we have
\begin{align}
\boldsymbol{a}_J(x)^T\boldsymbol{\Sigma}^T\boldsymbol{R}^T\boldsymbol{R}\boldsymbol{\Sigma}\boldsymbol{a}_J(x)&=\|g_1(x)\boldsymbol{a}_J(x)-g_2(x)\boldsymbol{\dot{a}}_J(x)\|^2\label{eq:quad}\\
&\leq2g_1(x)^2\|\boldsymbol{a}_J(x)\|^2+2g_2(x)^2\|\boldsymbol{\dot{a}}_J(x)\|^2\nonumber.
\end{align}
Now $g_2(x)\leq\lambda_{\mathrm{max}}(\boldsymbol{\Sigma})\|\boldsymbol{a}_J(x)\|^2$. Then by the Cauchy-Schwartz inequality and norm sub-multiplicative property, we have $|g_1(x)|\leq\|\boldsymbol{\dot{a}}_J(x)\||\lambda_{\mathrm{max}}(\boldsymbol{\Sigma})|\|\boldsymbol{a}_J(x)\|$. These two bounds therefore imply that
\begin{align*}
\boldsymbol{a}_J(x)^T\boldsymbol{\Sigma}^T\boldsymbol{R}^T\boldsymbol{R}\boldsymbol{\Sigma}\boldsymbol{a}_J(x)\leq4\|\boldsymbol{a}_J(x)\|^4\|\boldsymbol{\dot{a}}_J(x)\|^2\lambda_{\mathrm{max}}(\boldsymbol{\Sigma})^2.
\end{align*}
For the lower bound, we know that by assumption $\|\boldsymbol{U}(x)\|^2\leq \xi$ and $\langle\boldsymbol{U}(x),\boldsymbol{a}_J(x)\rangle=0$. Since $g_2(x)\geq\lambda_{\mathrm{min}}(\boldsymbol{\Sigma})\|\boldsymbol{a}_J(x)\|^2$, then in view of \eqref{eq:quad} and the Cauchy-Schwartz inequality,
\begin{align*}
\boldsymbol{a}_J(x)^T\boldsymbol{\Sigma}^T\boldsymbol{R}^T\boldsymbol{R}\boldsymbol{\Sigma}\boldsymbol{a}_J(x)\xi&\geq\|g_1(x)\boldsymbol{a}_J(x)-g_2(x)\boldsymbol{\dot{a}}_J(x)\|^2\|\boldsymbol{U}(x)\|^2\\
&\geq\langle\boldsymbol{U}(x),g_1(x)\boldsymbol{a}_J(x)-g_2(x)\boldsymbol{\dot{a}}_J(x)\rangle^2\\
&=\left[g_1(x)\langle\boldsymbol{U}(x),\boldsymbol{a}_J(x)\rangle-g_2(x)\langle\boldsymbol{U}(x),\boldsymbol{\dot{a}}_J(x)\rangle\right]^2\\
&\geq\lambda_{\mathrm{min}}(\boldsymbol{\Sigma})^2\|\boldsymbol{a}_J(x)\|^4\langle\boldsymbol{U}(x),\boldsymbol{\dot{a}}_J(x)\rangle^2.\qedhere
\end{align*}
\end{proof}

\begin{corollary}\label{cor:quad}
By specializing Lemma \ref{lem:quad} to B-splines, we will obtain:
\begin{align}\label{eq:quad1}
J^2\lambda_{\mathrm{min}}(\boldsymbol{M})\lambda_{\mathrm{min}}(\boldsymbol{\Sigma})^2\lesssim\boldsymbol{b}_{J,q}(x)^T\boldsymbol{\Sigma}^T\boldsymbol{R}^T\boldsymbol{M}\boldsymbol{R}\boldsymbol{\Sigma}\boldsymbol{b}_{J,q}(x)\lesssim J^2\lambda_{\mathrm{max}}(\boldsymbol{M})\lambda_{\mathrm{max}}(\boldsymbol{\Sigma})^2;
\end{align}
and for the wavelet bases:
\begin{align}\label{eq:quad2}
2^{5J}\lambda_{\mathrm{min}}(\boldsymbol{M})\lambda_{\mathrm{min}}(\boldsymbol{\Sigma})^2\lesssim\boldsymbol{\psi}_J(x)^T\boldsymbol{\Sigma}^T\boldsymbol{R}^T\boldsymbol{M}\boldsymbol{R}\boldsymbol{\Sigma}
\boldsymbol{\psi}_J(x)\lesssim2^{5J}\lambda_{\mathrm{max}}(\boldsymbol{M})\lambda_{\mathrm{max}}(\boldsymbol{\Sigma})^2.
\end{align}
\end{corollary}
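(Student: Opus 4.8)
The plan is to derive both inequalities directly from Lemma \ref{lem:quad}, so that the entire task reduces to pinning down, uniformly in $x\in[0,1]$, the three scalar quantities $\|\boldsymbol{a}_J(x)\|$, $\|\boldsymbol{\dot{a}}_J(x)\|$, and the length of the component of $\boldsymbol{\dot{a}}_J(x)$ orthogonal to $\boldsymbol{a}_J(x)$. For the upper bounds I would feed the crude estimate of Lemma \ref{lem:quad} with the basis-specific magnitudes; for the lower bounds I would make the concrete choice $\boldsymbol{U}(x)=\boldsymbol{\dot{a}}_J(x)-\langle\boldsymbol{\dot{a}}_J(x),\boldsymbol{a}_J(x)\rangle\|\boldsymbol{a}_J(x)\|^{-2}\boldsymbol{a}_J(x)$, the orthogonal projection of the velocity onto $\boldsymbol{a}_J(x)^\perp$, and take $\xi=\|\boldsymbol{U}(x)\|^2$. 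With this choice $\langle\boldsymbol{U}(x),\boldsymbol{\dot{a}}_J(x)\rangle=\|\boldsymbol{U}(x)\|^2$, so the lower bound of Lemma \ref{lem:quad} collapses to $\lambda_{\mathrm{min}}(\boldsymbol{M})\lambda_{\mathrm{min}}(\boldsymbol{\Sigma})^2\|\boldsymbol{a}_J(x)\|^4\|\boldsymbol{U}(x)\|^2$, and everything hinges on a lower bound for $\|\boldsymbol{a}_J(x)\|^4\|\boldsymbol{U}(x)\|^2$.

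Next I would record the pointwise magnitudes. Since $\boldsymbol{a}_J$ and $\boldsymbol{\dot{a}}_J$ share the same fixed-cardinality set of active coordinates at each $x$, the computation is local. For B-splines, Lemma \ref{lem:b2} gives $\|\boldsymbol{b}_{J,q}(x)\|\asymp1$; writing each active spline on its knot span in the local variable shows $B_{j,q}'(x)$ is the corresponding fixed polynomial derivative divided by a knot increment, which under quasi-uniformity is $\asymp J$, whence $\|\boldsymbol{\dot{b}}_{J,q}(x)\|\asymp J$. Therefore $\|\boldsymbol{b}_{J,q}(x)\|^4\|\boldsymbol{\dot{b}}_{J,q}(x)\|^2\asymp J^2$, which inserted into the upper bound of Lemma \ref{lem:quad} yields the right-hand side of \eqref{eq:quad1}. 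For wavelets, Lemma \ref{lem:w2} gives $\|\boldsymbol{\psi}_J(x)\|\asymp2^{J/2}$; because $\psi_{j,k}'(x)=2^{3j/2}\psi'(2^jx-k)$ and only $O(1)$ translates are active per level, the finest level dominates the geometric sum and $\|\boldsymbol{\dot{\psi}}_J(x)\|\asymp2^{3J/2}$, so $\|\boldsymbol{\psi}_J(x)\|^4\|\boldsymbol{\dot{\psi}}_J(x)\|^2\asymp2^{5J}$, giving the upper bound in \eqref{eq:quad2}.

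For the lower bounds I would use the identity $\|\boldsymbol{U}(x)\|^2=\|\boldsymbol{a}_J(x)\|^{-2}\big(\|\boldsymbol{a}_J(x)\|^2\|\boldsymbol{\dot{a}}_J(x)\|^2-\langle\boldsymbol{a}_J(x),\boldsymbol{\dot{a}}_J(x)\rangle^2\big)$, whose numerator is the Gram determinant of the pair $\{\boldsymbol{a}_J(x),\boldsymbol{\dot{a}}_J(x)\}$. The crucial point is that this determinant does not degenerate: in the local variable the active function-values and derivative-values are, up to the $\asymp J$ (resp.\ $\asymp2^{3J/2}$) scaling of the derivative, a fixed pair of vectors, and they are linearly independent because point evaluation $s\mapsto s(u)$ and derivative evaluation $s\mapsto s'(u)$ are linearly independent functionals on the local polynomial (resp.\ wavelet) space for every $u$. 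A compactness argument—the quasi-uniform knot ratios range over a compact set bounded away from degeneracy for B-splines, and the rescaled finest-scale configuration takes finitely many shapes (one periodic interior shape plus $O(1)$ boundary-corrected ones) for wavelets—then upgrades this to a uniform strictly positive lower bound. Combined with the magnitudes above, this gives $\|\boldsymbol{a}_J(x)\|^4\|\boldsymbol{U}(x)\|^2\gtrsim J^2$ (resp.\ $2^{5J}$) and hence the lower bounds in \eqref{eq:quad1} and \eqref{eq:quad2}.

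The main obstacle is precisely this uniform non-degeneracy of the projected velocity $\boldsymbol{U}(x)$, i.e.\ ruling out that $\boldsymbol{\dot{a}}_J(x)$ becomes asymptotically parallel to $\boldsymbol{a}_J(x)$ for some $x$: a naive triangle-inequality split only shows that both $\|\boldsymbol{\dot{a}}_J(x)\|^2$ and the correction $\langle\boldsymbol{a}_J(x),\boldsymbol{\dot{a}}_J(x)\rangle^2\|\boldsymbol{a}_J(x)\|^{-2}$ are of the same order ($J^2$, resp.\ $2^{3J}$), leaving open a possible cancellation. The B-spline case is controlled by the finite-dimensional per-span reduction, but the wavelet case is more delicate because $O(J)$ scales are simultaneously active; there I would either isolate the dominant finest scale when forming $\boldsymbol{U}(x)$ or argue scale-by-scale, checking that the across-scale contributions to $\langle\boldsymbol{a}_J(x),\boldsymbol{\dot{a}}_J(x)\rangle$ cannot conspire to cancel the finest-scale Gram determinant.
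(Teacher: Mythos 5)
Your upper bounds and your overall reduction are sound, and the lower-bound strategy is genuinely different from the paper's. The paper proves the upper bounds exactly as you do in substance (for B-splines it routes $\|\boldsymbol{\dot{b}}_{J,q}(x)\|^2\lesssim J^2$ through $\|\boldsymbol{W}^T\boldsymbol{W}\|_{(\infty,\infty)}$ and quasi-uniformity; for wavelets through the level sum $\sum_{j\leq J-1}2^{3j}\lesssim 2^{3J}$), so that half matches. For the lower bounds, the paper does not use the orthogonal projection: it builds explicit sparse test vectors. For B-splines it takes $u_{i_x}(x)=1-B_{i_x,q}(x)$ and $u_j(x)=-B_{i_x,q}(x)$ on the remaining $q-1$ active indices, so that orthogonality to $\boldsymbol{b}_{J,q}(x)$ is an exact partition-of-unity identity and the pairing with the derivative collapses, via the B-spline derivative formula, to the single term $-(q-1)B_{i_x,q-1}(x)/(t_{i_x}-t_{i_x-q+1})\asymp J$; for wavelets it takes a two-entry antisymmetric block per level, $(\psi_{j,i_x+1}(x),-\psi_{j,i_x}(x))$, exactly orthogonal by antisymmetry, whose pairing with $\boldsymbol{\dot{\psi}}_J(x)$ becomes a sum of Wronskian-type terms $\sum_j 2^{2j}\left[\psi(2^jx-i_x-1)\psi'(2^jx-i_x)-\psi(2^jx-i_x)\psi'(2^jx-i_x-1)\right]$. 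Your choice $\boldsymbol{U}(x)=P_{\boldsymbol{a}_J(x)^\perp}\boldsymbol{\dot{a}}_J(x)$ is in fact the \emph{optimal} admissible vector in Lemma \ref{lem:quad}, since for any $\boldsymbol{U}\perp\boldsymbol{a}_J(x)$ one has $\langle\boldsymbol{U},\boldsymbol{\dot{a}}_J(x)\rangle^2/\|\boldsymbol{U}\|^2\leq\|P_{\boldsymbol{a}_J(x)^\perp}\boldsymbol{\dot{a}}_J(x)\|^2$; so any bound obtainable from the paper's construction is implied by yours, and your per-span compactness argument (fixed local polynomial configuration, evaluation and derivative-evaluation linearly independent, quasi-uniform knot ratios in a fixed compact set) is a legitimate and arguably cleaner substitute in the B-spline case.

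The wavelet lower bound, however, has a genuine gap, and you have located it yourself without closing it. You need the Gram determinant at full order, $\|\boldsymbol{\psi}_J(x)\|^2\|\boldsymbol{\dot{\psi}}_J(x)\|^2-\langle\boldsymbol{\psi}_J(x),\boldsymbol{\dot{\psi}}_J(x)\rangle^2\gtrsim 2^{4J}$ uniformly in $x$, i.e.\ the angle between $\boldsymbol{\psi}_J(x)$ and $\boldsymbol{\dot{\psi}}_J(x)$ bounded away from zero; since both terms are of the same order $2^{4J}$, order bookkeeping cannot exclude cancellation. Your proposed compactness fix does not survive across scales: the joint active configuration of the $O(J)$ levels is governed by all the relative positions $2^jx-k$ simultaneously (essentially the dyadic expansion of $x$), a family that grows with $J$ rather than ranging over a $J$-independent compact set of "shapes," so the "finitely many configurations" argument that works within one level fails for the full vector. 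The two fallback strategies you name are exactly what is needed but are not carried out. The repair is short: use the paper's antisymmetric vector supported on the finest level alone, $\boldsymbol{U}(x)$ with entries $\psi_{J-1,i_x+1}(x)$ and $-\psi_{J-1,i_x}(x)$, which is exactly orthogonal to $\boldsymbol{\psi}_J(x)$, has $\|\boldsymbol{U}(x)\|^2\lesssim 2^J$, and satisfies $\langle\boldsymbol{U}(x),\boldsymbol{\dot{\psi}}_J(x)\rangle^2\gtrsim 2^{4J}$ whenever the Wronskian-type quantity $\psi(u-1)\psi'(u)-\psi(u)\psi'(u-1)$ is bounded away from zero at $u=2^{J-1}x-i_x$; by the optimality of your projection this yields $\|\boldsymbol{U}(x)\|^2_{\mathrm{proj}}\gtrsim 2^{3J}$ and hence \eqref{eq:quad2}. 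Note that this uniform non-vanishing of the Wronskian is also the (tersely justified) hinge of the paper's own proof, which asserts it from $\boldsymbol{R}\neq\boldsymbol{0}$; your write-up should state it as an explicit hypothesis on $\psi$ rather than leaving the multi-scale cancellation question open.
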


\begin{proof}
By Lemma \ref{lem:b2}, we see that $\|\boldsymbol{b}_{J,q}(x)\|^4\leq1$ for any $x\in[0,1]$. By (8) of Chapter X in \citep{deBoor} and Lemma \ref{lem:b2}, we know that
\begin{align*}
\|\boldsymbol{\dot{b}}_{J,q}(x)\|^2=\|\boldsymbol{Wb}_{J,q-1}(x)\|^2\leq\lambda_{\mathrm{max}}(\boldsymbol{W}^T\boldsymbol{W})\|\boldsymbol{b}_{J,q-1}(x)\|^2
\leq\|\boldsymbol{W}^T\boldsymbol{W}\|_{(2,2)}\leq\|\boldsymbol{W}^T\boldsymbol{W}\|_{(\infty,\infty)},
\end{align*}
with $\boldsymbol{W}$ given in \eqref{eq:matrix}. Note that $\boldsymbol{W}^T\boldsymbol{W}$ is $3$-banded where its $i$th diagonal entry is $2(q-1)^2/(t_i-t_{i+1-q})^2$ and its off-diagonals are of the form $-(q-1)^2(t_i-t_{i+1-q})^{-1}(t_{i+1}-t_{i+2-q})^{-1}$. Therefore by the quasi-uniformity of knots,
\begin{align*}
\|\boldsymbol{W}^T\boldsymbol{W}\|_{(\infty,\infty)}\lesssim\frac{1}{\min_{1\leq k\leq N}(t_k-t_{k-1})^2}\lesssim\frac{1}{\Delta_{\mathcal{T}}^2}\lesssim J^2.
\end{align*}
Substituting this bound back yields the stated upper bound.

For the lower bound, we need to construct a vector that is orthogonal to $\boldsymbol{b}_{J,q}(x)$, and we use a technique developed by \citep{localspline} for this purpose. Now for any $x\in[0,1]$. we can always find an index $i_x$ such that $x\in[t_{i_x-1},t_{i_x}]$. Define $\boldsymbol{U}(x)=(u_1(x),\dotsc,u_J(x))^T$ such that
\begin{align*}
u_j(x)=\begin{cases}
1-B_{i_x,q}(x),&j=i_x,\\
-B_{i_x,q}(x),&j=i_x+1,\dotsc,i_x+q-1,\\
0,&\text{otherwise}.
\end{cases}
\end{align*}
Using the fact that $0\leq B_{j,q}(x)\leq1$ for any $x$ and $j$, we have
\begin{align}\label{eq:u2}
\|\boldsymbol{U}(x)\|^2=(1-B_{i_x,q}(x))^2+(q-1)B_{i_x,q}(x)^2\leq q,
\end{align}
and we can take $\xi=q$. By the compact support of B-splines i.e., $B_{j,q}(x)=0$ for $j<i_x$ or $j>i_x+q-1$ and the partition of unity $\sum_jB_{j,q}=1$, we obtain
\begin{align}\label{eq:u0}
\langle\boldsymbol{U}(x),\boldsymbol{b}_{J,q}(x)\rangle&=\sum_{j=i_x}^{i_x+q-1}u_j(x)B_{j,q}(x)=(1-B_{i_x,q}(x))B_{i_x,q}(x)-B_{i_x,q}(x)\sum_{j=i_x+1}^{i_x+q-1}B_{j,q}(x)\nonumber\\
&=(1-B_{i_x,q}(x))B_{i_x,q}(x)-B_{i_x,q}(x)(1-B_{i_x,q}(x))=0.
\end{align}
Furthermore, using the derivative formula for B-splines as encoded in \eqref{eq:matrix} and the compact support of $B_{j-1,q}(x)$,
\begin{align}\label{eq:u1}
\langle\boldsymbol{U}(x),\boldsymbol{Wb}_{J,q-1}(x)\rangle&=(q-1)\sum_{j=i_x}^{i_x+q-2}\frac{u_{j+1}(x)-u_j(x)}{t_j-t_{j-q+1}}B_{j,q-1}(x)\nonumber\\
&=(q-1)\frac{-B_{i_x,q}(x)-(1-B_{i_x,q}(x))}{t_{i_x}-t_{i_x-q+1}}B_{i_x.q-1}(x)\nonumber\\
&=-(q-1)\frac{B_{i_x,q-1}(x)}{t_{i_x}-t_{i_x-q+1}}.
\end{align}
Now observe that $t_{i_x}-t_{i_x+q-1}\leq(q-1)\Delta_{\mathcal{T}}\lesssim(q-1)J^{-1}$ from the quasi-uniformity of the knots, and note that since $x\in[t_{i_x-1},t_{i_x}]$ and $B_{i_x,q-1}(x)$ is supported on $x\in[t_{i_x-q+1},t_{i_x}]$, it follows that $B_{i_x,q-1}(x)>0$ and hence $\min_{0\leq x\leq 1}B_{i_x,q-1}(x)^2$ is some positive constant not depending on $J$. Squaring both sides of \eqref{eq:u1} while keeping in mind of the auxiliary facts discussed, we see that $\langle\boldsymbol{U}(x),\boldsymbol{\dot{b}}_{J,q}(x)\rangle^2\gtrsim q^{-2}\min_{0\leq x\leq 1}B_{i_x,q-1}(x)^2J^2\lambda_{\mathrm{min}}(\boldsymbol{\Sigma})^2$ for any $x\in[0,1]$. The lower bound in the statement then follows from this bound and $\inf_{x\in[0,1]}\|\boldsymbol{b}_{J,q}(x)\|^4\geq q^{-2}$ by Lemma \ref{lem:b2}.

For the wavelets, we can without loss of generality assume that $\boldsymbol{R}\neq\boldsymbol{0}$ because if it were the zero matrix, then the statements of the lemma become vacuously true. Let us start with the upper bound. By Lemma \ref{lem:w2}, it holds that $\|\boldsymbol{\psi}_J(x)\|^4\lesssim2^{2J}$ for any $x\in[0,1]$. By the property of CDV wavelets, we know that $\psi^{'}$ is uniformly bounded, and since there are compactly supported,
\begin{align*}
\|\boldsymbol{\dot{\psi}}_J(x)\|^2=\sum_{j=N}^{J-1}\sum_{k=0}^{2^j-1}\left[2^{j/2}2^j\psi^{'}(2^jx-k)\right]^2\lesssim\sum_{j=N}^{J-1}2^{3j}\lesssim2^{3J}.
\end{align*}
Hence the $2^{5J}$-factor comes from multiplying these two bounds.

For the lower bound, first note that for a fixed $j$, $\psi_{j,k}(x)$ is nonzero for only a finite number of $k$'s due again to its compact support property. For the sake of subsequent argument, let us concretely enumerate these nonzero spatial shifts as $k=i_x,i_x+1,\dotsc,i_x+m-1$ for some $m>1$ following the notation for the B-splines case discussed previously. We then construct $\boldsymbol{U}(x)=(\boldsymbol{U}^T_N(x),\dotsc,\boldsymbol{U}^T_{J-1}(x))^T$ where the $k$th entry for the $j$th vector $\boldsymbol{U}_j(x)$ is
\begin{align*}
u_{j,k}(x)=\begin{cases}
\psi_{j,i_x+1}(x),&k=i_x,\\
-\psi_{j,i_x}(x),&k=i_x+1,\\
0,&\text{otherwise}.
\end{cases}
\end{align*}
Then its $\ell_2$-norm is
\begin{align*}
\|\boldsymbol{U}(x)\|^2=\sum_{j=N}^{J-1}\left[2^{j/2}\psi(2^jx-i_x-1)\right]^2+\sum_{j=N}^{J-1}\left[2^{j/2}\psi(2^jx-i_x)\right]^2\leq C_12^J,
\end{align*}
for some constant $C_1>0$. Hence we take $\xi=C_12^J$. Orthogonality can be checked by
\begin{align*}
\langle\boldsymbol{U}(x),\boldsymbol{\psi}_J(x)\rangle=\sum_{j=N}^{J-1}\sum_{k=0}^{2^j-1}u_{j,k}(x)\psi_{j,k}(x)=\sum_{j=N}^{J-1}\left[\psi_{j,i_x+1}(x)\psi_{j,i_x}(x)-\psi_{j,i_x}(x)\psi_{j,i_x+1}(x)\right]=0.
\end{align*}
Likewise, we have
\begin{align*}
\langle\boldsymbol{U}(x),\boldsymbol{\dot{\psi}}_J(x)\rangle&=\sum_{j=N}^{J-1}\sum_{k=0}^{2^j-1}u_{j,k}(x)\psi_{j,k}^{'}(x)=\sum_{j=N}^{J-1}\left[\psi_{j,i_x+1}(x)\psi_{j,i_x}^{'}(x)-\psi_{j,i_x}(x)\psi_{j,i_x+1}^{'}(x)\right]\\
&=\sum_{j=N}^{J-1}2^{2j}\underbrace{\left[\psi(2^jx-i_x-1)\psi^{'}(2^jx-i_x)-\psi(2^jx-i_x)\psi^{'}(2^jx-i_x-i)\right]}_{\text{call this $C_2$}}.
\end{align*}
Now since $\boldsymbol{R}\neq\boldsymbol{0}$ by assumption, it follows that $C_2\neq0$. Therefore,
\begin{align*}
\langle\boldsymbol{U}(x),\boldsymbol{\dot{\psi}}_J(x)\rangle^2=C_2^2\left(\sum_{j=N}^{J-1}2^{2j}\right)^2\geq C_2^2\sum_{j=N}^{J-1}2^{4j}\geq(C_2^2/16)2^{4J}.
\end{align*}
By Lemma \ref{lem:w2}, we have $\|\boldsymbol{\psi}_J(x)\|^4\gtrsim2^{2J}$. With everything now in place, the lower bound of Lemma \ref{lem:quad} takes the form of (up to some constant multiple) $\lambda_{\mathrm{min}}(\boldsymbol{M})\lambda_{\mathrm{min}}(\boldsymbol{\Sigma})^22^{2J}2^{4J}2^{-J}$ and this gives the lower bound for the present lemma.
\end{proof}

\begin{lemma}[$L_2$-norm of B-splines]\label{lem:b2}
For any $x\in[0,1]$ and any $1\leq J\leq n$,
\begin{align*}
q^{-1}\leq\|\boldsymbol{b}_{J,q}(x)\|^2\leq1.
\end{align*}
\end{lemma}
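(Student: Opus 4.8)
The plan is to expand the squared norm coordinatewise as $\|\boldsymbol{b}_{J,q}(x)\|^2=\sum_{j=1}^J B_{j,q}(x)^2$ and then exploit the three elementary B-spline facts already recorded in Section \ref{sec:bspline}: nonnegativity $0\le B_{j,q}(x)\le 1$, the partition of unity $\sum_{j=1}^J B_{j,q}(x)=1$, and the compact support property that at any fixed $x$ at most $q$ consecutive basis functions $B_{i_x,q}(x),\dotsc,B_{i_x+q-1,q}(x)$ are nonzero. Both bounds follow by combining these facts; no knot geometry or quasi-uniformity is needed, so the statement holds for any $1\le J\le n$ and uniformly in $x$.

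For the upper bound I would argue that since each $B_{j,q}(x)\in[0,1]$, we have $B_{j,q}(x)^2\le B_{j,q}(x)$ termwise, whence
\begin{align*}
\|\boldsymbol{b}_{J,q}(x)\|^2=\sum_{j=1}^J B_{j,q}(x)^2\le\sum_{j=1}^J B_{j,q}(x)=1,
\end{align*}
the last equality being the partition of unity. For the lower bound I would restrict the sum to the (at most $q$) indices at which $B_{j,q}(x)$ is nonzero, say $j=i_x,\dotsc,i_x+q-1$, and apply the Cauchy--Schwarz inequality to the vector of these values against the all-ones vector of length $q$:
\begin{align*}
1=\left(\sum_{j=i_x}^{i_x+q-1}B_{j,q}(x)\right)^2\le q\sum_{j=i_x}^{i_x+q-1}B_{j,q}(x)^2=q\,\|\boldsymbol{b}_{J,q}(x)\|^2,
\end{align*}
again using the partition of unity for the first equality. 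Rearranging gives $\|\boldsymbol{b}_{J,q}(x)\|^2\ge q^{-1}$. If strictly fewer than $q$ basis functions are active at $x$, the same Cauchy--Schwarz bound with a smaller count $k\le q$ yields the even stronger $\|\boldsymbol{b}_{J,q}(x)\|^2\ge k^{-1}\ge q^{-1}$, so the stated bound is uniform.

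There is essentially no hard step here; the only point requiring care is invoking the compact support property correctly, namely that exactly the $q$ consecutive B-splines $B_{i_x,q},\dotsc,B_{i_x+q-1,q}$ can be nonzero at a given $x$, so that the Cauchy--Schwarz constant is $q$ rather than $J$. This is what prevents the lower bound from degrading to the useless $J^{-1}$ and is precisely where the order-$q$ locality of the basis enters.
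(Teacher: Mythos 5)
Your proof is correct and follows essentially the same route as the paper: the upper bound via $B_{j,q}(x)^2\leq B_{j,q}(x)$ and partition of unity is identical, and your Cauchy--Schwarz step against the all-ones vector of length $q$ is exactly the paper's application of Jensen's inequality to the $q$ active B-splines, just stated in a different guise.
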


\begin{proof}
For any $x\in[0,1]$, we can always find a positive integer $i_x$ such that $x\in[t_{i_x-1},t_{i_x}]$. Using the compact support of B-splines $B_{j,q}(x)=0$ for $j<i_x$ or $j>i_x+q-1$, we can write
\begin{align*}
\|\boldsymbol{b}_{J,q}(x)\|^2=\sum_{j=1}^JB_{j,q}(x)^2=\sum_{j=i_x}^{i_x+q-1}B_{j,q}(x)^2\geq\frac{1}{q}\left(\sum_{j=i_x}^{i_x+q-1}B_{j,q}(x)\right)^2=\frac{1}{q},
\end{align*}
where we have used Jensen's inequality in the lower bound and the partition of unity property $\sum_jB_{j,q}(x)=1$ for any $x\in[0,1]$. For the upper bound, note that since $0\leq B_{j,q}(x)\leq1$ for all $j$, it holds
\begin{align*}
\|\boldsymbol{b}_{J,q}(x)\|^2=\sum_{j=1}^JB_{j,q}(x)^2\leq\sum_{j=1}^JB_{j,q}(x)=1,
\end{align*}
again using the B-spline partition of unity.
\end{proof}

\begin{lemma}[$L_2$-norm of wavelets]\label{lem:w2}
For any $x\in[0,1]$ and any $1\leq J\leq n$,
\begin{align*}
\|\boldsymbol{\psi}_J(x)\|\asymp2^{J/2}.
\end{align*}
\end{lemma}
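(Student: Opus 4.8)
The plan is to recognize that $\|\boldsymbol{\psi}_J(x)\|^2$ is exactly the diagonal of the orthogonal projection kernel onto the multiresolution space $V_J$, and then to read off both bounds from the reproduction of constants by that kernel. Indeed, since $\{\varphi_{N,k}\}_k\cup\{\psi_{j,k}:N\le j<J,\ 0\le k<2^j\}$ and $\{\varphi_{J,k}\}_k$ are two orthonormal bases of the same space $V_J=V_N\oplus\bigoplus_{j=N}^{J-1}W_j$, the diagonal of the reproducing kernel is independent of the chosen basis, so
\[
\|\boldsymbol{\psi}_J(x)\|^2=\sum_{k=0}^{2^N-1}\varphi_{N,k}(x)^2+\sum_{j=N}^{J-1}\sum_{k=0}^{2^j-1}\psi_{j,k}(x)^2=\sum_k\varphi_{J,k}(x)^2=K_J(x,x).
\]
This single identity collapses the two-sided estimate into bounding $K_J(x,x)$, and has the advantage of working with the scaling functions $\varphi_{J,k}$ (which reproduce constants) rather than the mean-zero wavelets $\psi_{j,k}$.

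For the upper bound I would use that the CDV scaling functions are uniformly bounded and compactly supported: for each fixed $x$ only a fixed number of indices $k$ (not depending on $J$ or $x$) give $\varphi_{J,k}(x)\neq0$, and each such term satisfies $\varphi_{J,k}(x)^2=2^J\varphi(2^Jx-k)^2\lesssim2^J$ because $\varphi$ is bounded (with the analogous bound for the $O(1)$ boundary-corrected functions). Summing these $O(1)$ nonzero contributions gives $K_J(x,x)\lesssim2^J$ uniformly in $x$.

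For the lower bound I would invoke the reproduction of constants, $\int_0^1K_J(x,y)\,dy=1$, which is available from the polynomial-reproduction property of $K_J$ cited in the text. Writing $1=\sum_k\varphi_{J,k}(x)\langle\varphi_{J,k},\mathbf{1}\rangle$ and observing that only the $O(1)$ indices $k$ with $\varphi_{J,k}(x)\neq0$ actually contribute, the Cauchy--Schwarz inequality yields
\[
1\le\Big(\sum_k\varphi_{J,k}(x)^2\Big)^{1/2}\Big(\sum_{k:\varphi_{J,k}(x)\neq0}\langle\varphi_{J,k},\mathbf{1}\rangle^2\Big)^{1/2}.
\]
Since each $\langle\varphi_{J,k},\mathbf{1}\rangle=\int\varphi_{J,k}$ is of order $2^{-J/2}$ (the interior functions integrate to exactly $2^{-J/2}$, and the boundary functions, being bounded on a support of length $O(2^{-J})$, integrate to $O(2^{-J/2})$), and since there are only $O(1)$ such terms, the second factor is $O(2^{-J/2})$. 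Rearranging gives $K_J(x,x)=\sum_k\varphi_{J,k}(x)^2\gtrsim2^J$, which together with the upper bound proves $\|\boldsymbol{\psi}_J(x)\|\asymp2^{J/2}$.

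The main obstacle I anticipate is the uniform treatment of the boundary-corrected CDV functions: one must confirm that the kernel-diagonal identity, the uniform boundedness, the $O(1)$ count of nonzero terms at each $x$, and the $O(2^{-J/2})$ size of the integrals $\langle\varphi_{J,k},\mathbf{1}\rangle$ all persist right up to the endpoints $0$ and $1$. This is precisely what the CDV boundary construction is engineered to guarantee, since it preserves orthonormality, compact support, and polynomial reproduction at the boundary; once these structural facts are quoted, all estimates are uniform in both $x\in[0,1]$ and $J$.
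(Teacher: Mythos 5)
Your proof is correct, and it takes a genuinely different route from the paper's. The paper works directly with the wavelet expansion: for the upper bound it uses $\sum_{j,k}\psi_{j,k}(x)^2\leq\max_{j,k}|\psi_{j,k}(x)|\cdot\sum_{j,k}|\psi_{j,k}(x)|$ together with compact support (only $O(1)$ active translates per level, each of size $O(2^{j/2})$) and the geometric sum over levels; for the lower bound it discards everything except a single top-level term, $\|\boldsymbol{\psi}_J(x)\|^2\geq\psi_{J-1,k_x}(x)^2=2^{J-1}\psi(2^{J-1}x-k_x)^2$, and asserts that $\psi(2^{J-1}x-k_x)$ is a constant strictly larger than $0$. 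Your argument instead passes through the basis-invariance of the projection-kernel diagonal, $\|\boldsymbol{\psi}_J(x)\|^2=\sum_k\varphi_{J,k}(x)^2=K_J(x,x)$, which is legitimate pointwise here since $\{\varphi_{N,k}\}\cup\{\psi_{j,k}:N\leq j<J\}$ and $\{\varphi_{J,k}\}$ are two orthonormal bases of the same $2^J$-dimensional space $V_J=V_N\oplus\bigoplus_{j=N}^{J-1}W_j$ and the CDV functions are continuous; you then get the lower bound from reproduction of constants, $1=\sum_k\varphi_{J,k}(x)\int_0^1\varphi_{J,k}(y)\,dy$, Cauchy--Schwarz over the $O(1)$ active indices, and $|\int_0^1\varphi_{J,k}|=O(2^{-J/2})$. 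What your route buys is a cleaner and more robust lower bound: the paper's pivotal claim about $\psi(2^{J-1}x-k_x)$ is delicate, since membership of $2^{J-1}x-k_x$ in the support of $\psi$ does not by itself give a positive value, let alone a lower bound uniform in $x$ (the mother wavelet has mean zero and vanishes at interior points of its support), whereas the scaling functions' constant-reproduction property yields a uniform-in-$x$ bound from purely structural facts of the CDV construction (orthonormality, compact support, polynomial reproduction up to the boundary), exactly the facts you flag at the end. What the paper's route buys is that it is elementary and level-by-level, never invoking the multiresolution identity or the kernel $K_J$. Your upper bound is essentially the paper's estimate transplanted from the $\psi_{j,k}$ to the $\varphi_{J,k}$, so the two proofs agree there.
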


\begin{proof}
For any $x\in[0,1]$, choose $k_x$ such that $2^{J-1}x-k_x$ is in the support of $\psi$, so that $\psi(2^{J-1}x-k_x)$ is a constant strictly larger than $0$. Then
\begin{align*}
\|\boldsymbol{\psi}_J(x)\|^2=\sum_{j=N}^{J-1}\sum_{k=0}^{2^j-1}\psi_{j,k}(x)^2\geq\sum_{k=0}^{2^{J-1}-1}\psi_{J-1,k}(x)^2\geq(0.5)2^J\psi(2^{J-1}x-k_x)^2\gtrsim2^J.
\end{align*}
For the upper bound, note that since $\psi$ is compact supported and bounded, we have $\|\sum_{k=0}^{2^j-1}\psi_{j,k}(\cdot)\|_\infty\lesssim2^{j/2}$ and $|\psi_{j,k}(x)|\lesssim2^{j/2}$ for any $x\in[0,1]$, therefore
\begin{align*}
\|\boldsymbol{\psi}_J(x)\|^2\leq\max_{N\leq j\leq J-1}\max_{0\leq k\leq2^j-1}|\psi_{j,k}(x)|\sum_{j=N}^{J-1}\sum_{k=0}^{2^j-1}\psi_{j,k}(x)\lesssim2^J.\qquad\qedhere
\end{align*}
\end{proof}

The following result relates the trace of matrix products to min and max eigenvalues of one of its product factor.
\begin{lemma}\label{lem:tr}
For any $J\times J$ non-negative definite matrices $\boldsymbol{A},\boldsymbol{B}$,
\begin{align*}
\lambda_{\mathrm{min}}(\boldsymbol{A})\mathrm{tr}(\boldsymbol{B})\leq\mathrm{tr}(\boldsymbol{AB})\leq\lambda_{\mathrm{max}}(\boldsymbol{A})\mathrm{tr}(\boldsymbol{B}).
\end{align*}
\end{lemma}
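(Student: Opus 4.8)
The plan is to reduce both inequalities to one elementary fact about quadratic forms: for a non-negative definite (hence symmetric) matrix $\boldsymbol{A}$, every vector $\boldsymbol{v}$ satisfies $\lambda_{\mathrm{min}}(\boldsymbol{A})\|\boldsymbol{v}\|^2\le\boldsymbol{v}^T\boldsymbol{A}\boldsymbol{v}\le\lambda_{\mathrm{max}}(\boldsymbol{A})\|\boldsymbol{v}\|^2$. The device that lets me apply this is the spectral decomposition of $\boldsymbol{B}$, which is available precisely because a non-negative definite matrix is symmetric and therefore orthogonally diagonalizable with non-negative eigenvalues.

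First I would write $\boldsymbol{B}=\sum_{i=1}^J\mu_i\boldsymbol{v}_i\boldsymbol{v}_i^T$, where $\mu_1,\dotsc,\mu_J\ge0$ are the eigenvalues of $\boldsymbol{B}$ (their non-negativity is exactly the hypothesis that $\boldsymbol{B}$ is non-negative definite) and $\boldsymbol{v}_1,\dotsc,\boldsymbol{v}_J$ are an associated orthonormal basis of eigenvectors. Next, by linearity and the cyclic invariance of the trace, $\mathrm{tr}(\boldsymbol{AB})=\sum_{i=1}^J\mu_i\,\mathrm{tr}(\boldsymbol{A}\boldsymbol{v}_i\boldsymbol{v}_i^T)=\sum_{i=1}^J\mu_i\,\boldsymbol{v}_i^T\boldsymbol{A}\boldsymbol{v}_i$. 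Applying the quadratic-form bound to each summand, and using $\|\boldsymbol{v}_i\|=1$, gives $\lambda_{\mathrm{min}}(\boldsymbol{A})\mu_i\le\mu_i\boldsymbol{v}_i^T\boldsymbol{A}\boldsymbol{v}_i\le\lambda_{\mathrm{max}}(\boldsymbol{A})\mu_i$ for each $i$. Summing over $i$ and recognizing $\sum_i\mu_i=\mathrm{tr}(\boldsymbol{B})$ yields both stated inequalities at once.

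The proof is essentially obstacle-free; the only point demanding care is a sign issue: multiplying the per-eigenvector bounds by $\mu_i$ preserves the direction of the inequalities only because each $\mu_i\ge0$. Thus the non-negative definiteness of $\boldsymbol{B}$ (not merely of $\boldsymbol{A}$) is genuinely used, and indeed the bounds would fail if $\boldsymbol{B}$ had a negative eigenvalue. Should one prefer to avoid diagonalization, an equivalent route is to introduce the symmetric square root $\boldsymbol{B}^{1/2}$, write $\mathrm{tr}(\boldsymbol{AB})=\mathrm{tr}(\boldsymbol{B}^{1/2}\boldsymbol{A}\boldsymbol{B}^{1/2})$ by cyclicity, note that $\boldsymbol{B}^{1/2}\boldsymbol{A}\boldsymbol{B}^{1/2}-\lambda_{\mathrm{min}}(\boldsymbol{A})\boldsymbol{B}$ and $\lambda_{\mathrm{max}}(\boldsymbol{A})\boldsymbol{B}-\boldsymbol{B}^{1/2}\boldsymbol{A}\boldsymbol{B}^{1/2}$ are both non-negative definite, and take traces using the fact that the trace of a non-negative definite matrix is non-negative.
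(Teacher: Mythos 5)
Your proof is correct and is essentially the paper's argument with the roles of the two matrices exchanged: the paper eigen-decomposes $\boldsymbol{A}=\boldsymbol{P\Lambda P}^T$ and uses the non-negativity of the diagonal entries of $\boldsymbol{P}^T\boldsymbol{BP}$, while you decompose $\boldsymbol{B}=\sum_i\mu_i\boldsymbol{v}_i\boldsymbol{v}_i^T$ and use the Rayleigh-quotient bound $\lambda_{\mathrm{min}}(\boldsymbol{A})\leq\boldsymbol{v}_i^T\boldsymbol{A}\boldsymbol{v}_i\leq\lambda_{\mathrm{max}}(\boldsymbol{A})$ together with $\mu_i\geq0$ --- the same combination of spectral decomposition, trace cyclicity, and positive definiteness of the second factor. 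Your closing square-root variant via $\mathrm{tr}(\boldsymbol{B}^{1/2}\boldsymbol{A}\boldsymbol{B}^{1/2})$ is also valid, and your remark that non-negative definiteness of $\boldsymbol{B}$ is genuinely needed is well taken.
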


\begin{proof}
By eigen-decomposition, $\boldsymbol{A}=\boldsymbol{P\Lambda P}^T$ for some orthonormal matrix $\boldsymbol{P}$ and $\boldsymbol{\Lambda}=\mathrm{Diag}\{\lambda_1(\boldsymbol{A}),\dotsc,\lambda_J(\boldsymbol{A})\}$ are the eigenvalues. Using the cyclic permutation of the trace operator, we have
\begin{align*}
\mathrm{tr}(\boldsymbol{AB})=\mathrm{tr}(\boldsymbol{P\Lambda P}^T\boldsymbol{B})=\mathrm{tr}(\boldsymbol{\Lambda P}^T\boldsymbol{BP})=\sum_{i=1}^J\lambda_i(\boldsymbol{A})(\boldsymbol{P}^T\boldsymbol{BP})_{ii}.
\end{align*}
Note that $(\boldsymbol{P}^T\boldsymbol{BP})_{ii}\geq0$ since $\boldsymbol{B}$ is non-negative definite. Therefore, the right hand side above is bounded above by $\lambda_{\mathrm{max}}(\boldsymbol{A})\mathrm{tr}(\boldsymbol{P}^T\boldsymbol{BP})$ and below by $\lambda_{\mathrm{min}}(\boldsymbol{A})\mathrm{tr}(\boldsymbol{P}^T\boldsymbol{BP})$. The result follows by invoking the cyclic permutation of the trace operator again and the fact $\boldsymbol{PP}^T=\boldsymbol{I}$ to conclude that $\mathrm{tr}(\boldsymbol{P}^T\boldsymbol{BP})=\mathrm{tr}(\boldsymbol{BPP}^T)=\mathrm{tr}(\boldsymbol{B})$.
\end{proof}

\begin{lemma}\label{lem:BBinfty}
Let $\boldsymbol{H}$ be a $J\times J$ symmetric, positive definite and $w$-banded such that $h_{ij}=0$ if $|i-j|>w$. Assume that the eigenvalues of $\boldsymbol{H}$ are contained in $[a\tau_m,b\tau_m]$ for some fixed $0<a<b<\infty$ and some sequence $\tau_m$. Furthermore, let the function $g$ be analytic on $[a\tau_m,b\tau_m]$. Then,
\begin{align*}
\|g(\tau_m^{-1}\boldsymbol{H})\|_{(\infty,\infty)}=O(1).
\end{align*}
\end{lemma}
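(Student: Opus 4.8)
The plan is to reduce the claim to a statement about exponential decay of the entries of $g(\tilde{\boldsymbol H})$ away from the diagonal, where $\tilde{\boldsymbol H}:=\tau_m^{-1}\boldsymbol H$ is symmetric, $w$-banded, and has spectrum contained in the fixed interval $[a,b]$ (independent of $m$ and $J$). Once I show that $|g(\tilde{\boldsymbol H})_{ij}|\le C\rho^{|i-j|}$ for constants $C>0$ and $0<\rho<1$ that do not depend on $J$ or $m$, the max absolute row sum is controlled by a geometric series: $\|g(\tilde{\boldsymbol H})\|_{(\infty,\infty)}=\max_i\sum_j|g(\tilde{\boldsymbol H})_{ij}|\le C\sum_{k=-\infty}^{\infty}\rho^{|k|}=C\,(1+\rho)/(1-\rho)=O(1)$, uniformly in $J$. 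Hence the entire difficulty is concentrated in the decay estimate.

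To obtain the decay I would combine polynomial approximation with the bandedness of matrix powers. First observe that if $\tilde{\boldsymbol H}$ is $w$-banded then $\tilde{\boldsymbol H}^\ell$ is $\ell w$-banded (by induction on the product formula $(\tilde{\boldsymbol H}^\ell)_{ij}=\sum_k(\tilde{\boldsymbol H}^{\ell-1})_{ik}\tilde h_{kj}$), so any polynomial $p_k$ of degree $k$ gives a $kw$-banded matrix $p_k(\tilde{\boldsymbol H})$; in particular $(p_k(\tilde{\boldsymbol H}))_{ij}=0$ whenever $|i-j|>kw$. Next, since $g$ is analytic on a neighbourhood of the fixed compact interval $[a,b]$, Bernstein's theorem supplies polynomials $p_k$ of degree $k$ with $\sup_{x\in[a,b]}|g(x)-p_k(x)|\le Cq^k$ for some fixed $0<q<1$ determined by the Bernstein ellipse of analyticity, which is independent of $m$ and $J$ because $[a,b]$ is fixed. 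Because $\tilde{\boldsymbol H}$ is symmetric with spectrum in $[a,b]$, the spectral calculus gives $\|g(\tilde{\boldsymbol H})-p_k(\tilde{\boldsymbol H})\|_{(2,2)}=\max_{\lambda\in\mathrm{spec}(\tilde{\boldsymbol H})}|g(\lambda)-p_k(\lambda)|\le\sup_{x\in[a,b]}|g(x)-p_k(x)|\le Cq^k$, and since every entry is dominated by the spectral norm, $|M_{ij}|=|\boldsymbol e_i^T M\boldsymbol e_j|\le\|M\|_{(2,2)}$, I obtain for every $k$ with $kw<|i-j|$ that $|g(\tilde{\boldsymbol H})_{ij}|=|(g(\tilde{\boldsymbol H})-p_k(\tilde{\boldsymbol H}))_{ij}|\le Cq^k$. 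Taking $k$ to be the largest integer with $kw<|i-j|$ yields $|g(\tilde{\boldsymbol H})_{ij}|\le Cq^{|i-j|/w-1}=C'\rho^{|i-j|}$ with $\rho:=q^{1/w}\in(0,1)$, exactly the decay required; the finitely many near-diagonal entries with $|i-j|\le w$ are trivially bounded by $\max_{x\in[a,b]}|g(x)|$ and absorbed into $C'$.

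The hard part will be pinning down the uniformity of the constants $C'$ and $\rho$ across $m$ and $J$, since $g$ is only assumed analytic on the $m$-dependent set $[a\tau_m,b\tau_m]$ while the approximation must be carried out on the rescaled spectrum $[a,b]$. The point is that after rescaling by $\tau_m^{-1}$ we only ever evaluate $g$ on the fixed interval $[a,b]$ (for example $g(x)=x^{-1}$ is analytic on any $[a,b]\subset(0,\infty)$), so the region of analyticity and hence $q$ are fixed, while the dimension $J$ enters only through the length of the geometric series, which converges regardless. Assembling the geometric-series bound then gives $\|g(\tau_m^{-1}\boldsymbol H)\|_{(\infty,\infty)}=O(1)$. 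Should one prefer to cite rather than re-derive the decay, an alternative is to invoke the Demko--Moss--Smith exponential-decay bounds for inverses of band matrices in the case $g(x)=x^{-1}$, together with their extension to general analytic $g$ (Benzi--Golub), which furnish the same entrywise estimate with constants depending only on $a$, $b$, $w$, and $g$.
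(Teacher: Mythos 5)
Your proposal is correct and follows essentially the same route as the paper's proof: polynomials of a $w$-banded matrix are banded, Bernstein's theorem gives geometric-rate polynomial approximation of $g$ on the fixed rescaled spectrum $[a,b]$, entries are dominated by the spectral norm of the approximation error, and the resulting exponential off-diagonal decay $|g(\tilde{\boldsymbol{H}})_{ij}|\lesssim\delta^{|i-j|/w}$ is summed as a geometric series uniformly in $J$. Your extra remarks on uniformity of the constants after rescaling by $\tau_m^{-1}$ and the Demko--Moss--Smith/Benzi--Golub alternative are sound but do not change the argument.
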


\begin{proof}
First observe that if $\boldsymbol{U}$ is $a$-banded and $\boldsymbol{V}$ is $b$-banded, then $\boldsymbol{UV}$ is $a+b$-banded. Indeed, $(\boldsymbol{UV})_{i,j}=\sum_{k=1}^Ju_{ik}v_{kj}\neq0$ when at least one term in the sum is not zero. Therefore, both $u_{ik}$ and $v_{kj}$ are nonzero for some $k$. By bandedness of the matrices involved, it must be that $|i-k|\leq a$ and $|j-k|\leq b$, and thus by the triangle inequality $|i-j|\leq a+b$, implying that $\boldsymbol{UV}$ is $a+b$-banded. By an induction argument, we deduce that $\boldsymbol{U}^n$ is $na$-banded.

Because we can always scale $\boldsymbol{H}$ by $\tau_m^{-1}$ such that its eigenvalues are in $[a,b]$, we let $\tau_m=1$ without loss of generality. Let $p_n$ be a polynomial function of degree $n$, and by the previous argument, we know that $p_n(\boldsymbol{H})$ is $nw$-banded. We will use a result by Bernstein (see Theorem 73 of \citep{bernstein}) on the best polynomial approximation to analytic functions. For any $f$ analytic on $[a,b]$, there exist constants $C_0>0,\delta<1$ such that
\begin{align}
\inf_{p\in\mathcal{P}}\|f-p\|_\infty\leq C_0\delta^{n+1},
\end{align}
where $\mathcal{P}$ is the space of polynomials of degree $n$ (or order $n+1$). Here $C_0$ and $\delta$ depend on half-axes of certain ellipse containing $[a,b]$ and also the supremum of $f$ on this ellipse. Now by spectral theory and the result above,
\begin{align*}
\|g(\boldsymbol{H})-p_n(\boldsymbol{H})\|_{(2,2)}=\sup_{x\in\Lambda(\boldsymbol{H})}|g(x)-p_n(x)|\leq C_0\delta^{n+1},
\end{align*}
where $\Lambda(\boldsymbol{H})$ is the set of eigenvalues of $\boldsymbol{H}$. As mentioned before, $p_n(\boldsymbol{H})_{i,j}=0$ for $|i-j|>nw$. Now suppose $i\neq j$ and choose $n$ such that $nw<|i-j|\leq(n+1)w$, then
\begin{align*}
|g(\boldsymbol{H})_{i,j}|=|g(\boldsymbol{H})_{i,j}-p_n(\boldsymbol{H})_{i,j}|\leq\|g(\boldsymbol{H})-p_n(\boldsymbol{H})\|_{(2,2)}\leq C_0\delta^{|i-j|/w},
\end{align*}
since $\delta<1$. For the case $i=j$, we have $g(\boldsymbol{H})_{ii}\leq\|g(\boldsymbol{H})\|_{(2,2)}$. Combining both cases, we conclude $|g(\boldsymbol{H})_{i,j}|\leq\max\{C_0,\|g(\boldsymbol{H})\|_{(2,2)}\}\delta^{|i-j|/w}$. Again recalling that $\delta<1$,
\begin{align*}
\|g(\boldsymbol{H})\|_{(\infty,\infty)}\leq\max\{C_0,\|g(\boldsymbol{H})\|_{(2,2)}\}\max_{1\leq i\leq J}\sum_{j=1}^J\delta^{|i-j|/w}\lesssim1+2\sum_{j=1}^\infty\delta^{j/w}<\infty.\qquad\qedhere
\end{align*}
\end{proof}

\bibliographystyle{apa}
\bibliography{bayeslepski}
\end{document}